\newcommand{\ii}{\operatorname{i}}
\newcommand{\ee}{\operatorname{e}}
\newcommand{\eps}{\varepsilon}
\newcommand{\dd}{\mathrm{d}}
\newcommand{\Comma}{\: ,}
\newcommand{\period}{\: .}
\newcommand{\semicolon}{\: ;}
\newcommand{\Ai}{\operatorname{Ai}}
\newcommand{\Bi}{\operatorname{Bi}}
\newcommand{\MATLAB}{\textsc{Matlab}\xspace}
\newcommand{\anton}{\textcolor{black}}
\let\oldfrac\frac
\renewcommand{\frac}[2]{%
	\mathchoice
	{\oldfrac{#1}{#2}}
	{\oldfrac{#1}{#2}} 
	{\oldfrac{#1}{#2}} 
	{#1/#2}
}
\newtheorem{Theorem}{Theorem}[section]
\newtheorem{Remark}[Theorem]{Remark}
\newtheorem{Lemma}[Theorem]{Lemma}
\newtheorem{Proposition}[Theorem]{Proposition}
\newtheorem{Definition}[Theorem]{Definition}
\newtheorem{Hypothesis}{Hypothesis}
\newcommand{\footremember}[2]{%
\footnote{#2}
\newcounter{#1}
\setcounter{#1}{\value{footnote}}%
}
\numberwithin{equation}{section}
\newcounter{bla}
\date{\vspace{-2em}}
\begin{document}



\title{\vspace{-2.5cm}WKB-based third order method for the highly oscillatory 1D stationary Schrödinger equation}


 \author{
    Anton Arnold\footremember{label1}{Corresponding author.}\textsuperscript{,}\footremember{alley}{Institute of Analysis and Scientific              Computing, Technische Universität Wien, Wiedner Hauptstr. 8-10, 1040 Wien, Austria,
    \href{mailto:anton.arnold@tuwien.ac.at}{anton.arnold@tuwien.ac.at}} 
    \and
    Jannis Körner\footremember{trailer}{Institute of Analysis and Scientific              Computing, Technische Universität Wien, Wiedner Hauptstr. 8-10, 1040 Wien, Austria,
    \href{mailto:jannis.koerner@tuwien.ac.at}{jannis.koerner@tuwien.ac.at}}
}

\maketitle

%
\begin{abstract}
    This paper introduces an efficient high-order numerical method for solving the 1D stationary Schrödinger equation in the highly oscillatory regime. Building upon the ideas from \anton{the article [Arnold, Ben Abdallah, Negulescu, SIAM J.\ Numer.\ Anal., 2011],} 
    we first analytically transform the given equation into a smoother (i.e.\ less oscillatory) equation. By developing sufficiently accurate quadratures for several (iterated) oscillatory integrals occurring in the Picard approximation of the solution, we obtain a one-step method that is third order w.r.t.\ the step size. The accuracy and efficiency of the method are illustrated through several numerical examples.
\end{abstract}



%
\noindent\textbf{Key words:} Schrödinger equation, highly oscillatory wave functions, higher order WKB approximation, initial value problem

\noindent\textbf{AMS subject classifications:} 34E20, 81Q20, 65L11, 65M70



%
\section{Introduction}
This work is concerned with the numerical treatment of the highly oscillatory 1D Schrödinger equation
\begin{align} \label{schroedinger}
	\varepsilon^{2}\varphi^{\prime\prime}(x) + a(x) \varphi(x) = 0 \Comma\quad x\in\mathbb{R} \Comma
\end{align}
where the parameter $0<\varepsilon\ll 1$ is assumed to be very small. Further, we assume the real-valued coefficient function $a$ to be sufficiently smooth and bounded away from zero, i.e. $a(x)\geq a_{0}>0$. The Schrödinger equation (\ref{schroedinger}) plays an important role within the context of quantum mechanical problems, e.g., for the simulation of electron transport in semiconductor devices \cite{Mennemann2013TransientSS,
Neg2008,Sun1998ResonantTD}. In these applications, $\varphi$ represents the (possibly complex-valued) Schrödinger wave function, and $a(x):=E-V(x)$ is associated with a prescribed electrostatic potential $V$, where $E\in\mathbb{R}$ denotes the injection energy of an electron of mass $m$. The small parameter $\varepsilon:=\frac{\hbar}{\sqrt{2m}}$ is then proportional to the (reduced) Planck constant $\hbar$. We note that there are numerous additional applications of equation (\ref{schroedinger}), including plasma physics \cite{Courant1958TheoryOT,Lewis1968MOTIONOA} and cosmology \cite{Martin2003WKBAF,Winitzki2005CosmologicalPP}. 

Since the (local) wave length of a solution $\varphi$ to (\ref{schroedinger}) is given by $\lambda(x)=(2\pi\varepsilon)/\sqrt{a(x)}$, the solution exhibits high oscillations for small values of $\varepsilon$, particularly in the semi-classical limit $\varepsilon \to 0$. Therefore, standard methods for solving (\ref{schroedinger}) are typically constrained by choosing very small grid sizes $h=\mathcal{O}(\eps)$ (e.g., see \cite{IHLENBURG19959}) in order to obtain reasonably accurate numerical solutions. However, this leads to high inefficiencies for small values of $\eps$. Hence, there is a keen interest in numerical methods that allow this grid size limitation to be reduced or even eliminated entirely.

In \cite{Jahnke2003NumericalIF,Lorenz2005AdiabaticIF} the authors proposed efficient and \textit{uniformly accurate} (w.r.t.\ $\eps$) schemes, which allow to reduce the grid size limitation to $h=\mathcal{O}(\sqrt{\eps})$ while yielding global errors of order $\mathcal{O}(h^{2})$. The same grid limitation is achieved with the WKB-based (named after the physicists Wentzel, Kramers, Brillouin; cf.\ \cite{Landau1985Q}) second order (w.r.t.\ the step size $h$) one-step scheme from \cite{Arnold2011WKBBasedSF}. This method relies on an analytical pre-processing of the ODE (\ref{schroedinger}), utilizing a priori information of the solution by considering a second order (w.r.t. $\eps$) WKB approximation. \anton{We remark that this approach only uses the first three WKB-terms, but \emph{not} the whole asymptotic WKB-series, which is typically divergent, see e.g.\ \cite{arnold2024optimally}. By eliminating the dominant oscillations, this procedure transforms \eqref{schroedinger} into an equivalent equation that has either a smooth solution or (at least) oscillations with a rapidly decreasing amplitude as $\eps\to0$. As such, it is strategically similar to the more recent \emph{phase function method} of Bremer \cite{Bremer2018}. Such analytical pre-processing enables the computation of a numerical solution on a coarse grid. 
In fact, the \emph{WKB-marching method}} from \cite{Arnold2011WKBBasedSF} is sometimes even \textit{asymptotically correct}, meaning the numerical error goes to zero as $\eps$ approaches zero. This holds true under the condition that the integrals $\int^{x}\sqrt{a(\tau)}\,\dd\tau$ and $\int^{x}a(\tau)^{-1/4}(a(\tau)^{-1/4})''\,\dd\tau$ for the phase of the solution can be computed exactly. The method then yields numerical errors which are $\mathcal{O}(\eps^{3})$ as $\eps\to 0$.

In \cite{Krner2021WKBbasedSW}, an adaptive step size control and a switching mechanism were added on top of the second order scheme (w.r.t.\ the step size) from \cite{Arnold2011WKBBasedSF}. The switching mechanism allowed the algorithm to switch to a standard ODE method (e.g., Runge-Kutta) during the computation and was implemented to avoid technical or efficiency issues in regions where the coefficient function $a(x)$ is very small or even equal to zero. \anton{\cite{Krner2021WKBbasedSW} was inspired by and compared to the earlier work \cite{Agocs2020}, which uses in the WKB-regime a first order method w.r.t.\ $h$.  }

\anton{
The last few years brought about even more refined numerical methods and algorithms for highly oscillatory problems: In \cite{Bremer2023} and \cite{SeBr2024}, Bremer's phase function method was extended to deal with turning points and to inhomogeneous linear ODEs, respectively. In \cite{Agocs2024}, Agocs \& Barnett developed a coupled algorithm with adaptive step sizes for mixed problems, which may include highly oscillatory and slowly changing regions. In the former regime, the method is based on solving a Riccati equation for a smooth phase function using a defect correction procedure. The article \cite{Agocs2024} also includes a detailed numerical comparison with \cite{Krner2021WKBbasedSW} and \cite{Bremer2018}. Further approaches for highly oscillatory ODEs also include a GMRES-based approach \cite{Olver-GMRES2011}. }

\anton{Next we briefly compare the different focuses in the corresponding literature: The articles \cite{Agocs2024, Bremer2018} have quite algorithmic motivation, providing many practical hints and polished open source software packages. By contrast, this work (as well as \cite{Jahnke2003NumericalIF,Lorenz2005AdiabaticIF,Arnold2011WKBBasedSF}) takes a rather numerical analysis point of view and provides a rigorous error analysis, thus giving theoretical guarantees that other methods may not. 
}

\bigskip

The goal of this work is to improve the method from \cite{Arnold2011WKBBasedSF} by extending it to a third order scheme (w.r.t.\ the step size) \anton{ and to compare its performance to the second order WKB-marching method from \cite{Arnold2011WKBBasedSF} as well as to the \emph{adaptive Riccati defect correction method} from \cite{Agocs2024}. The higher-order WKB-marching method will maintain } the same analytical pre-processing of the given ODE (\ref{schroedinger}) as in \cite{Arnold2011WKBBasedSF}. The derivation of the third order scheme can then be realized mainly by two steps: Firstly, by keeping an additional term of the Picard approximation of the solution; and secondly, by a special treatment of the (iterated) oscillatory integrals which occur in each term of this Picard approximation. While the first step is straightforward, the second step is highly technical and consists of rather extensive computations in order to obtain sufficiently accurate quadratures for the oscillatory integrals. Here, the main strategies employed to achieve the desired accuracy are the same as in \cite{Arnold2011WKBBasedSF}.

This paper is organized as follows: In Section~\ref{chap:WKB-trafo} we provide a short review of the WKB-based transformation from \cite{Arnold2011WKBBasedSF}, which allows transforming (\ref{schroedinger}) into a less oscillatory problem. Section~\ref{chap:construction_scheme} details the construction of sufficiently accurate approximations for the (iterated) oscillatory integrals that appear in the Picard approximation of the solution. Section~\ref{chap:scheme} contains the definition as well as the error analysis of the numerical scheme, with the main result of this paper stated in Theorem \ref{thm_main}. In Section~\ref{chap:simulations} we present numerical simulations and illustrate the theoretical results established in this paper. 
\anton{In Section~\ref{sec:comparison} we shall attempt an efficiency comparison of our numerical scheme with the program {\tt riccati} from \cite{Agocs2024,Agocs2023} --- in spite of its conceptual differences. In two examples we shall see that the WKB-marching method has slight advantages w.r.t. accuracy and/or runtime for very small values of $\varepsilon$.} 
We conclude in Section~\ref{chap:conclusion}.

%
\section{Review of the WKB-based transformation from \texorpdfstring{\cite{Arnold2011WKBBasedSF}}{[2]}}\label{chap:WKB-trafo}

In this section we shall review the basics of the second order (w.r.t.\ $\eps$) WKB-based transformation from \cite{Arnold2011WKBBasedSF}, which was used there to transform the Schrödinger equation (\ref{schroedinger}) into a smoother (i.e.\ less oscillatory) equation. Based on this analytical pre-processing of (\ref{schroedinger}), the authors developed a second order (w.r.t.\ the step size $h$) numerical scheme that is uniformly correct in $\eps$ and sometimes even asymptotically correct, i.e.\ the numerical error goes to zero with $\varepsilon\to 0$, while the grid size $h$ remains constant.

Our aim here is to solve the following initial value problem (IVP) for the Schrödinger equation (\ref{schroedinger}) on the spatial interval $I:=[x_{0}, x_{end}]$:
\begin{align} \label{schroedinger_IVP}
 \begin{cases}
		\eps^{2}\varphi^{\prime\prime}(x)+a(x)\varphi(x) = 0 \Comma \quad x\in I=[x_{0},x_{end}]\Comma\\
		\varphi(x_{0})=\varphi_{0} \Comma\\
        \varepsilon\varphi^{\prime}(x_{0})=\varphi_{1}\period
	\end{cases}
\end{align}
%
Before we explain the reformulation of this IVP, we introduce the following assumption.
\begin{Hypothesis}\label{hypothesis_A}
	Let $a\in C^{7}(I)$ be a fixed (real-valued) function, which satisfies $a(x)\geq a_{0} > 0$ for all $x\in I$. Further, let $0<\eps\leq \eps_{0}$ for some sufficiently small $\eps_{0}$.
\end{Hypothesis}
The basis of the transformation from \cite{Arnold2011WKBBasedSF} relies on the well-known WKB-ansatz (cf.\ \cite{Landau1985Q}) for the ODE (\ref{schroedinger}):
\begin{align} \label{wkb_ansatz}
	\varphi(x)\sim \exp\left(\frac{1}{\varepsilon}\sum_{p=0}^{\infty}\varepsilon^{p}\phi_{p}(x)\right)\Comma\quad \varepsilon\to 0\semicolon\quad \phi_{p}(x)\in\mathbb{C}\period
\end{align}
Here, the series in the exponential function is asymptotic w.r.t.\ $\varepsilon$ and typically divergent\footnote{See \cite{arnold2024optimally} for a class of coefficient functions $a$, which lead to convergent WKB series.}. To derive approximate solutions, it is therefore necessary to truncate the series after some finite order. By substituting the WKB-ansatz
into equation (\ref{schroedinger}), a comparison of $\varepsilon$-powers leads to the first three functions $\phi_{p}(x)$ as
\begin{align}
	\phi_{0}(x)&= \pm \ii \int_{x_{0}}^{x} \sqrt{a(y)} \, \mathrm{d}y \Comma \label{wkb_basis0}\\
	\phi_{1}(x)&= \ln(a(x)^{-\frac{1}{4}})-\ln(a(x_{0})^{-\frac{1}{4}}) \Comma \label{wkb_basis1}\\
	\phi_{2}(x)&= \mp \ii \int_{x_{0}}^{x} b(y) \, \mathrm{d}y \Comma \quad b(x):= -\frac{1}{2a(x)^{\frac{1}{4}}}\left(a(x)^{-\frac{1}{4}}\right)^{\prime\prime} \period \label{wkb_basis2}
\end{align}
Here, the two different signs $\pm$ and $\mp$ in (\ref{wkb_basis0}) and (\ref{wkb_basis2}) imply that there is always a pair of approximate solutions to the Schrödinger equation (\ref{schroedinger}). 
This corresponds to the fact that the second order ODE (\ref{schroedinger}) has two fundamental solutions. By truncating the series in (\ref{wkb_ansatz}) after $p = 2$, one obtains the second order (w.r.t.\ $\varepsilon$) asymptotic WKB approximation of $\varphi(x)$ as
\begin{align} \label{2ndorderwkb}
	\varphi_{2}^{WKB}(x) = \alpha\frac{\exp\left(\frac{\ii}{\varepsilon} \phi^{\varepsilon}(x)\right)}{a(x)^{\frac{1}{4}}} + \beta\frac{\exp\left(-\frac{\ii}{\varepsilon}\phi^{\varepsilon}(x)\right)}{a(x)^{\frac{1}{4}}} \Comma
\end{align}
with constants $\alpha,\beta\in\mathbb{C}$. The phase is
\begin{align} \label{phase}
	\phi^{\varepsilon}(x):= \int_{x_{0}}^{x}\left(\sqrt{a(y)}-\varepsilon^{2}b(y)\right) \, \mathrm{d}y \period
\end{align}
Note that Hypothesis~\ref{hypothesis_A} implies that $\phi^{\eps}$ is bounded on $I$. The WKB-based transformation now consists of two steps: First, using the notation
\begin{align} \label{Utrafo}
	U(x)=\begin{pmatrix}u_{1}(x)\\u_{2}(x)\end{pmatrix}:=\begin{pmatrix}a(x)^{\frac{1}{4}}\varphi(x)\\\frac{\varepsilon\left(a(x)^{\frac{1}{4}}\varphi(x)\right)^{\prime}}{\sqrt{a(x)}}\end{pmatrix} \Comma
\end{align}
the second order IVP (\ref{schroedinger_IVP}) can be reformulated as a system of first order differential equations:
\begin{align} \label{Usystem}
	\begin{cases}
		U^{\prime}(x) = \left[\frac{1}{\varepsilon}\mathbf{A}_{0}(x)+\varepsilon \mathbf{A}_{1}(x)\right]U(x) \Comma \quad x\in I=[x_{0},x_{end}]\Comma\\
		U(x_{0})=U_{0} \Comma
	\end{cases}
\end{align}
where the two matrices $\mathbf{A}_{0}$ and $\mathbf{A}_{1}$ are given by
\begin{align}
	\mathbf{A}_{0}(x):=\sqrt{a(x)}\begin{pmatrix}0 & 1 \\ -1 & 0\end{pmatrix}\semicolon\quad \mathbf{A}_{1}(x):=\begin{pmatrix}0 & 0 \\ 2b(x) & 0\end{pmatrix} \nonumber \period
\end{align}
Second, the first order system (\ref{Usystem}) is transformed by an additional change of variables
\begin{align} \label{Z-trafo}
	Z(x)=\begin{pmatrix}z_{1}(x) \\ z_{2}(x)\end{pmatrix}:=\exp\left(-\frac{\ii}{\varepsilon}\mathbf{\Phi}^{\varepsilon}(x)\right)\mathbf{P}U(x) \Comma
\end{align}
with the two matrices
\begin{align}
	\mathbf{P}:=\frac{1}{\sqrt{2}}\begin{pmatrix}\ii & 1 \\ 1 & \ii\end{pmatrix}\semicolon \quad \mathbf{\Phi}^{\varepsilon}(x):=\begin{pmatrix}\phi^{\varepsilon}(x) & 0 \\ 0 & -\phi^{\varepsilon}(x) \end{pmatrix} \nonumber \period
\end{align}
Here, the function $\phi^{\varepsilon}$ is the phase of the second order WKB approximation (\ref{2ndorderwkb}) as defined in (\ref{phase}).
The resulting system for $Z$ then reads
\begin{align} \label{Zsystem}
	\begin{cases}
		Z^{\prime}(x) = \varepsilon \mathbf{N}^{\varepsilon}(x)Z(x) \Comma \quad x\in I=[x_{0},x_{end}]\Comma\\
		Z(x_{0})=Z_{0}=\mathbf{P}U_{0} \Comma
	\end{cases}
\end{align}
where $\mathbf{N}^{\varepsilon}(x)$ is a (Hermitian) matrix with only off-diagonal non-zero entries:
\begin{align}\label{N_off_diagonal_entries}
	N_{1,2}^{\varepsilon}(x)=b(x)\operatorname{e}^{-\frac{2\ii}{\varepsilon}\phi^{\varepsilon}(x)}\Comma\quad N_{2,1}^{\varepsilon}(x)=b(x)\operatorname{e}^{\frac{2\ii}{\varepsilon}\phi^{\varepsilon}(x)} \period
\end{align}
As stated in \cite{Arnold2011WKBBasedSF}, the IVP (\ref{Zsystem}) admits a unique solution with the following estimates:
\begin{Proposition}{\cite[Theorem 3.1]{Arnold2011WKBBasedSF}}\label{proposition_Z_solution}
	Let Hypothesis \ref{hypothesis_A} be satisfied. Then the problem (\ref{Zsystem}) has a unique solution $Z\in C^{6}(I)$ with the explicit form
	\begin{align}\label{picard_limit}
		Z(x)=Z_{0}+\sum_{p=1}^{\infty}\eps^{p}\mathbf{M}_{p}^{\eps}(x;x_{0})Z_{0}\Comma
	\end{align}
	where the matrices $\mathbf{M}_{p}^{\eps}$, $p\geq 1$ are given by
	\begin{align}
		\mathbf{M}_{p}^{\eps}(\eta;\xi)&=\int_{\xi}^{\eta}\int_{\xi}^{y_{1}}\cdots\int_{\xi}^{y_{p-1}}\mathbf{N}^{\eps}(y_{1})\cdots\mathbf{N}^{\eps}(y_{p})\,\mathrm{d}y_{p}\cdots\mathrm{d}y_{1}\Comma\nonumber\\
		\mathbf{M}_{p}^{\eps}(\eta;\xi)&=\int_{\xi}^{\eta}\mathbf{N}^{\eps}(y)\mathbf{M}_{p-1}^{\eps}(y;\xi)\,\mathrm{d}y\Comma\quad \mathbf{M}_{0}^{\eps}=\mathbf{I}\period\label{M_p_def}
	\end{align}
	Here, $\mathbf{I}$ denotes the $2\times 2$ identity matrix. Moreover we have the estimates
	\begin{align}
		\lVert Z-Z_{0}\rVert_{L^{\infty}(I)} \leq C\eps^{2}\Comma\quad \lVert Z^{\prime}\rVert_{L^{\infty}(I)}\leq C\eps\Comma \quad \lVert Z^{\prime\prime}\rVert_{L^{\infty}(I)} \leq C \Comma
	\end{align}
	with a constant $C>0$ independent of $\eps$.
\end{Proposition}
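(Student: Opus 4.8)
The plan is to establish the three claims of Proposition~\ref{proposition_Z_solution} --- the explicit Picard/Neumann series \eqref{picard_limit}, and the three derivative estimates --- by exploiting the special structure of the system \eqref{Zsystem}, namely that the right-hand side carries an explicit factor $\varepsilon$ and that $\mathbf{N}^{\varepsilon}$ is bounded uniformly in $\varepsilon$.

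\medskip

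\noindent\textbf{Existence, uniqueness, and the series representation.} First I would recast \eqref{Zsystem} as the equivalent integral (fixed-point) equation
\begin{align}
	Z(x) = Z_{0} + \varepsilon\int_{x_{0}}^{x}\mathbf{N}^{\varepsilon}(y)Z(y)\,\mathrm{d}y \period \nonumber
\end{align}
Under Hypothesis~\ref{hypothesis_A}, $b$ is continuous on the compact interval $I$, hence $\lVert \mathbf{N}^{\varepsilon}\rVert_{L^{\infty}(I)}\leq B$ for some $B>0$ independent of $\varepsilon$ (the oscillatory exponentials have modulus one). The standard Picard iteration then converges, and unwinding it produces exactly the iterated integrals defining $\mathbf{M}_{p}^{\varepsilon}$ in \eqref{M_p_def}; the two forms given there are equivalent by Fubini/reordering of the nested integrals. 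Convergence of the series \eqref{picard_limit} is immediate from the bound $\lVert \mathbf{M}_{p}^{\varepsilon}(x;x_{0})\rVert \leq B^{p}(x_{end}-x_{0})^{p}/p!$, so the series is dominated by $\exp(\varepsilon B(x_{end}-x_{0}))$ and is therefore absolutely and uniformly convergent. The regularity $Z\in C^{6}(I)$ follows by bootstrapping: $a\in C^{7}(I)$ and $a\geq a_0>0$ give $b\in C^{5}(I)$ and $\phi^{\varepsilon}\in C^{6}(I)$, hence $\mathbf{N}^{\varepsilon}\in C^{5}(I)$; then $Z'=\varepsilon\mathbf{N}^{\varepsilon}Z$ lets one gain derivatives one order at a time.

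\medskip

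\noindent\textbf{The derivative estimates.} The bound $\lVert Z^{\prime}\rVert_{L^{\infty}(I)}\leq C\varepsilon$ is the most direct: from $Z'=\varepsilon\mathbf{N}^{\varepsilon}Z$ and the uniform boundedness of $Z$ (itself $\leq \lVert Z_0\rVert\exp(\varepsilon B|I|)$, hence uniformly bounded for $\varepsilon\leq\varepsilon_0$) we get $\lVert Z'\rVert \leq \varepsilon B \lVert Z\rVert_{L^\infty} \leq C\varepsilon$. Differentiating once more, $Z'' = \varepsilon(\mathbf{N}^{\varepsilon})'Z + \varepsilon\mathbf{N}^{\varepsilon}Z'$; here $(\mathbf{N}^{\varepsilon})'$ contains a term $\tfrac{2\ii}{\varepsilon}\phi^{\varepsilon\prime}b\,\ee^{\mp 2\ii\phi^{\varepsilon}/\varepsilon}$ in which the prefactor $1/\varepsilon$ cancels the leading $\varepsilon$, so $\lVert \varepsilon(\mathbf{N}^{\varepsilon})'\rVert$ stays $\mathcal{O}(1)$; combined with $\lVert Z'\rVert=\mathcal{O}(\varepsilon)$ this yields $\lVert Z''\rVert_{L^\infty(I)}\leq C$.

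\medskip

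\noindent\textbf{The sharp estimate and the main obstacle.} The estimate I expect to be the crux is $\lVert Z-Z_{0}\rVert_{L^{\infty}(I)}\leq C\varepsilon^{2}$, because the naive bound from the series only gives $\mathcal{O}(\varepsilon)$ (the $p=1$ term is $\varepsilon\mathbf{M}_1^\varepsilon Z_0$, a priori of size $\varepsilon$). The extra order must come from the oscillations: one has to show that the non-oscillatory $\mathcal{O}(\varepsilon)$ contribution actually cancels, which I would do by analyzing $\mathbf{M}_{1}^{\varepsilon}(x;x_0)=\int_{x_0}^{x}\mathbf{N}^{\varepsilon}(y)\,\mathrm{d}y$ via integration by parts. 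Writing the entries as $\int_{x_0}^x b(y)\ee^{\mp 2\ii\phi^{\varepsilon}(y)/\varepsilon}\,\mathrm{d}y$ and integrating by parts against $\tfrac{\dd}{\dd y}\ee^{\mp 2\ii\phi^{\varepsilon}(y)/\varepsilon} = \mp\tfrac{2\ii}{\varepsilon}\phi^{\varepsilon\prime}(y)\ee^{\mp 2\ii\phi^{\varepsilon}(y)/\varepsilon}$ produces an explicit factor $\varepsilon$ (since $\phi^{\varepsilon\prime}=\sqrt{a}-\varepsilon^2 b$ is bounded below away from zero by Hypothesis~\ref{hypothesis_A}, division by it is harmless), so $\varepsilon\mathbf{M}_1^\varepsilon = \mathcal{O}(\varepsilon^2)$. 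The remaining tail $\sum_{p\geq 2}\varepsilon^p\mathbf{M}_p^\varepsilon Z_0$ is $\mathcal{O}(\varepsilon^2)$ already from the crude factorial bound above, so the two contributions together give the claim. The technical care needed is to ensure the integration-by-parts boundary and remainder terms are genuinely $\mathcal{O}(\varepsilon^2)$ and controlled uniformly in $\varepsilon$; this is exactly the place where the smoothness of $a$ (hence of $b$ and $\phi^\varepsilon$) and the lower bound $a\geq a_0>0$ are essential.
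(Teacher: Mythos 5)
Your proposal is correct, but note that the paper does not prove this proposition at all: it is quoted verbatim from \cite[Theorem 3.1]{Arnold2011WKBBasedSF}, so there is no in-paper proof to compare against. Your reconstruction (Picard iteration for existence/uniqueness and the series, bootstrapping $Z'=\varepsilon\mathbf{N}^{\varepsilon}Z$ for the $C^{6}$ regularity and the $\mathcal{O}(\varepsilon)$, $\mathcal{O}(1)$ derivative bounds, and one integration by parts against $\ee^{\pm 2\ii\phi^{\varepsilon}/\varepsilon}$ to upgrade $\varepsilon\mathbf{M}_{1}^{\varepsilon}$ to $\mathcal{O}(\varepsilon^{2})$) is exactly the standard argument of the cited reference; the integration-by-parts step you identify as the crux is precisely the ``asymptotic method'' step that this paper reuses in the proof of Lemma~\ref{lemma_m1}, see \eqref{stam}.
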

\anton{Due to the oscillatory entries in its system matrix $\mathbf{N}^{\varepsilon}(x)$, see \eqref{N_off_diagonal_entries}, the solution $Z$ of IVP (\ref{Zsystem}) is still oscillatory, and it typically has twice the frequency compared to the wave function $\varphi$, see Figure 2.1 in \cite{Arnold2011WKBBasedSF}. But according to Proposition~\ref{proposition_Z_solution}, the solution $Z$ of IVP (\ref{Zsystem}) oscillates around the initial value $Z_{0}$ with a decreasing amplitude of at most $\mathcal{O}(\varepsilon^{2})$, and this makes the $Z$-oscillations decreasingly relevant as $\varepsilon\to0$. }
In \cite{Arnold2011WKBBasedSF}, this fact was the motivation \anton{and essential justification} for the construction of a uniformly correct scheme in $\eps$, and it shall also be the motivation for the scheme we develop in Section~\ref{chap:construction_scheme}.
The idea is that, instead of solving IVP (\ref{schroedinger_IVP}), we aim to solve the transformed problem (\ref{Zsystem}). Indeed, since the transformation (\ref{Z-trafo}) eliminated the dominant oscillations, the IVP (\ref{Zsystem}) can be solved numerically on a coarse grid $\{x_{n}\}$. Then, the originally desired solution $\varphi$ can be obtained by the inverse transformation
\begin{align}\label{U_inverse}
	U(x)=\mathbf{P}^{-1}\exp\left(\frac{\ii}{\varepsilon}\mathbf{\Phi}^{\varepsilon}(x)\right)Z(x)
\end{align}
and $\varphi(x)=a(x)^{-\frac{1}{4}}u_{1}(x)$.


\section{Construction of the numerical method}\label{chap:construction_scheme}
The aim of this section is to construct a third order (w.r.t.\ the step size) one-step scheme for solving the Schrödinger equation-IVP (\ref{schroedinger_IVP}). To this end, we build upon the WKB-based transformation from Section~\ref{chap:WKB-trafo}, in the same way as it was done in \cite{Arnold2011WKBBasedSF}. That is, instead of solving (\ref{schroedinger_IVP}) directly, we will solve the transformed problem (\ref{Zsystem}). 

We now consider a discretization $\left\{x_{0},x_{1},...,x_{N}\right\}$ of the interval $I=[x_{0}, x_{end}]$ and set $h:=\max_{1\leq n\leq N}\vert x_{n}-x_{n-1}\vert$ as the maximum step size. Further, throughout this whole section we will use the abbreviations $\xi:=x_{n}$ and $\eta:=x_{n+1}$.

The development of the one-step method relies on deriving specific quadrature rules for the matrix-valued integrals (\ref{M_p_def}), with error estimates depending on the two small parameters $\varepsilon$ and $h$. This leads us to introduce the following notation, which is analogous to the usual `big-O' notation for a single parameter:
\begin{Definition}
    Let $V$ be a vector space (over $\mathbb{C}$) with norm $\lVert\cdot\rVert$ and let $0<\varepsilon_{0},h_{0}<1$. Consider two functions $f:(0,\varepsilon_{0})\times(0,h_{0})\to V$ and $g:(0,\varepsilon_{0})\times(0,h_{0})\to \mathbb{R}$. We say that $f(\varepsilon,h)=\mathcal{O}_{\varepsilon,h}(g(\varepsilon,h))$, if and only if there exists a constant $C>0$, such that $\lVert f(\varepsilon,h)\rVert\leq C g(\varepsilon,h)$, for any $(\varepsilon,h)\in(0,\varepsilon_{0})\times(0,h_{0})$.
\end{Definition}
Now, in order to derive a numerical scheme of order $P$ (w.r.t.\ the step size $h$) for solving the IVP (\ref{Zsystem}), we shall start with the $P$-th order Picard approximation
\begin{align} \label{P_picard}
	Z(\eta)\approx Z(\xi)+\sum_{p=1}^{P}\eps^{p}\mathbf{M}_{p}^{\eps}(\eta;\xi)Z(\xi)
\end{align}
of (\ref{picard_limit}), which corresponds to making a single step within the grid, namely, from $\xi$ to $\eta$. According to \cite[Eq.\ (2.17)]{Arnold2011WKBBasedSF}, the remainder of this truncated series is of order $\mathcal{O}_{\varepsilon,h}(\eps^{P+1}h^{P}\min(\eps,h))$, i.e., we have
\begin{align}\label{picard_remainder}
	\left\lVert \sum_{p=P+1}^{\infty}\eps^{p}\mathbf{M}_{p}^{\eps}(\eta;\xi)\right\rVert_{\infty}\leq C\eps^{P+1}h^{P}\min(\eps,h)\Comma
\end{align}
where the constant $C\geq 0$ is independent of $\eps$ and $h$. Here, $\lVert\cdot\rVert_{\infty}$ denotes the $\infty$-matrix norm in $\mathbb{C}^{2\times 2}$. Since the entries (\ref{N_off_diagonal_entries}) of the matrix $\mathbf{N}^{\varepsilon}(y)$ are highly oscillatory, the matrices $\mathbf{M}_{p}^{\eps}(\eta;\xi)$ in (\ref{P_picard}) and (\ref{picard3}) involve (iterated) oscillatory integrals and can therefore not be computed exactly in general. To design suitable quadrature rules for the oscillatory integrals $\mathbf{M}_{p}^{\eps}(\eta;\xi)$ we shall \anton{follow the strategy from \cite{Arnold2011WKBBasedSF}} and use techniques \anton{motivated by} the \textit{asymptotic method} from \cite{Iserles2006HighlyOscill}, which mainly relies on integration by parts. \anton{But in contrast to \cite{Iserles2006HighlyOscill} we have to cope here with two small parameters, and they must appear in the desired error estimates.} If we want to keep the error order (\ref{picard_remainder}) also when approximating the oscillatory integrals, the desired error order (w.r.t.\ $\eps$ and $h$) for the approximation of the integral $\mathbf{M}_{p}^{\eps}$ is $\mathcal{O}_{\varepsilon,h}(\eps^{P+1-p}h^{P}\min(\eps,h))$.

To derive a third order (w.r.t.\ $h$) scheme for (\ref{Zsystem}), we only take into account the first three terms of the sum in (\ref{P_picard}). Indeed, using estimate (\ref{picard_remainder}) we then have
\begin{align}\label{picard3}
	Z(\eta)&=\left[\mathbf{I}+\varepsilon \mathbf{M}_{1}^{\varepsilon}(\eta;\xi)+\varepsilon^{2} \mathbf{M}_{2}^{\varepsilon}(\eta;\xi)+\varepsilon^{3} \mathbf{M}_{3}^{\varepsilon}(\eta;\xi)\right]Z(\xi)\nonumber\\
    &\quad+ \mathcal{O}_{\varepsilon,h}(\varepsilon^{4}h^{3}\min(\varepsilon,h))\period
\end{align}
The goal of the following three subsections is to construct approximations for the matrices $\mathbf{M}_{1}^{\varepsilon}$, $\mathbf{M}_{2}^{\varepsilon}$, and $\mathbf{M}_{3}^{\varepsilon}$, respectively. Unfortunately, achieving the above mentioned desired approximation error order for each matrix is not always possible, as some terms involved in the computations constrain the maximum achievable order w.r.t.\ $\eps$ (this was also the case in the construction of the second order scheme in \cite{Arnold2011WKBBasedSF}). However, we will be able to construct approximations for $\mathbf{M}_{p}^{\eps}$, $p=1,2,3$, which lead to a third order scheme with a local discretization error of the order $\mathcal{O}_{\eps,h}(\eps^{3}h^{4}\max(\eps,h))$.
\subsection{Approximation of the matrix \texorpdfstring{$\mathbf{M}_{1}^{\varepsilon}$}{M1}}\label{M1}
We start with the approximation of the integral
\begin{align}
	\mathbf{M}_{1}^{\varepsilon}(\eta;\xi)&=\int_{\xi}^{\eta}\mathbf{N}^{\varepsilon}(y)\,\mathrm{d}y\period\nonumber
\end{align}
As we already mentioned above, the desired error order for this approximation is $\mathcal{O}_{\varepsilon,h}(\eps^{3}h^{3}\min(\eps,h))$.

For the phase $\phi^{\eps}$ let us suppress the superscript $\eps$ and just write $\phi$ from now on. Further, we denote with
\begin{align}
    s_{n}:=\phi(\eta)-\phi(\xi)=\mathcal{O}(h)\Comma\quad h\to 0\Comma\nonumber
\end{align}
the phase increments and introduce the abbreviations
\begin{align}\label{b-compuations}
	b_{0}(x):=\frac{b(x)}{2\phi^{\prime}(x)}\Comma\quad b_{p+1}(x):=\frac{b_{p}^{\prime}(x)}{2\phi^{\prime}(x)}\Comma \quad p\in\mathbb{N}_{0}\period
\end{align}
Note that Hypothesis \ref{hypothesis_A} implies that for $0<\eps\leq \eps_{0}$, where $\eps_{0}$ is chosen sufficiently small, it holds $\phi'(x)\geq C_{\phi}>0$ for some constant $C_{\phi}$ and for all $x\in I$, see (\ref{phase}). Hence both the function $b$ and the functions $b_{p}$ are uniformly bounded (w.r.t.\ $\eps$) on the interval $I$.
Moreover, we define the auxiliary function 
\begin{align}
	h_{p}(x):=\ee^{\ii x}-\sum_{k=0}^{p-1}\frac{(\ii x)^{k}}{k!}\Comma \quad p\in\mathbb{N}_{0}\nonumber
\end{align}
(for $p=0$ the sum drops). In the following we will frequently use that
\begin{align}
    h_{p-1}(f(x))=-\ii (f'(x))^{-1}\frac{\dd}{\dd x}h_{p}(f(x)))\Comma\quad p\in\mathbb{N}\Comma\nonumber
\end{align}
for any differentiable function $f$ on $I$ and any $x\in I$ with $f'(x)\neq0$, and
\begin{align}\label{h_p_order}
		h_{p}(x)=\mathcal{O}(\min(x^{p},x^{p-1}))\Comma \quad x\geq 0\Comma \quad p\in\mathbb{N}\period
\end{align}

The following lemma provides an approximation to $\mathbf{M}_{1}^{\varepsilon}$, with an error order $\mathcal{O}_{\varepsilon,h}(\eps^{P}h^{\widetilde{P}}\min(\eps,h))$, for any $P\in\mathbb{N}_{0}$, $\widetilde{P}\in\mathbb{N}$.
\begin{Lemma}\label{lemma_m1}
Let Hypothesis \ref{hypothesis_A} be satisfied. For any $P\in\mathbb{N}_{0}$ and $\widetilde{P}\in\mathbb{N}$ define
	\begin{align}\label{Q_1}
		Q_{1}^{P,\widetilde{P}}(\eta,\xi)&:=-\sum_{p=1}^{P}(\ii\eps)^{p}\left(b_{p-1}(\eta)\ee^{\frac{2\ii}{\eps}\phi(\eta)}-b_{p-1}(\xi)\ee^{\frac{2\ii}{\eps}\phi(\xi)}\right)\nonumber\\
		&\quad-\ee^{\frac{2\ii}{\eps}\phi(\xi)}\sum_{p=1}^{\widetilde{P}}(\ii\eps)^{p+P}b_{p+P-1}(\eta)h_{p}\left(\frac{2}{\eps}s_{n}\right)
	\end{align}
(for $P=0$ the first sum drops) and
	\begin{align}\label{Q1}
		\mathbf{Q}_{1}^{P,\widetilde{P}}(\eta,\xi):=\begin{pmatrix}
			0 & \overline{Q_{1}^{P,\widetilde{P}}(\eta,\xi)} \\
			Q_{1}^{P,\widetilde{P}}(\eta,\xi) & 0 
		\end{pmatrix}\period
	\end{align}
	Then there exists $C\geq 0$ independent of $\varepsilon\in(0,\eps_{0}]$, $h$, and $n$, such that
	\begin{align}\label{mixed expansion}
		\lVert \mathbf{M}_{1}^{\eps}(\eta,\xi)-\mathbf{Q}_{1}^{P,\widetilde{P}}(\eta,\xi)\rVert_{\infty} \leq C\eps^{P}h^{\widetilde{P}}\min(\eps,h)\period
	\end{align}
\end{Lemma}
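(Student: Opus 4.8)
The plan is to reduce the matrix estimate to a single scalar oscillatory integral and then to approximate that integral by a \emph{two-stage} integration-by-parts procedure, the first stage producing the first sum of $Q_{1}^{P,\widetilde P}$ and the second stage producing the sum involving the $h_{p}$'s. Since $b$ and $\phi$ are real, the matrix $\mathbf{M}_{1}^{\eps}(\eta;\xi)=\int_{\xi}^{\eta}\mathbf{N}^{\eps}(y)\,\dd y$ is off-diagonal with lower-left entry $I:=\int_{\xi}^{\eta}b(y)\ee^{\frac{2\ii}{\eps}\phi(y)}\,\dd y$ and upper-right entry $\overline{I}$, and $\mathbf{Q}_{1}^{P,\widetilde P}$ has the same structure by \eqref{Q1}. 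Hence $\lVert\mathbf{M}_{1}^{\eps}-\mathbf{Q}_{1}^{P,\widetilde P}\rVert_{\infty}=\lvert I-Q_{1}^{P,\widetilde P}(\eta,\xi)\rvert$, and it suffices to bound this scalar quantity. For the first stage I would use the relations $b=2\phi'b_{0}$ and $b_{p}'=2\phi'b_{p+1}$ from \eqref{b-compuations}, together with $2\phi'(y)\ee^{\frac{2\ii}{\eps}\phi(y)}=-\ii\eps\frac{\dd}{\dd y}\ee^{\frac{2\ii}{\eps}\phi(y)}$, and integrate $I$ by parts $P$ times. Each step peels off a two-sided boundary term $-(\ii\eps)^{p}\bigl(b_{p-1}(\eta)\ee^{\frac{2\ii}{\eps}\phi(\eta)}-b_{p-1}(\xi)\ee^{\frac{2\ii}{\eps}\phi(\xi)}\bigr)$; an induction shows these reproduce exactly the first sum in \eqref{Q_1} and leave the exact remainder
\[
R_{P}=(\ii\eps)^{P}\int_{\xi}^{\eta}b_{P-1}'(y)\,\ee^{\frac{2\ii}{\eps}\phi(y)}\,\dd y\period
\]

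For the second stage I would factor out the constant phase, writing $\ee^{\frac{2\ii}{\eps}\phi(y)}=\ee^{\frac{2\ii}{\eps}\phi(\xi)}\ee^{\ii\psi(y)}$ with $\psi(y):=\frac{2}{\eps}\bigl(\phi(y)-\phi(\xi)\bigr)$, so that $\psi(\xi)=0$, $\psi(\eta)=\frac{2}{\eps}s_{n}$ and $2\phi'=\eps\psi'$; note $R_{P}$ is then exactly the stage-two remainder at step $0$, since $h_{0}(\psi)=\ee^{\ii\psi}$. The crucial device is to integrate by parts using the $h_{p}$-primitives: from $h_{p}'=\ii h_{p-1}$ (equivalently the displayed identity $h_{p-1}(\psi)=-\ii(\psi')^{-1}\frac{\dd}{\dd y}h_{p}(\psi)$) one gets $\psi'(y)h_{p-1}(\psi(y))=-\ii\frac{\dd}{\dd y}h_{p}(\psi(y))$, and since $h_{p}(\psi(\xi))=h_{p}(0)=0$ for $p\ge1$, only the $\eta$-endpoint survives at each step. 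Performing $\widetilde P$ further integrations by parts thus peels off precisely the terms $-\ee^{\frac{2\ii}{\eps}\phi(\xi)}(\ii\eps)^{p+P}b_{p+P-1}(\eta)h_{p}\bigl(\frac{2}{\eps}s_{n}\bigr)$ of the second sum in \eqref{Q_1}, leaving the exact remainder
\[
S_{\widetilde P}=(\ii\eps)^{P+\widetilde P}\ee^{\frac{2\ii}{\eps}\phi(\xi)}\int_{\xi}^{\eta}b_{P+\widetilde P-1}'(y)\,h_{\widetilde P}(\psi(y))\,\dd y\Comma
\]
so that $I-Q_{1}^{P,\widetilde P}(\eta,\xi)=S_{\widetilde P}$ identically.

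It then remains to estimate $S_{\widetilde P}$. By Hypothesis~\ref{hypothesis_A} (and $\phi'\ge C_{\phi}>0$) the function $b_{P+\widetilde P-1}'$ is bounded on $I$ uniformly in $\eps$, so $\lvert S_{\widetilde P}\rvert\le C\eps^{P+\widetilde P}\int_{\xi}^{\eta}\lvert h_{\widetilde P}(\psi(y))\rvert\,\dd y$. Using $\lvert\psi(y)\rvert\le\frac{2}{\eps}\lVert\phi'\rVert_{\infty}\lvert y-\xi\rvert\le\frac{C}{\eps}h$ together with the size bound \eqref{h_p_order}, I would split into two cases (this is where the factor $\min(\eps,h)$ appears). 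If $h\le\eps$ then $\lvert\psi\rvert$ is bounded and the $x^{\widetilde P}$-branch of \eqref{h_p_order} gives $\int_{\xi}^{\eta}\lvert h_{\widetilde P}(\psi)\rvert\,\dd y\le C(h/\eps)^{\widetilde P}h$, whence $\lvert S_{\widetilde P}\rvert\le C\eps^{P}h^{\widetilde P+1}=C\eps^{P}h^{\widetilde P}\min(\eps,h)$. If $h>\eps$ the $x^{\widetilde P-1}$-branch together with $\int_{\xi}^{\eta}\lvert y-\xi\rvert^{\widetilde P-1}\,\dd y\le C h^{\widetilde P}$ gives $\lvert S_{\widetilde P}\rvert\le C\eps^{P+1}h^{\widetilde P}=C\eps^{P}h^{\widetilde P}\min(\eps,h)$. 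In both cases the claimed estimate \eqref{mixed expansion} follows.

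\textbf{Main obstacle.} The final two-case estimate is routine; the genuinely delicate point is the \emph{design} of the two-stage integration by parts. In the first stage the plain primitive $\ee^{\frac{2\ii}{\eps}\phi}$ produces boundary contributions at \emph{both} endpoints, which carry no additional $h$- or $\min(\eps,h)$-smallness, so iterating it alone cannot reach the required order. Switching in the second stage to the primitives $h_{p}(\psi)$, which vanish at $\xi$, is precisely what turns the residual into single-endpoint terms $h_{p}\bigl(\frac{2}{\eps}s_{n}\bigr)$ with the favourable behaviour near $s_{n}=0$. The accompanying bookkeeping---keeping the prefactors $(\ii\eps)^{p}$ and the indices of the $b_{p}$ aligned so that the two telescoping remainders reproduce exactly the two prescribed sums in \eqref{Q_1}---is where most of the care is needed, along with verifying (via Hypothesis~\ref{hypothesis_A}) that the regularity of $a$ suffices for all derivatives $b_{P+\widetilde P-1}'$ occurring in the procedure to be bounded.
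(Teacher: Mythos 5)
Your proposal is correct and follows essentially the same route as the paper: reduce to the scalar entry $m_{1}^{\eps}=\left(\mathbf{M}_{1}^{\eps}\right)_{2,1}$, perform $P$ integrations by parts with the plain primitive $\ee^{\frac{2\ii}{\eps}\phi}$ (the paper's asymptotic method, giving the first sum), then $\widetilde{P}$ integrations by parts with the shifted primitives $h_{p}\left(\frac{2}{\eps}(\phi(y)-\phi(\xi))\right)$ vanishing at $\xi$ (the paper's shifted asymptotic method, giving the second sum), and finally bound the remainder via the two branches of \eqref{h_p_order}. Your explicit two-case estimate of $S_{\widetilde{P}}$ is just a spelled-out version of the paper's one-line use of $h_{\widetilde{P}}=\mathcal{O}_{\varepsilon,h}(\min(h^{\widetilde{P}}\eps^{-\widetilde{P}},h^{\widetilde{P}-1}\eps^{-(\widetilde{P}-1)}))$, so there is nothing to add.
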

\begin{proof}
	Recall from (\ref{N_off_diagonal_entries}) that $\mathbf{N}^{\varepsilon}(y)$ is a Hermitian matrix with only off-diagonal non-zero entries, namely  $N_{2,1}^{\varepsilon}(y)=b(y)\operatorname{e}^{\frac{2\ii}{\varepsilon}\phi(y)}=\overline{N_{1,2}^{\varepsilon}(y)}$. Hence, it is sufficient to prove that $\lVert m_{1}^{\eps}(\eta,\xi)-Q_{1}^{P,\widetilde{P}}(\eta,\xi)\rVert_{\infty} \leq C\eps^{P}h^{\widetilde{P}}\min(\eps,h)$, where $m_{1}^{\varepsilon}(\eta,\xi):=\left(\mathbf{M}_{1}^{\varepsilon}(\eta;\xi)\right)_{2,1}$. To this end, we start by expanding the integral $m_{1}^{\varepsilon}(\eta,\xi)$ by making $P$ steps of the so-called \textit{asymptotic method (AM)} for oscillatory integrals (cf.\ \cite{Iserles2006HighlyOscill}), which relies on repeated integration by parts:
	\begin{align}\label{stam}
		m_{1}^{\varepsilon}(\eta,\xi)&=\int_{\xi}^{\eta}b(y)\ee^{\frac{2\ii}{\eps}\phi(y)}\,\mathrm{d}y\nonumber\\
		&=-(\ii\eps)\int_{\xi}^{\eta}b_{0}(y)\frac{\dd}{\dd y}\left[\ee^{\frac{2\ii}{\eps}\phi(y)}\right]\,\dd y\nonumber\\
		&=-(\ii\eps)\left[b_{0}(y)\ee^{\frac{2\ii}{\eps}\phi(y)}\right]_{\xi}^{\eta}+(\ii\eps)\int_{\xi}^{\eta}b_{0}^{\prime}(y)\ee^{\frac{2\ii}{\eps}\phi(y)}\,\dd y\nonumber\\
		&=-\sum_{p=1}^{P}(\ii\eps)^{p}\left[b_{p-1}(y)\ee^{\frac{2\ii}{\eps}\phi(y)}\right]^{\eta}_{\xi}+T_{P}^{\eps}(\eta,\xi)\period
	\end{align}
	Here, $T_{P}^{\eps}(\eta,\xi)$ is the remaining integral after the last integration by parts. It is evident from (\ref{stam}) that each step of integration by parts increases the $\eps$-order of the remaining integral by one, since the functions $b_{p}$ are uniformly bounded w.r.t.\ $\eps$. Next we approximate $T_{P}^{\eps}(\eta,\xi)$ by making $\widetilde{P}$ steps of the so-called \textit{shifted asymptotic method (SAM)} (see \cite{Arnold2011WKBBasedSF}), which also relies on repeated integration by parts. However, in contrast to the AM, the idea of the SAM is to shift the oscillatory factor such that an additional zero within the integration interval is created (here we create it at $y=\xi$). This increases the $h$-order of the remainder by one in each step:
	\begin{align}\label{sham}
		T_{P}^{\eps}(\eta,\xi)&=(\ii\eps)^{P}\int_{\xi}^{\eta}b_{P-1}^{\prime}(y)\ee^{\frac{2\ii}{\eps}\phi(y)}\,\dd y\nonumber\\
		&=-(\ii\eps)^{P+1}\ee^{\frac{2\ii}{\eps}\phi(\xi)}\int_{\xi}^{\eta}b_{P}(y)\frac{\dd}{\dd y}\left[h_{1}\left(\frac{2}{\eps}(\phi(y)-\phi(\xi))\right)\right]\,\dd y\nonumber\\
		&=-(\ii\eps)^{P+1}\ee^{\frac{2\ii}{\eps}\phi(\xi)}\Bigg\{b_{P}(\eta)h_{1}\left(\frac{2}{\eps}s_{n}\right)\nonumber\\
        &\quad\quad\quad\quad\quad\quad\quad\quad\quad-\int_{\xi}^{\eta}b_{P}^{\prime}(y)h_{1}\left(\frac{2}{\eps}(\phi(y)-\phi(\xi))\right)\,\dd y\Bigg\}\nonumber\\
		&=-(\ii\eps)^{P+1}\ee^{\frac{2\ii}{\eps}\phi(\xi)}\Bigg\{b_{P}(\eta)h_{1}\left(\frac{2}{\eps}s_{n}\right)+(\ii\eps)b_{P+1}(\eta)h_{2}\left(\frac{2}{\eps}s_{n}\right)\nonumber\\
		&\quad\quad\quad\quad\quad\quad\quad\quad\quad-(\ii\eps)\int_{\xi}^{\eta}b_{P+1}^{\prime}(y)h_{2}\left(\frac{2}{\eps}(\phi(y)-\phi(\xi))\right)\,\dd y\Bigg\}\nonumber\\
		&=-(\ii\eps)^{P+1}\ee^{\frac{2\ii}{\eps}\phi(\xi)}\sum_{p=1}^{\widetilde{P}}(\ii\eps)^{p-1}b_{p+P-1}(\eta)h_{p}\left(\frac{2}{\eps}s_{n}\right)\nonumber\\
        &\quad+\mathcal{O}_{\varepsilon,h}(\eps^{P}h^{\widetilde{P}}\min(\eps,h))
	\end{align}
	(for $P=0$ we set $b_{-1}':=b$). Here, we used for the remainder integral in the last equation $h_{\widetilde{P}}\left(\frac{2}{\eps}(\phi(y)-\phi(\xi))\right)=\mathcal{O}_{\varepsilon,h}(\min(h^{\widetilde{P}}\eps^{-\widetilde{P}},h^{\widetilde{P}-1}\eps^{-(\widetilde{P}-1)}))$, which is a consequence of (\ref{h_p_order}). This concludes the proof.
\end{proof}
\begin{Remark}
    A drawback of using quadrature (\ref{Q_1}) is that one has to provide explicit formulas for the functions $b_{p}$. Indeed, since these functions are defined in a recursive manner by differentiating a quotient of two functions, the number of terms involved in the formulas grows fast with $p$. For an efficient implementation we recommend to express the functions $b_{p}$ only through the functions $b^{(k)}$, $k=0,\dots,p$, and $\phi^{(k)}$, $k=1,\dots, p+1$, as this keeps the formulas much shorter compared to when expressing $b_{p}$ through $a$ and its derivatives $a^{(k)}$, $k=1,\dots, p+2$.
\end{Remark}
In the following, almost all approximations of occurring integrals will be derived by using the AM or the SAM. Recall that each step of the AM in (\ref{stam}) increased the $\eps$-order of the remainder by one, whereas each step of the SAM in (\ref{sham}) increased the $h$-order by one. Thus, by combining both methods appropriately, one might hope to be able to achieve the desired mixed (w.r.t.\ $\eps$ and $h$) error order also for the approximations of the matrices $\mathbf{M}_{2}^{\eps}$ and $\mathbf{M}_{3}^{\eps}$. Unfortunately, this will not be the case as we will see in the following two subsections. Finally, we note that this strategy of combining the AM with the SAM was already used in \cite{Arnold2011WKBBasedSF} for the construction of their second order (w.r.t.\ $h$) scheme.
\subsection{Approximation of the matrix \texorpdfstring{$\mathbf{M}_{2}^{\eps}$}{M2}}\label{M2}
Next, according to (\ref{picard3}), we would like to approximate the integral
\begin{align}\label{M2_matrix}
    \mathbf{M}_{2}^{\eps}(\eta;\xi)=\int_{\xi}^{\eta}\mathbf{N}^{\eps}(y)\mathbf{M}_{1}^{\eps}(y;\xi)\,\dd y
\end{align}
with the error order $\mathcal{O}_{\varepsilon,h}(\eps^{2}h^{3}\min(\eps,h))$. But this will not be possible since the matrix product $\mathbf{N}^{\eps}(y)\mathbf{M}_{1}^{\eps}(y;\xi)$ contains terms which involve non-oscillatory integrals. Indeed, the basis for the AM to increase the $\varepsilon$-order in each step of (\ref{stam}) was the oscillatory factor $\ee^{\frac{2\ii}{\eps}\phi(y)}$ of the integrand. However, for non-oscillatory integrals, it is only feasible to derive approximations with arbitrary $h$-order, but not with arbitrary $\eps$-order. Nonetheless, in this subsection, we will derive an approximation for $\mathbf{M}_{2}^{\eps}$, which ultimately leads to a third order scheme (w.r.t.\ $h$) that is uniformly correct w.r.t.\ $\eps$ -- in fact with a local discretization error of the order $\mathcal{O}_{\eps,h}(\eps^{3}h^{4}\max(\eps,h))$. This will be achieved by employing a sufficiently accurate quadrature formula for the non-oscillatory integrals. To this end, let us introduce the notation
\begin{align}
	Q_{S}[f](\eta,\xi):=\frac{\eta-\xi}{6}\left(f(\xi)+4f\left(\frac{\xi+\eta}{2}\right)+f(\eta)\right)\Comma\nonumber
\end{align}
which denotes Simpson's rule applied to the integral $\int_{\xi}^{\eta}f(y)\,\mathrm{d}y$, admitting an error of the order $\mathcal{O}(h^{5})$.

Since $\mathbf{N}^{\eps}$ and $\mathbf{M}_{1}^{\eps}$ are off-diagonal Hermitian matrices, the matrix $\mathbf{M}_{2}^{\eps}$ is diagonal and the entries are conjugate of one another. Hence, we shall in the following only study one entry, namely,
\begin{align}\label{m2_definition}
    m_{2}^{\eps}(\eta,\xi):=\left(\mathbf{M}_{2}^{\eps}(\eta;\xi)\right)_{1,1}=\int_{\xi}^{\eta}\left(\mathbf{N}^{\eps}(y)\right)_{1,2}\left(\mathbf{M}_{1}^{\eps}(y;\xi)\right)_{2,1}\,\dd y\period    
\end{align}
Here, it seems reasonable to insert for $\left(\mathbf{M}_{1}^{\eps}(y;\xi)\right)_{2,1}=m_{1}^{\varepsilon}(y,\xi)$ the approximation from Lemma \ref{lemma_m1} with $P=\widetilde{P}=2$, namely $Q_{1}^{2,2}(y,\xi)$, as the order of its approximation error increases exactly to $\mathcal{O}_{\varepsilon,h}(\eps^{2}h^{3}\min(\eps,h))$, when integrating over the subinterval $[\xi,\eta]$. From (\ref{m2_definition}), (\ref{N_off_diagonal_entries}), and (\ref{mixed expansion}) we have:
\begin{align}\label{m_2_approximation}
	m_{2}^{\eps}(\eta,\xi)&=\int_{\xi}^{\eta}b(y)\ee^{-\frac{2\ii}{\eps}\phi(y)}\left\{Q_{1}^{2,2}(y,\xi)+\mathcal{O}_{\varepsilon,h}(\eps^{2}h^{2}\min(\eps,h))\right\}\,\dd y\nonumber\\
	&=\int_{\xi}^{\eta}b(y)\ee^{-\frac{2\ii}{\eps}\phi(y)}\Bigg\{-(\ii\eps)\left(b_{0}(y)\ee^{\frac{2\ii}{\eps}\phi(y)}-b_{0}(\xi)\ee^{\frac{2\ii}{\eps}\phi(\xi)}\right)\nonumber\\
    &\quad\quad\quad\quad\quad\quad\quad\quad\quad-(\ii\eps)^{2}\left(b_{1}(y)\ee^{\frac{2\ii}{\eps}\phi(y)}-b_{1}(\xi)\ee^{\frac{2\ii}{\eps}\phi(\xi)}\right)\nonumber\\
    &\quad\quad\quad\quad\quad\quad\quad\quad\quad-(\ii\eps)^{3}\ee^{\frac{2\ii}{\eps}\phi(\xi)}b_{2}(y)h_{1}\left(\frac{2}{\eps}(\phi(y)-\phi(\xi))\right)\nonumber\\
    &\quad\quad\quad\quad\quad\quad\quad\quad\quad-(\ii\eps)^{4}\ee^{\frac{2\ii}{\eps}\phi(\xi)}b_{3}(y)h_{2}\left(\frac{2}{\eps}(\phi(y)-\phi(\xi))\right)\nonumber\\
    &\quad\quad\quad\quad\quad\quad\quad\quad\quad+\mathcal{O}_{\varepsilon,h}(\eps^{2}h^{2}\min(\eps,h))\Bigg\}\,\dd y\nonumber\\
	&=J_{1}+J_{2}+J_{3}+J_{4}+J_{5}+J_{6}+\mathcal{O}_{\varepsilon,h}(\eps^{2}h^{3}\min(\eps,h))\Comma
\end{align}
where
\begin{align}
	J_{1}&:=-(\ii\eps)\int_{\xi}^{\eta}b(y)b_{0}(y)\,\dd y = -(\ii\eps)Q_{S}[bb_{0}](\eta,\xi)+\mathcal{O}_{\varepsilon,h}(\varepsilon h^5) \Comma\label{J1}\\
	J_{2}&:=(\ii\eps)b_{0}(\xi)\ee^{\frac{2\ii}{\eps}\phi(\xi)}\int_{\xi}^{\eta}b(y)\ee^{-\frac{2\ii}{\eps}\phi(y)}\,\dd y =(\ii\eps)b_{0}(\xi)\ee^{\frac{2\ii}{\eps}\phi(\xi)}\overline{m_{1}^{\eps}(\eta,\xi)}\Comma\nonumber\\
	J_{3}&:=-(\ii\eps)^{2}\int_{\xi}^{\eta}b(y)b_{1}(y)\,\dd y = -(\ii\eps)^{2}Q_{S}[bb_{1}](\eta,\xi)+\mathcal{O}_{\varepsilon,h}(\varepsilon^{2} h^{5}) \Comma\label{J3}\\
	J_{4}&:=(\ii\eps)^{2}b_{1}(\xi)\ee^{\frac{2\ii}{\eps}\phi(\xi)}\int_{\xi}^{\eta}b(y)\ee^{-\frac{2\ii}{\eps}\phi(y)}\,\dd y = (\ii\eps)^{2}b_{1}(\xi)\ee^{\frac{2\ii}{\eps}\phi(\xi)}\overline{m_{1}^{\eps}(\eta,\xi)}\Comma\nonumber\\
	J_{5}&:=-(\ii\eps)^{3}\ee^{\frac{2\ii}{\eps}\phi(\xi)}\int_{\xi}^{\eta}b(y)b_{2}(y)\ee^{-\frac{2\ii}{\eps}\phi(y)}h_{1}\left(\frac{2}{\eps}(\phi(y)-\phi(\xi))\right)\,\dd y \Comma\nonumber\\
	J_{6}&:=-(\ii\eps)^{4}\ee^{\frac{2\ii}{\eps}\phi(\xi)}\int_{\xi}^{\eta}b(y)b_{3}(y)\ee^{-\frac{2\ii}{\eps}\phi(y)}h_{2}\left(\frac{2}{\eps}(\phi(y)-\phi(\xi))\right)\,\dd y\period\nonumber
\end{align}
Our next goal is to find suitable approximations for the integrals $J_{2}$, $J_{4}$, $J_{5}$, and $J_{6}$. Using Lemma~\ref{lemma_m1} with $P=1$ and $\widetilde{P}=3$ we obtain by using $\overline{h_{p}(x)}=h_{p}(-x)$:
\begin{align}\label{J2}
J_{2}
	&=(\ii\eps)^{2}b_{0}(\xi)\ee^{\frac{2\ii}{\eps}\phi(\xi)}\left[b_{0}(y)\ee^{-\frac{2\ii}{\eps}\phi(y)}\right]_{\xi}^{\eta}-(\ii\eps)^{3}b_{0}(\xi)\Bigg\{b_{1}(\eta)h_{1}\left(-\frac{2}{\eps}s_{n}\right)\nonumber\\
	&\quad-(\ii\eps)b_{2}(\eta)h_{2}\left(-\frac{2}{\eps}s_{n}\right)+(\ii\eps)^{2}b_{3}(\eta)h_{3}\left(-\frac{2}{\eps}s_{n}\right)\Bigg\}\nonumber\\
    &\quad+\mathcal{O}_{\varepsilon,h}(\eps^{2}h^{3}\min(\eps,h)).
\end{align}
Similarly, applying Lemma~\ref{lemma_m1} with $P=0$ and $\widetilde{P}=3$ we find that
\begin{align}\label{J4}
	J_{4}
	&=(\ii\eps)^{3}b_{1}(\xi)\Bigg\{b_{0}(\eta)h_{1}\left(-\frac{2}{\eps}s_{n}\right)-(\ii\eps)b_{1}(\eta)h_{2}\left(-\frac{2}{\eps}s_{n}\right)\nonumber\\
	&\quad+(\ii\eps)^{2}b_{2}(\eta)h_{3}\left(-\frac{2}{\eps}s_{n}\right)\Bigg\}+\mathcal{O}_{\varepsilon,h}(\eps^{2}h^{3}\min(\eps,h))\period
\end{align}
For the approximation of the integral $J_{5}$, we begin by using the simple identity
\begin{align}\label{help1}
	\ee^{-\frac{2\ii}{\eps}\phi(y)}h_{1}\left(\frac{2}{\eps}(\phi(y)-\phi(\xi))\right)=-\ee^{-\frac{2\ii}{\eps}\phi(\xi)}h_{1}\left(\frac{2}{\eps}(\phi(\xi)-\phi(y))\right)\period
\end{align}
Then, by making two SAM-steps, we derive
\begin{align}\label{J5}
	J_{5}
	&=(\ii\eps)^{4}\Bigg\{b_{0}(\eta)b_{2}(\eta)h_{2}\left(-\frac{2}{\eps}s_{n}\right)\nonumber\\
	&\quad-(\ii\eps)\left(b_{1}(\eta)b_{2}(\eta)+b_{0}(\eta)b_{3}(\eta)\right)h_{3}\left(-\frac{2}{\eps}s_{n}\right)\Bigg\}+\mathcal{O}_{\varepsilon,h}(\eps^{2}h^{3}\min(\eps,h)).
\end{align}
In order to approximate $J_{6}$ we use the identity
\begin{align}\label{help2}
	\ee^{\frac{2\ii}{\eps}\phi(\xi)}&\ee^{-\frac{2\ii}{\eps}\phi(y)}h_{2}\left(\frac{2}{\eps}(\phi(y)-\phi(\xi))\right)\nonumber\\
	&=-h_{2}\left(\frac{2}{\eps}(\phi(\xi)-\phi(y))\right)+\frac{2\ii}{\eps}(\phi(\xi)-\phi(y))h_{1}\left(\frac{2}{\eps}(\phi(\xi)-\phi(y))\right)\Comma\
\end{align}
which allows us to split the integral into $J_{6}=\widetilde{J}_{6}+\widehat{J}_{6}$, with
\begin{align}
	\widetilde{J}_{6}&:=(\ii\eps)^{4}\int_{\xi}^{\eta}b(y)b_{3}(y)h_{2}\left(\frac{2}{\eps}(\phi(\xi)-\phi(y))\right)\,\dd y\Comma\nonumber\\
	\widehat{J}_{6}&:=2(\ii\eps)^{3}\int_{\xi}^{\eta}b(y)b_{3}(y)\left(\phi(\xi)-\phi(y)\right)h_{1}\left(\frac{2}{\eps}(\phi(\xi)-\phi(y))\right)\,\dd y\period\nonumber
\end{align}
One SAM-step for $\widetilde{J}_{6}$ leads to
\begin{align}\label{J6tilde}
	\widetilde{J}_{6}&=(\ii\eps)^{5}b_{0}(\eta)b_{3}(\eta)h_{3}\left(-\frac{2}{\eps}s_{n}\right)+\mathcal{O}_{\varepsilon,h}(\eps^{2}h^{3}\min(\eps,h)) \period
\end{align}
For the approximation of $\widehat{J}_{6}$ we start with one SAM-step to see that
\begin{align}\label{J6hat_1}
	\widehat{J}_{6}
	&=2(\ii\eps)^{4}\Bigg\{b_{0}(\eta)b_{3}(\eta)(-s_{n})h_{2}\left(-\frac{2}{\eps}s_{n}\right)\nonumber\\
	&\quad-\underbrace{\int_{\xi}^{\eta}\left[b_{0}(y)b_{3}(y)\left(\phi(\xi)-\phi(y)\right)\right]^{\prime}h_{2}\left(\frac{2}{\eps}(\phi(\xi)-\phi(y))\right)\,\dd y}_{=:I}\Bigg\}\period
\end{align}
Now, using $b_{3}(y)\phi^{\prime}(y)=\frac{1}{2}b_{2}^{\prime}(y)$, we split up the integral $I$ as
\begin{align}\label{I}
	I&=\int_{\xi}^{\eta}\left(b_{0}^{\prime}(y)b_{3}(y)+b_{0}(y)b_{3}^{\prime}(y)\right)\left(\phi(\xi)-\phi(y)\right)h_{2}\left(\frac{2}{\eps}(\phi(\xi)-\phi(y))\right)\,\dd y\nonumber\\
	&\quad-\frac{1}{2}\int_{\xi}^{\eta}b_{0}(y)b_{2}^{\prime}(y)h_{2}\left(\frac{2}{\eps}(\phi(\xi)-\phi(y))\right)\,\dd y\nonumber\\
	&=(\ii\eps)\left(b_{1}(\eta)b_{3}(\eta)+b_{0}(\eta)b_{4}(\eta)\right)(-s_{n})h_{3}\left(-\frac{2}{\eps}s_{n}\right)\nonumber\\
	&\quad-(\ii\eps)\int_{\xi}^{\eta}\left[\left(b_{1}(y)b_{3}(y)+b_{0}(y)b_{4}(y)\right)\left(\phi(\xi)-\phi(y)\right)\right]^{\prime}h_{3}\left(\frac{2}{\eps}(\phi(\xi)-\phi(y))\right)\,\dd y\nonumber\\
	&\quad-\frac{1}{2}(\ii\eps)b_{0}(\eta)b_{3}(\eta)h_{3}\left(-\frac{2}{\eps}s_{n}\right)+\mathcal{O}_{\varepsilon,h}(\varepsilon^{2}\min(h^{4}\eps^{-4},h^{3}\eps^{-3}))\Comma
\end{align}
where we used one SAM-step for each integral in the last equation. Note that, combined with the $\mathcal{O}(\varepsilon^{4})$-factor from (\ref{J6hat_1}), the first term in (\ref{I}) is already of the desired order $\mathcal{O}_{\varepsilon,h}(\varepsilon^{2}h^{3}\min(\varepsilon,h))$ and can thus be omitted. Using (\ref{h_p_order}) we see that the second term in (\ref{I}) has the same order, and hence can also be neglected. Thus, we have
\begin{align}\label{J6hat_2}
	\widehat{J}_{6}&=2(\ii\eps)^{4}\Bigg\{b_{0}(\eta)b_{3}(\eta)(-s_{n})h_{2}\left(-\frac{2}{\eps}s_{n}\right)+\frac{1}{2}(\ii\eps)b_{0}(\eta)b_{3}(\eta)h_{3}\left(-\frac{2}{\eps}s_{n}\right)\Bigg\}\nonumber\\
    &\quad+\mathcal{O}_{\varepsilon,h}(\eps^{2}h^{3}\min(\eps,h))\Comma
\end{align}
and hence, by combining (\ref{J6tilde}) and (\ref{J6hat_2}),
\begin{align}\label{J6}
	J_{6}
	&=2(\ii\eps)^{4}b_{0}(\eta)b_{3}(\eta)(-s_{n})h_{2}\left(-\frac{2}{\eps}s_{n}\right)+2(\ii\eps)^{5}b_{0}(\eta)b_{3}(\eta)h_{3}\left(-\frac{2}{\eps}s_{n}\right)\nonumber\\
    &\quad+\mathcal{O}_{\varepsilon,h}(\eps^{2}h^{3}\min(\eps,h)) \period
\end{align}
Finally, we summarize the approximations (\ref{m_2_approximation})-(\ref{J4}), (\ref{J5}), and (\ref{J6}) in the following lemma.
\begin{Lemma}\label{lemma_m2}
Let Hypothesis \ref{hypothesis_A} be satisfied and define
\begin{align}\label{Q_2}
	Q_{2}&(\eta,\xi):=\nonumber\\
    &-\ii\eps Q_{S}[bb_{0}](\eta,\xi)\nonumber\\
	&-\eps^{2}\left[b_{0}(\xi)b_{0}(\eta)h_{0}\left(-\frac{2}{\eps}s_{n}\right)-b_{0}(\xi)^{2}-Q_{S}[bb_{1}](\eta,\xi)\right]\nonumber\\
	&+\ii\eps^{3}\left[b_{0}(\xi)b_{1}(\eta)-b_{1}(\xi)b_{0}(\eta)\right]h_{1}\left(-\frac{2}{\eps}s_{n}\right)\nonumber\\
	&+\eps^{4}\left[\left(b_{0}(\xi)+b_{0}(\eta)\right)b_{2}(\eta)-b_{1}(\xi)b_{1}(\eta)-2b_{0}(\eta)b_{3}(\eta)s_{n}\right]h_{2}\left(-\frac{2}{\eps}s_{n}\right)\nonumber\\
	&+\ii\eps^{5}\left[\left(b_{0}(\eta)-b_{0}(\xi)\right)b_{3}(\eta)-\left(b_{1}(\eta)-b_{1}(\xi)\right)b_{2}(\eta)\right]h_{3}\left(-\frac{2}{\eps}s_{n}\right)
\end{align}
and
\begin{align}\label{Q2}
	\mathbf{Q}_{2}(\eta,\xi):=
	\begin{pmatrix}
		Q_{2}(\eta,\xi) & 0 \\
		0 & \overline{Q_{2}(\eta,\xi)} 
	\end{pmatrix}\period
\end{align}
Then there exists a constant $C\geq 0$ independent of $\varepsilon\in(0,\eps_{0}]$, $h$, and $n$ such that
\begin{align}\label{Q2_estimate}
	\lVert \mathbf{M}_{2}^{\eps}(\eta,\xi)-\mathbf{Q}_{2}(\eta,\xi)\rVert_{\infty} \leq C\eps h^{4}\max(\eps,h)\period
\end{align}
\end{Lemma}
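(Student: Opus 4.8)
The plan is to assemble $Q_2$ from the six pieces $J_1,\dots,J_6$ that were already prepared in (\ref{J1})--(\ref{J6}) and then to track the accumulated error. Since $\mathbf{M}_2^\eps$ is diagonal with complex-conjugate diagonal entries (because $\mathbf{N}^\eps$ and $\mathbf{M}_1^\eps$ are off-diagonal and Hermitian), it suffices to establish the scalar estimate $|m_2^\eps(\eta,\xi)-Q_2(\eta,\xi)|\le C\eps h^4\max(\eps,h)$ for the $(1,1)$-entry, with $\|\mathbf{M}_2^\eps-\mathbf{Q}_2\|_\infty$ then equal to this quantity; the $(2,2)$-entry follows by conjugation. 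Starting from the decomposition (\ref{m_2_approximation}), I would substitute the approximations (\ref{J1}), (\ref{J2}), (\ref{J3}), (\ref{J4}), (\ref{J5}), (\ref{J6}) and reorganize the result by grouping terms according to their power of $\eps$ and the auxiliary factor $h_p(-\frac{2}{\eps}s_n)$ they carry.

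The bookkeeping is the heart of the argument, and I would carry it out power by power. The $\eps^1$-term and the Simpson term $\eps^2 Q_S[bb_1]$ come directly from the non-oscillatory integrals $J_1$ and $J_3$. The remaining $\eps^2$-contribution is the boundary part of $J_2$; here one rewrites $\ee^{\frac{2\ii}{\eps}\phi(\xi)}\ee^{-\frac{2\ii}{\eps}\phi(\eta)}=h_0(-\frac{2}{\eps}s_n)$ to obtain the $h_0$-term together with $b_0(\xi)^2$. The $\eps^3 h_1$-term arises by adding the leading shifted parts of $J_2$ and $J_4$, which combine into $b_0(\xi)b_1(\eta)-b_1(\xi)b_0(\eta)$. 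The $\eps^4 h_2$-term collects the appropriate contributions of $J_2,J_4,J_5$ and $J_6$, giving $(b_0(\xi)+b_0(\eta))b_2(\eta)-b_1(\xi)b_1(\eta)-2b_0(\eta)b_3(\eta)s_n$, and finally the $\eps^5 h_3$-term combines the highest parts of the same four integrals into $(b_0(\eta)-b_0(\xi))b_3(\eta)-(b_1(\eta)-b_1(\xi))b_2(\eta)$. Throughout, one must keep careful track of the prefactors $(\ii\eps)^p$ and of the identity $\overline{h_p(x)}=h_p(-x)$ already used in (\ref{J2})--(\ref{J4}).

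It then remains to bound the total error. The error sources are: the leading remainder $\mathcal{O}_{\eps,h}(\eps^2 h^3\min(\eps,h))$ incurred when inserting $Q_1^{2,2}$ in (\ref{m_2_approximation}); the two Simpson errors $\mathcal{O}(\eps h^5)$ and $\mathcal{O}(\eps^2 h^5)$ from (\ref{J1}) and (\ref{J3}); and the remainders $\mathcal{O}_{\eps,h}(\eps^2 h^3\min(\eps,h))$ carried by (\ref{J2}), (\ref{J4}), (\ref{J5}), (\ref{J6}). Since $\eps,h<1$, each of these is dominated by $\eps h^4\max(\eps,h)$: one has $\eps^2 h^3\min(\eps,h)\le \eps h^4\max(\eps,h)$ (the two coincide when $\eps\ge h$), while $\eps h^5\le \eps h^4\max(\eps,h)$ and $\eps^2 h^5\le \eps h^4\max(\eps,h)$ follow from $h\le\max(\eps,h)$ and $\eps h\le\max(\eps,h)$. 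Summing the finitely many bounded contributions yields (\ref{Q2_estimate}).

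I expect the main obstacle to be organizational rather than conceptual: reliably matching the roughly dozen individual terms scattered across $J_2,\dots,J_6$ --- each carrying its own power of $(\ii\eps)$, sign, and $h_p$-factor --- against the five lines of (\ref{Q_2}) without arithmetic slips. A useful internal consistency check at each $\eps$-power is that the index of the surviving $h_p$ and the net $\eps$-power produced by the prefactors must agree across all contributing $J_i$. The one genuinely delicate point, already settled in the preparatory computations, is deciding where to truncate: the term $-2\eps^4 b_0(\eta)b_3(\eta)s_n\,h_2(-\frac{2}{\eps}s_n)$ of $J_6$ is of size $\mathcal{O}_{\eps,h}(\eps^2 h^2\min(\eps,h))$ and thus \emph{exceeds} the target bound, so it must be retained in $Q_2$, whereas the structurally similar $s_n$-weighted $h_3$-terms (such as the first term of $I$ in (\ref{I})) are only $\mathcal{O}_{\eps,h}(\eps^2 h^3\min(\eps,h))$ and may be absorbed into the remainder. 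Getting this threshold right is precisely what makes the final error land at $\eps h^4\max(\eps,h)$ rather than something larger.
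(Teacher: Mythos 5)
Your proposal is correct and follows essentially the same route as the paper: the paper's proof simply observes that $Q_{2}$ is defined by collecting the approximations (\ref{m_2_approximation})--(\ref{J4}), (\ref{J5}), (\ref{J6}) and dropping their remainders, and then notes that $\mathcal{O}_{\varepsilon,h}(\eps^{2}h^{3}\min(\eps,h))+\mathcal{O}_{\varepsilon,h}(\eps h^{5})+\mathcal{O}_{\varepsilon,h}(\eps^{2} h^{5})=\mathcal{O}_{\varepsilon,h}(\eps h^{4}\max(\eps,h))$, exactly as in your final paragraph. Your additional term-by-term bookkeeping and the remark on which $s_n$-weighted terms must be retained versus absorbed are consistent with the preparatory computations in Subsection~\ref{M2} and correctly reproduce the content the paper leaves implicit.
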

\begin{proof}
    Since $Q_{2}$ is defined precisely through the approximations (\ref{m_2_approximation})-(\ref{J4}), (\ref{J5}), and (\ref{J6}) by neglecting the $\mathcal{O}_{\eps,h}(\cdot)$-terms, estimate (\ref{Q2_estimate}) follows by noticing that $\mathcal{O}_{\varepsilon,h}(\eps^{2}h^{3}\min(\eps,h))+\mathcal{O}_{\varepsilon,h}(\eps h^{5})+\mathcal{O}_{\varepsilon,h}(\eps^{2} h^{5})=\mathcal{O}_{\varepsilon,h}(\eps h^{4}\max(\eps,h))$.
\end{proof}

\subsection{Approximation of the matrix \texorpdfstring{$\mathbf{M}_{3}^{\eps}$}{M3}}\label{M3}
%
Finally, in this subsection, we construct an approximation for the integral
\begin{align}\label{M3_matrix}
    \mathbf{M}_{3}^{\eps}(\eta;\xi)=\int_{\xi}^{\eta}\mathbf{N}^{\eps}(y)\mathbf{M}_{2}^{\eps}(y;\xi)\,\mathrm{d}y\period
\end{align}
While the desired error order for this approximation would be $\mathcal{O}_{\varepsilon,h}(\eps h^{3}\min(\eps,h))$ (see the discussion after (\ref{picard_remainder})), this will again not be possible here -- like in Subsection~\ref{M2}. Since the matrix $\mathbf{M}_{2}^{\eps}$ is diagonal, $\mathbf{M}_{3}^{\eps}$ is off-diagonal. We will now study $m_{3}^{\eps}(\eta,\xi):=\left(\mathbf{M}_{3}^{\eps}(\eta,\xi)\right)_{2,1}$ as the entry $\left(\mathbf{M}_{3}^{\eps}(\eta,\xi)\right)_{1,2}$ is just its complex conjugate.
To approximate $m_{3}^{\eps}$, we have to insert an approximation for $\mathbf{M}_{2}^{\eps}$ in the expression for $m_{3}^{\eps}$ and could of course simply use (\ref{Q2}). However, we will instead insert a weaker approximation as this will not result in a reduced error order for the resulting scheme and, at the same time, it leads to a shorter quadrature formula for $m_{3}^{\eps}$. The weaker approximation for $\mathbf{M}^{\eps}_{2}$ can be derived by using $\mathbf{Q}_{1}^{1,1}$ (see (\ref{Q1})) to approximate $\mathbf{M}_{1}^{\eps}$ and this was already used in \cite[Eq.\ (2.29)-(2.30)]{Arnold2011WKBBasedSF}:
\begin{align}\label{m2_weak}
	\left(\mathbf{M}_{2}^{\eps}(\eta;\xi)\right)_{1,1}=m_{2}^{\eps}(\eta,\xi)
    &=-(\ii\eps)\frac{\eta-\xi}{2}\left[b(\eta)b_{0}(\eta)+b(\xi)b_{0}(\xi)\right]\nonumber\\
    &\quad+(\ii\eps)^{2}b_{0}(\xi)b_{0}(\eta)h_{1}\left(\frac{2}{\eps}(\phi(\xi)-\phi(\eta))\right)\nonumber\\
    &\quad+(\ii\eps)^{3}b_{1}(\eta)\left[b_{0}(\eta)-b_{0}(\xi)\right]h_{2}\left(\frac{2}{\eps}(\phi(\xi)-\phi(\eta))\right)\nonumber\\
    &\quad+\mathcal{O}_{\varepsilon,h}(\eps h^{3})
\end{align}
%
%
Inserting the approximation (\ref{m2_weak}) of $m_{2}^{\eps}(y,\xi)$ for $\left(\mathbf{M}_{2}^{\eps}(y;\xi)\right)_{1,1}$ thus yields
\begin{align}
	m_{3}^{\eps}(\eta,\xi)&=\int_{\xi}^{\eta}\left(\mathbf{N}^{\eps}(y)\right)_{2,1}\left(\mathbf{M}_{2}^{\eps}(y;\xi)\right)_{1,1}\,\dd y\nonumber\\
    &=K_{1}+K_{2}+K_{3}+K_{4}+K_{5}+\mathcal{O}_{\varepsilon,h}(\eps h^{4})\Comma
\end{align}
where
\begin{align}
	K_{1}&:=-\frac{(\ii\eps)}{2}\int_{\xi}^{\eta}b(y)^{2}b_{0}(y)(y-\xi)\ee^{\frac{2\ii}{\eps}\phi(y)}\,\dd y\Comma\nonumber\\
	K_{2}&:=-\frac{(\ii\eps)}{2}b(\xi)b_{0}(\xi)\int_{\xi}^{\eta}b(y)(y-\xi)\ee^{\frac{2\ii}{\eps}\phi(y)}\,\dd y\Comma\nonumber\\
	K_{3}&:=(\ii\eps)^{2}b_{0}(\xi)\int_{\xi}^{\eta}b(y)b_{0}(y)\ee^{\frac{2\ii}{\eps}\phi(y)}h_{1}\left(\frac{2}{\eps}(\phi(\xi)-\phi(y))\right)\,\dd y\Comma\nonumber\\
	K_{4}&:=-(\ii\eps)^{3}b_{0}(\xi)\int_{\xi}^{\eta}b(y)b_{1}(y)\ee^{\frac{2\ii}{\eps}\phi(y)}h_{2}\left(\frac{2}{\eps}(\phi(\xi)-\phi(y))\right)\,\dd y\Comma\nonumber\\
	K_{5}&:=(\ii\eps)^{3}\int_{\xi}^{\eta}b(y)b_{0}(y)b_{1}(y)\ee^{\frac{2\ii}{\eps}\phi(y)}h_{2}\left(\frac{2}{\eps}(\phi(\xi)-\phi(y))\right)\,\dd y\period\nonumber
\end{align}
For the approximation of the integrals $K_{i}$, $i=1,\dots, 5$ it is convenient to introduce the abbreviations
\begin{align}
     c_{0}(y)&:=\frac{b(y)^{2}b_{0}(y)}{2\phi^{\prime}(y)}\Comma \quad c_{1}(y):=\frac{c_{0}^{\prime}(y)}{2\phi^{\prime}(y)}\Comma \quad d_{0}(y):=\frac{c_{0}(y)}{2\phi^{\prime}(y)}\Comma\label{c0_etc}\\
     d_{1}(y)&:=\frac{d_{0}^{\prime}(y)}{2\phi^{\prime}(y)}\Comma\quad e_{0}(y):=\frac{c_{1}(y)}{2\phi^{\prime}(y)}\Comma\quad f_{0}(y):=\frac{b_{0}(y)}{2\phi^{\prime}(y)}\Comma\label{d1_etc}\\
     f_{1}(y)&:=\frac{f_{0}^{\prime}(y)}{2\phi^{\prime}(y)}\Comma \quad g_{0}(y):=\frac{b_{1}(y)}{2\phi^{\prime}(y)}\Comma \quad \kappa(y):=b(y)b_{1}(y)\Comma\label{f1_etc}\\
     \kappa_{0}(y)&:=\frac{\kappa(y)}{2\phi^{\prime}(y)}\Comma\quad l(y):=b(y)b_{0}(y)b_{1}(y)\Comma\quad l_{0}(y):=\frac{l(y)}{2\phi^{\prime}(y)}\period\label{kappa0_etc}
\end{align}
Note that all of these functions are uniformly bounded w.r.t.\ $\eps$ (with $0<\eps\leq \eps_{0}$ for some sufficiently small $\eps_{0}$) on the interval $I$. Indeed, this is again guaranteed by Hypothesis~\ref{hypothesis_A}. 

Now, let us start with the approximation of $K_{1}$. By making three SAM-steps, we find
\begin{align}\label{K1}
	K_{1}
	&=\frac{(\ii\eps)^{2}}{2}\ee^{\frac{2\ii}{\eps}\phi(\xi)}\Bigg\{c_{0}(\eta)(\eta-\xi)h_{1}\left(\frac{2}{\eps}s_{n}\right)\nonumber\\
	&\quad+(\ii\eps)\int_{\xi}^{\eta}\left(c_{1}(y)(y-\xi)+d_{0}(y)\right)\frac{\dd}{\dd y}\left[h_{2}\left(\frac{2}{\eps}(\phi(y)-\phi(\xi))\right)\right]\,\dd y\Bigg\}\nonumber\\
	&=\frac{(\ii\eps)^{2}}{2}\ee^{\frac{2\ii}{\eps}\phi(\xi)}\Bigg\{c_{0}(\eta)(\eta-\xi)h_{1}\left(\frac{2}{\eps}s_{n}\right)\nonumber\\
	&\quad+(\ii\eps)\left(c_{1}(\eta)(\eta-\xi)+d_{0}(\eta)\right)h_{2}\left(\frac{2}{\eps}s_{n}\right)\nonumber\\
	&\quad+(\ii\eps)^{2}\int_{\xi}^{\eta}\left(\frac{c_{1}'(y)}{2\phi'(y)}(y-\xi)+e_{0}(y)+d_{1}(y)\right)\frac{\dd}{\dd y}\left[h_{3}\left(\frac{2}{\eps}(\phi(y)-\phi(\xi))\right)\right]\,\dd y\Bigg\}\nonumber\\
	&=\frac{(\ii\eps)^{2}}{2}\ee^{\frac{2\ii}{\eps}\phi(\xi)}\Bigg\{c_{0}(\eta)(\eta-\xi)h_{1}\left(\frac{2}{\eps}s_{n}\right)\nonumber\\
	&\quad+(\ii\eps)\left(c_{1}(\eta)(\eta-\xi)+d_{0}(\eta)\right)h_{2}\left(\frac{2}{\eps}s_{n}\right)\nonumber\\
	&\quad+(\ii\eps)^{2}\left(e_{0}(\eta)+d_{1}(\eta)\right)h_{3}\left(\frac{2}{\eps}s_{n}\right)
    \Bigg\}
	+\mathcal{O}_{\varepsilon,h}(\eps h^{3}\min(\eps,h))\period
\end{align}
%
%
Next, the integral $K_{2}$ can be treated in the same way as $K_{1}$, i.e., by making three SAM-steps:
\begin{align}\label{K2}
	K_{2}&=\frac{(\ii\eps)^{2}}{2}b(\xi)b_{0}(\xi)\ee^{\frac{2\ii}{\eps}\phi(\xi)}\Bigg\{b_{0}(\eta)(\eta-\xi)h_{1}\left(\frac{2}{\eps}s_{n}\right)\nonumber\\
	&\quad+(\ii\eps)\left(b_{1}(\eta)(\eta-\xi)+f_{0}(\eta)\right)h_{2}\left(\frac{2}{\eps}s_{n}\right)\nonumber\\
	&\quad+(\ii\eps)^{2}\left(g_{0}(\eta)+f_{1}(\eta)\right)h_{3}\left(\frac{2}{\eps}s_{n}\right)\Bigg\}+\mathcal{O}_{\varepsilon,h}(\eps h^{3}\min(\eps,h))\period
\end{align}
For the approximation of $K_{3}$ we first make use of (\ref{help1}).  
Then, we make two SAM-steps, yielding
\begin{align}\label{K3}
	K_{3}&=-(\ii\eps)^{2}b_{0}(\xi)\ee^{\frac{2\ii}{\eps}\phi(\xi)}\int_{\xi}^{\eta}b(y)b_{0}(y)h_{1}\left(\frac{2}{\eps}(\phi(y)-\phi(\xi))\right)\,\dd y\nonumber\\
	&=(\ii\eps)^{3}b_{0}(\xi)\ee^{\frac{2\ii}{\eps}\phi(\xi)}\Bigg\{b_{0}(\eta)^{2}h_{2}\left(\frac{2}{\eps}s_{n}\right)\nonumber\\
	&\quad+2(\ii\eps)b_{0}(\eta)b_{1}(\eta)h_{3}\left(\frac{2}{\eps}s_{n}\right)\Bigg\}+\mathcal{O}_{\varepsilon,h}(\eps h^{3}\min(\eps,h))\period
\end{align}
In order to construct an approximation of $K_{4}$, we use a similar splitting as for $J_{6}$ in Subsection~\ref{M2}. Indeed, applying the identity
\begin{align}\label{help3}
	\ee^{\frac{2\ii}{\eps}\phi(y)}h_{2}\left(\frac{2}{\eps}(\phi(\xi)-\phi(y))\right)&=-\ee^{\frac{2\ii}{\eps}\phi(\xi)}h_{2}\left(\frac{2}{\eps}(\phi(y)-\phi(\xi))\right)\nonumber\\&\quad+\ee^{\frac{2\ii}{\eps}\phi(\xi)}\frac{2\ii}{\eps}\left(\phi(y)-\phi(\xi)\right)h_{1}\left(\frac{2}{\eps}(\phi(y)-\phi(\xi))\right)\Comma
\end{align}
we can write $K_{4}=\widetilde{K}_{4}+\widehat{K}_{4}$, where
\begin{align}\label{K4tilde}
	\widetilde{K}_{4}&:=(\ii\eps)^{3}b_{0}(\xi)\ee^{\frac{2\ii}{\eps}\phi(\xi)}\int_{\xi}^{\eta}\kappa(y)h_{2}\left(\frac{2}{\eps}(\phi(y)-\phi(\xi))\right)\,\dd y\nonumber\\
	&=-(\ii\eps)^{4}b_{0}(\xi)\ee^{\frac{2\ii}{\eps}\phi(\xi)}\kappa_{0}(\eta)h_{3}\left(\frac{2}{\eps}s_{n}\right)+\mathcal{O}_{\varepsilon,h}\left(\eps h^{3}\min(\eps,h)\right)\Comma
\end{align}
and
\begin{align}\label{K4hat}
	\widehat{K}_{4}&:=2(\ii\eps)^{2}b_{0}(\xi)\ee^{\frac{2\ii}{\eps}\phi(\xi)}\int_{\xi}^{\eta}\kappa(y)\left(\phi(y)-\phi(\xi)\right)h_{1}\left(\frac{2}{\eps}(\phi(y)-\phi(\xi))\right)\,\dd y\nonumber\\
	&=-2(\ii\eps)^{3}b_{0}(\xi)\ee^{\frac{2\ii}{\eps}\phi(\xi)}\Bigg\{\kappa_{0}(\eta)s_{n}h_{2}\left(\frac{2}{\eps}s_{n}\right)\nonumber\\
	&\quad+(\ii\eps)\frac{1}{2}\kappa_{0}(\eta)h_{3}\left(\frac{2}{\eps}s_{n}\right)\Bigg\}+\mathcal{O}_{\varepsilon,h}\left(\eps h^{3}\min(\eps,h)\right)\period
\end{align}
Here, we made one SAM-step for $\widetilde{K}_{4}$ and treated $\widehat{K}_{4}$ in the same way as $\widehat{J}_{6}$ in the previous subsection.
Thus, by combining (\ref{K4tilde}) and (\ref{K4hat}), we obtain
\begin{align}\label{K4}
	K_{4}&=-2(\ii\eps)^{3}b_{0}(\xi)\ee^{\frac{2\ii}{\eps}\phi(\xi)}\Bigg\{\kappa_{0}(\eta)s_{n}h_{2}\left(\frac{2}{\eps}s_{n}\right)+(\ii\eps)\kappa_{0}(\eta)h_{3}\left(\frac{2}{\eps}s_{n}\right)\Bigg\}\nonumber\\
    &\quad+\mathcal{O}_{\varepsilon,h}\left(\eps h^{3}\min(\eps,h)\right)\period
\end{align}
Finally, the approximation of $K_{5}$ can be derived in the same way as the one of $K_{4}$, by noticing that $l$ and $l_{0}$ now play the roles of $\kappa$ and $\kappa_{0}$, respectively. Therefore, it holds that
\begin{align}\label{K5}
		K_{5}&=2(\ii\eps)^{3}\ee^{\frac{2\ii}{\eps}\phi(\xi)}\Bigg\{l_{0}(\eta)s_{n}h_{2}\left(\frac{2}{\eps}s_{n}\right)+(\ii\eps)l_{0}(\eta)h_{3}\left(\frac{2}{\eps}s_{n}\right)\Bigg\}\nonumber\\
        &\quad+\mathcal{O}_{\varepsilon,h}\left(\eps h^{3}\min(\eps,h)\right)\period
\end{align}
We close this subsection with the following lemma, which summarizes the approximations (\ref{K1})-(\ref{K3}), (\ref{K4}), and (\ref{K5}).
\begin{Lemma}\label{lemma_m3}
	Let Hypothesis \ref{hypothesis_A} be satisfied and define
	\begin{align}\label{Q_3}
		&Q_{3}(\eta,\xi):=-\eps^{2}\ee^{\frac{2\ii}{\eps}\phi(\xi)}\frac{\eta-\xi}{2}\left(c_{0}(\eta)+b(\xi)b_{0}(\xi)b_{0}(\eta)\right)h_{1}\left(\frac{2}{\eps}s_{n}\right)\nonumber\\
	  &-\ii\eps^{3}\ee^{\frac{2\ii}{\eps}\phi(\xi)}\Bigg[\frac{1}{2}\left[c_{1}(\eta)(\eta-\xi)+d_{0}(\eta)+b(\xi)b_{0}(\xi)\left(b_{1}(\eta)(\eta-\xi)+f_{0}(\eta)\right)\right]\nonumber\\
		&\quad\quad\quad\quad\quad\quad+b_{0}(\xi)b_{0}(\eta)^{2}+2s_{n}\left(l_{0}(\eta)-b_{0}(\xi)\kappa_{0}(\eta)\right)\Bigg]h_{2}\left(\frac{2}{\eps}s_{n}\right)\nonumber\\
		&+\eps^{4}\ee^{\frac{2\ii}{\eps}\phi(\xi)}\Bigg[\frac{1}{2}\left[e_{0}(\eta)+d_{1}(\eta)+b(\xi)b_{0}(\xi)\left(g_{0}(\eta)+f_{1}(\eta)\right)\right]\nonumber\\
		&\quad\quad\quad\quad\quad\;+2\left[b_{0}(\xi)b_{0}(\eta)b_{1}(\eta)+\left(l_{0}(\eta)-b_{0}(\xi)\kappa_{0}(\eta)\right)\right]\Bigg]h_{3}\left(\frac{2}{\eps}s_{n}\right)
	\end{align}
	and
	\begin{align}\label{Q3}
		\mathbf{Q}_{3}(\eta,\xi):=
		\begin{pmatrix}
			0 & \overline{Q_{3}(\eta,\xi)} \\ 
			Q_{3}(\eta,\xi) & 0
		\end{pmatrix}\period
	\end{align}
	Then there exists a constant $C\geq 0$ independent of $\varepsilon\in(0,\eps_{0}]$, $h$, and $n$ such that
	\begin{align}
		\lVert \mathbf{M}_{3}^{\eps}(\eta,\xi)-\mathbf{Q}_{3}(\eta,\xi)\rVert_{\infty} \leq C\eps h^{4}\period
	\end{align}
	
\end{Lemma}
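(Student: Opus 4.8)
The plan is to mirror the proof of Lemma~\ref{lemma_m2}: the quadrature $Q_3$ in \eqref{Q_3} is by construction nothing but the sum of the leading (non-remainder) terms of the five integrals $K_1,\dots,K_5$, and the asserted bound will follow by collecting all discarded $\mathcal{O}_{\eps,h}(\cdot)$-contributions. I would first record the structural reduction: since $\mathbf{M}_2^\eps$ is diagonal, $\mathbf{M}_3^\eps=\int_\xi^\eta \mathbf{N}^\eps(y)\mathbf{M}_2^\eps(y;\xi)\,\dd y$ from \eqref{M3_matrix} is off-diagonal, and its $(1,2)$-entry is the complex conjugate of $m_3^\eps(\eta,\xi)=(\mathbf{M}_3^\eps(\eta,\xi))_{2,1}$. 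Hence it suffices to bound $|m_3^\eps(\eta,\xi)-Q_3(\eta,\xi)|$, after which the matrix estimate follows from the definition \eqref{Q3}. The crucial first step is to insert the \emph{weaker} approximation \eqref{m2_weak} for $(\mathbf{M}_2^\eps(y;\xi))_{1,1}$ rather than the sharper $Q_2$: its remainder $\mathcal{O}_{\eps,h}(\eps h^3)$, integrated against the bounded oscillatory factor $b(y)\ee^{\frac{2\ii}{\eps}\phi(y)}$ over the interval $[\xi,\eta]$ of length $\eta-\xi=\mathcal{O}(h)$, contributes exactly $\mathcal{O}_{\eps,h}(\eps h^4)$ --- this is the term that ultimately pins down the error order of the lemma.

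Next I would split $m_3^\eps$ into the five integrals $K_1,\dots,K_5$ as in the text and invoke the already-derived SAM approximations \eqref{K1}--\eqref{K5}, each of which carries a remainder of order $\mathcal{O}_{\eps,h}(\eps h^3\min(\eps,h))$. The core verification is then purely algebraic: gathering the retained terms by their common oscillatory factor $h_p(\tfrac{2}{\eps}s_n)$, $p=1,2,3$, and by powers of $(\ii\eps)$, one must check that the result coincides line by line with \eqref{Q_3}. Concretely, the $h_1$-contributions arise only from $K_1$ and $K_2$ and combine (using $(\ii\eps)^2=-\eps^2$) to the first line of \eqref{Q_3}; the $h_2$-contributions collect the $(\ii\eps)^3$-terms of all five integrals, with the $s_n$-weighted pieces $2s_n(l_0(\eta)-b_0(\xi)\kappa_0(\eta))$ stemming from $K_4$ and $K_5$; and the $h_3$-contributions gather the $(\ii\eps)^4=\eps^4$-terms, producing the last two lines of \eqref{Q_3}. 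I would keep careful track of the signs induced by the powers of $\ii$.

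Finally, the error bookkeeping: the total deviation $m_3^\eps-Q_3$ is the sum of the insertion remainder $\mathcal{O}_{\eps,h}(\eps h^4)$ from \eqref{m2_weak} and the five remainders $\mathcal{O}_{\eps,h}(\eps h^3\min(\eps,h))$ from \eqref{K1}--\eqref{K5}. Since $\eps h^3\min(\eps,h)\le \eps h^4$ for all admissible $(\eps,h)$, every contribution is dominated by $\eps h^4$, and the desired bound $\lVert\mathbf{M}_3^\eps-\mathbf{Q}_3\rVert_\infty\le C\eps h^4$ follows upon passing back to the matrix via \eqref{Q3}.

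I expect the main obstacle not to lie in the lemma's bookkeeping --- which is routine once \eqref{K1}--\eqref{K5} are in hand --- but rather in the SAM derivations feeding into it, in particular the approximations \eqref{K4} and \eqref{K5}: these require the splitting identity \eqref{help3} and the treatment of the $\widehat K_4$-type integral in exact analogy with $\widehat J_6$ from Subsection~\ref{M2}, where one must argue via \eqref{h_p_order} that the intermediate SAM by-products are already of order $\mathcal{O}_{\eps,h}(\eps h^3\min(\eps,h))$ and may be dropped. The one point demanding genuine care within the lemma itself is justifying that the weaker approximation \eqref{m2_weak} suffices: one must confirm that replacing $Q_2$ by \eqref{m2_weak} does not spoil the $\eps h^4$ order while, as noted in the text, yielding a substantially shorter quadrature formula.
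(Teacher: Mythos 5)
Your proposal is correct and follows essentially the same route as the paper: the paper gives no separate proof of Lemma~\ref{lemma_m3} precisely because, as in the one-line proof of Lemma~\ref{lemma_m2}, $Q_{3}$ is defined by summing the retained terms of the SAM approximations (\ref{K1})--(\ref{K3}), (\ref{K4}), (\ref{K5}) after inserting the weak approximation (\ref{m2_weak}), and the error bound follows from $\mathcal{O}_{\eps,h}(\eps h^{4})+\mathcal{O}_{\eps,h}(\eps h^{3}\min(\eps,h))=\mathcal{O}_{\eps,h}(\eps h^{4})$. Your term-by-term collection by the factors $h_{p}\left(\frac{2}{\eps}s_{n}\right)$ and the sign bookkeeping via $(\ii\eps)^{2}=-\eps^{2}$, $(\ii\eps)^{3}=-\ii\eps^{3}$, $(\ii\eps)^{4}=\eps^{4}$ reproduces (\ref{Q_3}) exactly.
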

\section{Numerical scheme and error analysis}\label{chap:scheme}
%
Based on the Picard approximation (\ref{picard3}) as well as the quadratures (\ref{Q1}), (\ref{Q2}), and (\ref{Q3}) we can now define the numerical scheme: Let
\begin{align}\label{scheme_matrices}
    \mathbf{A}^{1}_{n}:=\eps\mathbf{Q}_{1}^{3,3}(x_{n+1},x_{n})\Comma\quad\mathbf{A}^{2}_{n}:=\eps^{2}\mathbf{Q}_{2}(x_{n+1},x_{n})\Comma\quad\mathbf{A}^{3}_{n}:=\eps^{3}\mathbf{Q}_{3}(x_{n+1},x_{n})\period
\end{align}
Given the initial value $Z_{0}$ we define
\begin{align}\label{scheme}
Z_{n+1}:=\left(\mathbf{I}+\mathbf{A}^{1}_{n}+\mathbf{A}^{2}_{n}+\mathbf{A}^{3}_{n}\right)Z_{n}\Comma\quad n=0,\dots,N-1\period
\end{align}
\anton{Due to the detailed expressions of the three above matrices, as given in Section~\ref{chap:construction_scheme}, we shall assume for this scheme that the function $a$ and its derivatives up to order 7 are explicitly available.}

The numerical solution of (\ref{Usystem}) can then be obtained through the inverse transform (\ref{U_inverse}):
\begin{align}\label{U_inverse_num}
    U_{n}:=\mathbf{P}^{-1}\exp\left(\frac{\ii}{\varepsilon}\mathbf{\Phi}^{\varepsilon}(x_{n})\right)Z_{n}\Comma\quad n=0,\dots,N\period
\end{align}
The method (\ref{scheme})-(\ref{U_inverse_num}) satisfies the following global error estimates:
\begin{Theorem}\label{thm_main}
Let Hypothesis \ref{hypothesis_A} be satisfied, \anton{and let the phase \eqref{phase} be computable analytically. } Let $Z$ and $U$ be the exact solutions of the IVPs (\ref{Zsystem}) and (\ref{Usystem}), respectively. Then, for $Z_{n}$ and $U_{n}$ being computed through the scheme (\ref{scheme})-(\ref{U_inverse_num}), there exists a generic constant $C\geq 0$ independent of $\eps\in(0,\eps_{0}]$, $h$, and $n$ such that
\begin{align}\label{global_error_Z}
	\lVert Z(x_{n})-Z_{n}\rVert_{\infty} 
	&\leq  C \eps^{3}h^{3}\max(\eps,h)\Comma\quad n=0,\dots,N\Comma
\end{align}
\begin{align}\label{global_error_U}
	\lVert U(x_{n})-U_{n}\rVert_{\infty} 
	\leq C\eps^{3}h^{3}\max(\eps,h)\Comma\quad n=0,\dots,N\period
\end{align}
\end{Theorem}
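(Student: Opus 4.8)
The plan is to establish the local (one-step) consistency error first, and then propagate it to the global error via a standard stability argument for linear one-step recursions. For the local error, I would compare one exact step of the $Z$-dynamics against one step of the scheme \eqref{scheme}. Combining the Picard expansion \eqref{picard3} with the three quadrature lemmas (Lemma~\ref{lemma_m1} with $P=\widetilde{P}=3$, Lemma~\ref{lemma_m2}, and Lemma~\ref{lemma_m3}), the one-step map $\mathbf{I}+\mathbf{A}_n^1+\mathbf{A}_n^2+\mathbf{A}_n^3$ approximates the exact propagator of \eqref{Zsystem} from $x_n$ to $x_{n+1}$. The key bookkeeping step is to add up the individual error contributions: the remainder of the truncated Picard series is $\mathcal{O}_{\eps,h}(\eps^4 h^3\min(\eps,h))$; the $\eps\mathbf{Q}_1^{3,3}$-error is $\eps\cdot\mathcal{O}_{\eps,h}(\eps^3 h^3\min(\eps,h))$; the $\eps^2\mathbf{Q}_2$-error is $\eps^2\cdot\mathcal{O}_{\eps,h}(\eps h^4\max(\eps,h))$; and the $\eps^3\mathbf{Q}_3$-error is $\eps^3\cdot\mathcal{O}_{\eps,h}(\eps h^4)$. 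I would verify that each of these is dominated by $\eps^3 h^4\max(\eps,h)$, so that the local error of one step is
\begin{align}
\lVert Z(x_{n+1})-\big(\mathbf{I}+\mathbf{A}_n^1+\mathbf{A}_n^2+\mathbf{A}_n^3\big)Z(x_n)\rVert_\infty \leq C\,\eps^3 h^4\max(\eps,h)\period \nonumber
\end{align}

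Next I would turn the local error into a global one. Writing $e_n:=Z(x_n)-Z_n$ and subtracting the scheme \eqref{scheme} from the exact relation $Z(x_{n+1})=(\mathbf{I}+\mathbf{A}_n^1+\mathbf{A}_n^2+\mathbf{A}_n^3)Z(x_n)+\mathrm{(loc.err.)}_n$, one obtains the recursion $e_{n+1}=(\mathbf{I}+\mathbf{A}_n^1+\mathbf{A}_n^2+\mathbf{A}_n^3)e_n+\mathrm{(loc.err.)}_n$. The crucial stability ingredient is a uniform bound $\lVert \mathbf{I}+\mathbf{A}_n^1+\mathbf{A}_n^2+\mathbf{A}_n^3\rVert_\infty\leq 1+Ch$ (or even $\leq 1+C\eps h$, exploiting that $\mathbf{A}_n^1=\eps\mathbf{Q}_1^{3,3}$ carries an explicit $\eps$-prefactor and the leading $\mathbf{Q}_1$-term is itself $\mathcal{O}(\eps h)$ near $x_n$). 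Such a bound follows from the explicit forms of $\mathbf{Q}_1^{3,3},\mathbf{Q}_2,\mathbf{Q}_3$ together with the uniform boundedness of $b$ and all the auxiliary functions $b_p, c_p, d_p,\dots$ guaranteed by Hypothesis~\ref{hypothesis_A}. Discrete Gronwall then yields, with $N h\leq x_{end}-x_0$ fixed,
\begin{align}
\lVert e_n\rVert_\infty \leq \ee^{C(x_{end}-x_0)}\, N\cdot C\eps^3 h^4\max(\eps,h) \leq \widetilde{C}\,\eps^3 h^3\max(\eps,h)\period \nonumber
\end{align}
This gives \eqref{global_error_Z}. For \eqref{global_error_U} I would apply the inverse transform \eqref{U_inverse_num}: since $\mathbf{P}^{-1}\exp(\tfrac{\ii}{\eps}\mathbf{\Phi}^\eps(x_n))$ is unitary (because $\mathbf{P}$ is unitary and $\mathbf{\Phi}^\eps$ is real diagonal), it preserves the norm, so $\lVert U(x_n)-U_n\rVert_\infty$ inherits the same bound as $\lVert Z(x_n)-Z_n\rVert_\infty$ up to a fixed constant from the equivalence of matrix norms.

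The assumption that the phase \eqref{phase} is computable analytically is what makes the $s_n$ and the oscillatory factors $\ee^{\pm 2\ii\phi/\eps}$ exact in the scheme, so no additional error from phase quadrature enters; I would note this explicitly, as it is precisely the hypothesis under which the stated orders hold. The main obstacle I anticipate is not the Gronwall propagation itself but the careful aggregation of the six mixed-order error terms of differing $(\eps,h)$-scalings into the single clean bound $\eps^3 h^4\max(\eps,h)$, and especially confirming the uniform stability constant: one must check that summing $\mathcal{O}(\min(\eps,h))$- and $\mathcal{O}(\max(\eps,h))$-type contributions and multiplying by the $\eps^p$-prefactors never produces a term exceeding $\eps^3 h^4\max(\eps,h)$, using identities such as $\eps h\min(\eps,h)=\eps h\cdot\eps$ or $h^2$ as appropriate and $\min(\eps,h)\max(\eps,h)=\eps h$. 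A secondary subtlety is verifying that the weaker approximation \eqref{m2_weak} used inside $m_3^\eps$ does not degrade the final order below the target — which the construction in Subsection~\ref{M3} was designed to guarantee, since it enters $\mathbf{M}_3^\eps$ multiplied by the extra $\eps$ from the outer integration.
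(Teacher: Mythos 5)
Your proposal is correct and follows essentially the same route as the paper's own proof: consistency via the Picard remainder \eqref{picard_remainder} plus the three quadrature error bounds from Lemmas~\ref{lemma_m1} (with $P=\widetilde{P}=3$), \ref{lemma_m2}, and \ref{lemma_m3}, aggregated to a local error of $\mathcal{O}_{\eps,h}(\eps^{3}h^{4}\max(\eps,h))$; stability from the uniform $\mathcal{O}_{\eps,h}(\eps h)$-type bounds on $\mathbf{A}_n^{1},\mathbf{A}_n^{2},\mathbf{A}_n^{3}$; and the $U$-estimate via unitarity of $\mathbf{P}^{-1}\exp(\frac{\ii}{\eps}\mathbf{\Phi}^{\eps}(x_n))$. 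Your explicit discrete-Gronwall bookkeeping and the verification that each mixed-order term is dominated by $\eps^{3}h^{4}\max(\eps,h)$ are exactly the (implicit) content of the paper's argument.
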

\begin{proof}
From the definitions (\ref{Q_1}), (\ref{Q_2}), and (\ref{Q_3}) it is evident that
\begin{align}
	\lVert \mathbf{A}^{1}_{n}\rVert_{\infty} &= \lVert \eps\mathbf{Q}_{1}^{3,3}(x_{n+1},x_{n})\rVert_{\infty} = \mathcal{O}_{\varepsilon,h}(\eps\min(\eps,h))\Comma\nonumber\\
	\lVert \mathbf{A}^{2}_{n}\rVert_{\infty} &= \lVert \eps^{2}\mathbf{Q}_{2}(x_{n+1},x_{n})\rVert_{\infty} = \mathcal{O}_{\varepsilon,h}(\eps^{3}h)\Comma\nonumber\\
	\lVert \mathbf{A}^{3}_{n}\rVert_{\infty} &= \lVert \eps^{3}\mathbf{Q}_{3}(x_{n+1},x_{n})\rVert_{\infty} = \mathcal{O}_{\varepsilon,h}(\eps^{4} h\min(\eps,h))\Comma
\end{align}
which implies that the one-step method (\ref{scheme}) is stable, with an $\varepsilon$-independent stability constant.
Further, due to (\ref{picard3}), the consistency error for $n=0,\dots,N-1$ reads
	\begin{align}\label{consistency_error}
		e_{n}&:=Z(x_{n+1})-\left(\mathbf{I}+\mathbf{A}^{1}_{n}+\mathbf{A}^{2}_{n}+\mathbf{A}^{3}_{n}\right)Z(x_{n})\nonumber\\
		&=Z(x_{n})+\left[\eps \mathbf{M}_{1}^{\eps}(x_{n+1};x_{n})+\eps^{2} \mathbf{M}_{2}^{\eps}(x_{n+1};x_{n})+\eps^{3} \mathbf{M}_{3}^{\eps}(x_{n+1};x_{n})\right]Z(x_{n})\nonumber\\
		&\quad-\left(\mathbf{I}+\mathbf{A}^{1}_{n}+\mathbf{A}^{2}_{n}+\mathbf{A}^{3}_{n}\right)Z(x_{n})+\mathcal{O}_{\eps,h}(\eps^{4}h^{3}\min(\eps,h))\nonumber\\
		&=\left[\eps\mathbf{M}_{1}^{\eps}(x_{n+1};x_{n})-\mathbf{A}^{1}_{n}+\eps^{2}\mathbf{M}_{2}^{\eps}(x_{n+1};x_{n})-\mathbf{A}^{2}_{n}+\eps^{3}\mathbf{M}_{3}^{\eps}(x_{n+1};x_{n})-\mathbf{A}^{3}_{n}\right]Z(x_{n})\nonumber\\
		&\quad+\mathcal{O}_{\eps,h}(\eps^{4}h^{3}\min(\eps,h))\Comma
	\end{align}
Now, Lemma~\ref{lemma_m1}, Lemma~\ref{lemma_m2}, and Lemma~\ref{lemma_m3} imply
\begin{align}
	\eps\mathbf{M}_{1}^{\eps}(x_{n+1};x_{n})-\mathbf{A}^{1}_{n}&=\mathcal{O}_{\varepsilon,h}(\eps^{4}h^{3}\min(\eps,h))\Comma\nonumber\\
	\eps^{2}\mathbf{M}_{2}^{\eps}(x_{n+1};x_{n})-\mathbf{A}^{2}_{n}&=\mathcal{O}_{\varepsilon,h}(\eps^{3}h^{4}\max(\eps,h))\Comma\nonumber\\
	\eps^{3}\mathbf{M}_{3}^{\eps}(x_{n+1};x_{n})-\mathbf{A}^{3}_{n}&=\mathcal{O}_{\eps,h}(\eps^{4}h^{4})\Comma
\end{align}
which, together with (\ref{consistency_error}), yields
\begin{align}
	e_{n}=\mathcal{O}_{\varepsilon,h}(\eps^{3}h^{4}\max(\eps,h))\period
\end{align}
Hence, the one-step method (\ref{scheme}) is consistent and therefore convergent with the global error estimate (\ref{global_error_Z}). Estimate (\ref{global_error_U}) now follows from (\ref{global_error_Z}) and the inverse transformation (\ref{U_inverse}), using the unitarity of the matrices $\mathbf{P}^{-1}$ and $\exp\left(\frac{\ii}{\eps}\mathbf{\Phi}^{\eps}\right)$. This proves the claim.
\end{proof}
\begin{Remark}\label{remark_h4}
	We emphasize that, in practical applications, when solving IVP (\ref{schroedinger_IVP}) using a WKB-based method like the one presented above, one may want to use a grid size $h \gg\eps$. In such scenarios the global error of the third order scheme (\ref{scheme}) is even proportional to $h^{4}$, according to the estimates (\ref{global_error_Z}) and (\ref{global_error_U}). This property is observable in Figure~\ref{plot:airy_err_vs_h_vpa} of the next section. 
\end{Remark}
%
%
\subsection{Refined error estimate incorporating phase errors}\label{sec:numphase}
\anton{In 
Theorem~\ref{thm_main} we 
assumed} that the phase (\ref{phase}) is exactly available. Indeed, this is the case in several relevant applications, e.g., for so-called RTD-models (e.g., see \cite{Mennemann2013TransientSS,Sun1998ResonantTD}), where the coefficient function $a$ is typically piecewise linear. Thus, in such scenarios the method (\ref{scheme})-(\ref{U_inverse_num}) is asymptotically correct w.r.t.\ $\eps$.

In general, however, the integral within the definition of the phase (\ref{phase}) cannot be expected to be exactly computable. In \cite{Arnold2022WKBmethodFT}, the authors extended the error analysis of the numerical methods from \cite{Arnold2011WKBBasedSF} to the case of a numerically computed phase. We emphasize that, with the exact same strategy, the estimates (\ref{global_error_Z}) and (\ref{global_error_U}) can be generalized. To this end, let us first collect the basic assumptions from \cite{Arnold2022WKBmethodFT}.

Firstly, for an approximate phase $\tilde{\phi}\approx \phi$ we write $\tilde{\phi}(x)=\tilde{\phi}_{1}(x)-\eps^{2}\tilde{\phi}_{2}(x)$, with $\tilde{\phi}_{1}(x)$ and $\tilde{\phi}_{2}(x)$ being numerical approximations to $\int_{x_{0}}^{x}\sqrt{a(y)}\,\mathrm{d}y$ and $\int_{x_{0}}^{x}b(y)\,\mathrm{d}y$, respectively. Then we impose the following assumption:
\begin{Hypothesis}\label{hypothesis_B}
	Let $\tilde{\phi}_{1}, \tilde{\phi}_{2}\in C^{6}\left(I\right)$ and $\tilde{\phi}^{\prime}_{1}\geq \widetilde{C} > 0$.
\end{Hypothesis}
In particular, Hypothesis~\ref{hypothesis_B} implies that there are positive constants $E$, $E^{\prime}$ and $E^{\prime\prime}$ such that the approximate phase $\tilde{\phi}$ satisfies the following $L^{\infty}(I)$-error bounds uniformly in $\varepsilon\in (0,\tilde{\eps}_{0}]$:
\begin{align}
	\lVert \phi-\tilde{\phi}\rVert_{L^{\infty}(I)}\leq E\Comma\quad\label{Es}\lVert \phi^{\prime}-\tilde{\phi}^{\prime}\rVert_{L^{\infty}(I)}\leq E^{\prime}\Comma\quad \lVert \phi^{\prime\prime}-\tilde{\phi}^{\prime\prime}\rVert_{L^{\infty}(I)}\leq E^{\prime\prime}\Comma
\end{align}
where $\tilde{\eps}_{0}\leq \eps_{0}$ is sufficiently small. Note that the functions $a$, $b$, and $b_{p}$, $p=0,\dots,5$, as well as all functions defined in (\ref{c0_etc})-(\ref{kappa0_etc}) can be computed exactly, since $\phi^{\prime}$ is explicitly known. However, since we assume here an approximate phase $\tilde{\phi}$, it appears more consistent to use instead the numerical approximations $\tilde{a}$, $\tilde{b}$, $\tilde{b}_{p}$ (for $p=0,\dots,5$), $\tilde{c}_{0}$, $\tilde{c}_{1}$, $\tilde{d}_{0}$, $\tilde{d}_{1}$, $\tilde{e}_{0}$, $\tilde{f}_{0}$, $\tilde{f}_{1}$, $\tilde{g}_{0}$, $\tilde{\kappa}_{0}$, and $\tilde{l}_{0}$, which can be obtained using exact derivatives of $\tilde{\phi}$ (e.g., when using the spectral integration from \cite[§4]{Arnold2022WKBmethodFT} to obtain $\tilde{\phi}$, the exact derivatives are readily available). Together with the approximate phase increments $\tilde{s}_{n}:=\tilde{\phi}(x_{n+1})-\tilde{\phi}(x_{n})$ this leads to the approximate matrices $\tilde{\mathbf{A}}_{n}^{1}\approx \mathbf{A}_{n}^{1}$, $\tilde{\mathbf{A}}_{n}^{2}\approx \mathbf{A}_{n}^{2}$ and $\tilde{\mathbf{A}}_{n}^{3}\approx \mathbf{A}_{n}^{3}$ which are defined in analogy to (\ref{scheme_matrices}). 
The third order method involving the approximate phase then reads
\begin{align}\label{scheme_perturbed}
	\tilde{Z}_{n+1}:=\left(\mathbf{I}+\tilde{\mathbf{A}}^{1}_{n}+\tilde{\mathbf{A}}^{2}_{n}+\tilde{\mathbf{A}}^{3}_{n}\right)\tilde{Z}_{n}\Comma\quad n=0,\dots,N-1\Comma
\end{align}
with $\tilde{Z}_{0}:=Z_{0}$ being the initial value. The corresponding (perturbed) inverse transform to the $U$-variable is then given by (see (\ref{U_inverse}))
\begin{align}\label{U_inverse_perturbed}
	\tilde{U}_{n}=\mathbf{P}^{-1}\exp\left(\frac{\ii}{\eps}\tilde{\mathbf{\Phi}}^{\eps}(x_{n})\right)\tilde{Z}_{n}\Comma\quad n=0,\dots,N\Comma
\end{align}
with $\tilde{\mathbf{\Phi}}^{\eps}:=\operatorname{diag}(\tilde{\phi},-\tilde{\phi})$ being the perturbed phase matrix. 

Now, given the above considerations, we stress that, by using the exact same arguments as in the proof of \cite[Theorem 3.2]{Arnold2022WKBmethodFT}, we can extend the estimates (\ref{global_error_Z})-(\ref{global_error_U}) to incorporate also the phase errors:
\begin{Theorem}[Adaption of Thm.\ 3.2 from \cite{Arnold2022WKBmethodFT}]\label{thm_main_perturbed}
	Let Hypothesis~\ref{hypothesis_A} be satisfied and let the approximate phase $\tilde{\phi}$ satisfy Hypothesis~\ref{hypothesis_B}. Let $Z$ and $U$ be the exact solutions of the IVPs (\ref{Zsystem}) and (\ref{Usystem}), respectively. Then, for $\tilde{Z}_{n}$ and $\tilde{U}_{n}$ being computed through the scheme (\ref{scheme_perturbed})-(\ref{U_inverse_perturbed}), there exists a generic constant $C\geq 0$ independent of $\eps\in(0,\tilde{\eps}_{0}]$, $h$, and $n$ such that
	\begin{align}\label{global_error_Z_perturbed}
		\lVert Z(x_{n})-\tilde{Z}_{n}\rVert &\leq C\eps^{3}h^{3}\max(\eps,h)\nonumber\\
		&\quad+C\eps\left[\min(\eps,E)+\eps(E^{\prime}+E^{\prime\prime})\right]\Comma\quad n=0,\dots,N\Comma
	\end{align}
	\begin{align}\label{global_error_U_perturbed}
		\lVert U(x_{n})-\tilde{U}_{n}\rVert &\leq C \frac{E}{\eps} + C\eps^{3}h^{3}\max(\eps,h)\nonumber\\
		&\quad+ C\eps\left[\min(\eps,E)+\eps(E^{\prime}+E^{\prime\prime})\right]\Comma\quad n=0,\dots,N\period
	\end{align}
Here, the constants $E,E^{\prime}$, and $E^{\prime\prime}$ are from (\ref{Es}).
\end{Theorem}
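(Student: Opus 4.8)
The plan is to avoid comparing the perturbed scheme (\ref{scheme_perturbed}) directly with the exact solution, and instead interpose the exact-phase scheme (\ref{scheme}) via the triangle inequality
\begin{align}
\lVert Z(x_{n})-\tilde{Z}_{n}\rVert \leq \lVert Z(x_{n})-Z_{n}\rVert + \lVert Z_{n}-\tilde{Z}_{n}\rVert\period\nonumber
\end{align}
The first term is already bounded by $C\eps^{3}h^{3}\max(\eps,h)$ in Theorem~\ref{thm_main}, so the entire phase-error contribution is carried by $\lVert Z_{n}-\tilde{Z}_{n}\rVert$, the difference of two one-step recursions driven by $\mathbf{A}_{n}^{j}$ and $\tilde{\mathbf{A}}_{n}^{j}$. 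Abbreviating $\mathbf{A}_{n}:=\mathbf{A}_{n}^{1}+\mathbf{A}_{n}^{2}+\mathbf{A}_{n}^{3}$ and analogously $\tilde{\mathbf{A}}_{n}$, subtraction gives
\begin{align}
Z_{n+1}-\tilde{Z}_{n+1} = (\mathbf{I}+\mathbf{A}_{n})(Z_{n}-\tilde{Z}_{n}) + (\mathbf{A}_{n}-\tilde{\mathbf{A}}_{n})\tilde{Z}_{n}\period\nonumber
\end{align}
Since the proof of Theorem~\ref{thm_main} already provides the smallness of $\lVert\mathbf{A}_{n}\rVert_{\infty}$, the stability of the scheme with an $\eps$-independent constant, and the boundedness of $\lVert\tilde{Z}_{n}\rVert$, unrolling this recursion reduces the whole task to summing the local matrix perturbations $\lVert\mathbf{A}_{n}^{j}-\tilde{\mathbf{A}}_{n}^{j}\rVert_{\infty}$ over the $\mathcal{O}(1/h)$ grid points.

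Next I would estimate each local perturbation by classifying how $\tilde{\phi}$ enters the quadratures $\mathbf{Q}_{1}^{3,3}$, $\mathbf{Q}_{2}$, $\mathbf{Q}_{3}$. There are three sources: (i) the slowly varying coefficients $b$, $b_{p}$ and all auxiliary functions (\ref{c0_etc})--(\ref{kappa0_etc}), which depend on the phase only through $\tilde{\phi}^{\prime}$ and $\tilde{\phi}^{\prime\prime}$, so by Hypothesis~\ref{hypothesis_B} and (\ref{Es}) each differs from its exact counterpart pointwise by $\mathcal{O}(E^{\prime}+E^{\prime\prime})$; (ii) the increments $s_{n}$ inside the factors $h_{p}(\frac{2}{\eps}s_{n})$, with $\vert s_{n}-\tilde{s}_{n}\vert=\vert\int_{x_{n}}^{x_{n+1}}(\phi^{\prime}-\tilde{\phi}^{\prime})\,\dd y\vert\leq hE^{\prime}$; and (iii) the absolute oscillatory factors $\ee^{\frac{2\ii}{\eps}\phi(x_{n})}$, for which I would use $\vert\ee^{\ii\alpha}-\ee^{\ii\beta}\vert\leq\min(2,\vert\alpha-\beta\vert)$ to obtain an error of size $\min(2,2E/\eps)$. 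These are then weighted by the prefactors of $\mathbf{A}_{n}^{j}$, whose sizes $\mathcal{O}_{\eps,h}(\eps\min(\eps,h))$, $\mathcal{O}_{\eps,h}(\eps^{3}h)$, $\mathcal{O}_{\eps,h}(\eps^{4}h\min(\eps,h))$ are recorded in the proof of Theorem~\ref{thm_main}.

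I expect the accumulation to be the main obstacle. The purely non-oscillatory quadrature pieces (the Simpson terms and the $b_{0}(\xi)^{2}$-type remainders in $\mathbf{Q}_{2}$) carry no absolute phase factor, feel only sources (i) and (ii), and sum straightforwardly over the $\mathcal{O}(1/h)$ steps while staying within the budget $\eps^{2}(E^{\prime}+E^{\prime\prime})$. By contrast, every term of $\mathbf{Q}_{1}^{3,3}$ and $\mathbf{Q}_{3}$, and the oscillatory part of $\mathbf{Q}_{2}$, carries the rapidly rotating factor $\ee^{\pm\frac{2\ii}{\eps}\phi(x_{n})}$; summing its perturbation naively over the grid loses one power of $\eps$ relative to the target $\eps\min(\eps,E)$. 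The cure — and precisely the reason the statement can invoke ``the exact same arguments as in the proof of \cite[Theorem 3.2]{Arnold2022WKBmethodFT}'' — is a discrete summation by parts that isolates these rotating factors and exploits their cancellation across consecutive grid points, recovering the missing power and yielding the total bound $\eps[\min(\eps,E)+\eps(E^{\prime}+E^{\prime\prime})]$. What must genuinely be verified is only that the new third-order matrices $\mathbf{A}_{n}^{1},\mathbf{A}_{n}^{2},\mathbf{A}_{n}^{3}$ share the structural features on which that machinery relies: the off-diagonal/diagonal pattern, the factorization of $\ee^{\frac{2\ii}{\eps}\phi(\xi)}$ out of each oscillatory entry, and the size estimates above. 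Granting this, the argument of \cite{Arnold2022WKBmethodFT} transfers with only cosmetic changes, establishing (\ref{global_error_Z_perturbed}).

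Finally I would pass from $Z$ to $U$ through the perturbed inverse transform (\ref{U_inverse_perturbed}). Inserting $\pm\,\ee^{\frac{\ii}{\eps}\mathbf{\Phi}^{\eps}(x_{n})}\tilde{Z}_{n}$ into
\begin{align}
U(x_{n})-\tilde{U}_{n} = \mathbf{P}^{-1}\left[\ee^{\frac{\ii}{\eps}\mathbf{\Phi}^{\eps}(x_{n})}Z(x_{n}) - \ee^{\frac{\ii}{\eps}\tilde{\mathbf{\Phi}}^{\eps}(x_{n})}\tilde{Z}_{n}\right]\nonumber
\end{align}
and using the unitarity of $\mathbf{P}^{-1}$ and of the exponential matrices, the first piece reduces to the already controlled $\lVert Z(x_{n})-\tilde{Z}_{n}\rVert$, while the second is bounded by $\lVert\ee^{\frac{\ii}{\eps}\mathbf{\Phi}^{\eps}(x_{n})}-\ee^{\frac{\ii}{\eps}\tilde{\mathbf{\Phi}}^{\eps}(x_{n})}\rVert\,\lVert\tilde{Z}_{n}\rVert$. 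Here the single power $\eps^{-1}$ in the exponent gives $\mathcal{O}(E/\eps)$, which is exactly the extra term distinguishing (\ref{global_error_U_perturbed}) from (\ref{global_error_Z_perturbed}).
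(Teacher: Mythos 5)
Your proposal follows essentially the same route that the paper itself invokes --- it gives no self-contained argument but defers to the proof of \cite[Theorem~3.2]{Arnold2022WKBmethodFT}: a stability-plus-perturbation analysis of the discrete one-step recursion in which the contributions of the rotating factors $\ee^{\pm\frac{2\ii}{\eps}\phi(x_{n})}$ must be telescoped (summed by parts) across grid points to recover the extra power of $\eps$ in $\eps\min(\eps,E)$, followed by the unitary perturbed inverse transform \eqref{U_inverse_perturbed} producing the additional $\mathcal{O}(E/\eps)$ term, exactly as you describe. The one point to flag is your source~(i): for the third-order quadratures the coefficients $b_{p}$ with $p\geq 2$ and the auxiliary functions \eqref{c0_etc}--\eqref{kappa0_etc} depend on derivatives of $\tilde{\phi}$ up to order six, not only on $\tilde{\phi}^{\prime}$ and $\tilde{\phi}^{\prime\prime}$, so to land on a bound featuring only $E^{\prime}$ and $E^{\prime\prime}$ from \eqref{Es} one must additionally verify that these higher-derivative perturbations (bounded via the $C^{6}$ regularity in Hypothesis~\ref{hypothesis_B}) enter with sufficiently high powers of $\eps$ and $h$ to be absorbed into the stated estimate.
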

Let us compare this result with the estimates (\ref{global_error_Z}) and (\ref{global_error_U}): The new (additional) second term in (\ref{global_error_Z_perturbed}) is caused by the impact of the approximate phase during the computation of the step update (\ref{scheme_perturbed}), whereas the new first term in (\ref{global_error_U_perturbed}) is due to the inverse transform (\ref{U_inverse_perturbed}) involving the perturbed phase matrix $\tilde{\mathbf{\Phi}}^{\eps}(x_{n})$. Obviously, the $\mathcal{O}(E/\varepsilon)$-term is rather unfavorable. 
In order to reduce its impact as much as possible, one should hence aim for a highly accurate approximation $\tilde{\phi}$, e.g., by employing a spectral method. Indeed, such an approach has already proven useful in \cite{Arnold2022WKBmethodFT}, where the $\mathcal{O}(E/\varepsilon)$-term in their estimate was numerically invisible, since it was reduced below relative machine precision (compared to the other terms).
\subsection{Simplified third order scheme}\label{sec:simpl}
In this subsection, we present a simplified third order method for solving the IVP (\ref{Zsystem}). The basis for this method is the observation, that the inability to achieve the originally desired error order for the approximations of $\mathbf{M}_{2}^{\eps}$ and $\mathbf{M}_{3}^{\eps}$ in Section~\ref{chap:construction_scheme}, was due to the presence of non-oscillatory integrals in the computations. Indeed, it is evident from Subsection~\ref{M2} that the approximation of the non-oscillatory integral $J_{1}$ in (\ref{J1}) constrains the maximum achievable error order (w.r.t.\ $\eps$) for the approximation of $\mathbf{M}_{2}^{\eps}$ to $\mathcal{O}(\eps)$. Consequently, this implies that the maximum achievable error order (w.r.t.\ $\eps$) for any numerical scheme resulting from this approximation is $\mathcal{O}(\eps^{3})$. Given this unavoidable constraint, it seems unnecessary and superfluous to approximate the other terms with higher accuracy. Thus, we may want to weaken several approximations from Section~\ref{chap:construction_scheme} in order to derive a simplified scheme, which remains third order w.r.t.\ $h$, while still yielding the same asymptotic accuracy as the third order scheme (\ref{scheme})-(\ref{U_inverse_num}), i.e.\ $\mathcal{O}(\eps^{3})$ as $\eps\to 0$. This can be achieved as follows:

\medskip
\underline{Step 1 (approximation of $\mathbf{M}_{1}^{\eps}$ in (\ref{picard3})):} Instead of $\mathbf{Q}_{1}^{3,3}$, we shall use $\mathbf{Q}_{1}^{2,3}$ to approximate $\mathbf{M}_{1}^{\eps}$. This incurs an error of order $\mathcal{O}_{\eps,h}(\eps^{2}h^{3}\min(\eps,h))$ (instead of $\mathcal{O}_{\eps,h}(\eps^{3}h^{3}\min(\eps,h))$), see Lemma~\ref{lemma_m1}.

\underline{Step 2 (approximation of $\mathbf{M}_{2}^{\eps}$ in (\ref{picard3})):} To approximate $\mathbf{M}_{2}^{\eps}$, we shall use a simplified quadrature $\mathbf{Q}_{2,simp}$ (instead of $\mathbf{Q}_{2}$), which can be derived by inserting $\mathbf{Q}_{1}^{1,2}$ (instead of $\mathbf{Q}_{1}^{2,2}$) as an approximation for $\mathbf{M}_{1}^{\eps}$ in (\ref{M2_matrix}). The non-oscillatory integrals that then occur are again approximated applying Simpson's rule, and the oscillatory integrals are all approximated with an error order $\mathcal{O}_{\eps,h}(\eps h^{3}\min(\eps,h))$ (instead of $\mathcal{O}_{\eps,h}(\eps^{2} h^{3}\min(\eps,h))$), using only SAM-steps.

\underline{Step 3 (approximation of $\mathbf{M}_{3}^{\eps}$ in (\ref{picard3})):} To approximate $\mathbf{M}_{3}^{\eps}$, we shall use a simplified quadrature $\mathbf{Q}_{3,simp}$ (instead of $\mathbf{Q}_{3}$), which can be derived as follows: First, by inserting $\mathbf{Q}_{1}^{0,1}$ (instead of $\mathbf{Q}_{1}^{1,1}$) as an approximation for $\mathbf{M}_{1}^{\eps}$ in (\ref{M2_matrix}), we derive with only one SAM-step an approximation $\widetilde{\mathbf{Q}}_{2,simp}$ of $\mathbf{M}_{2}^{\eps}$, with an error order $\mathcal{O}_{\eps,h}(h^{2}\min(\eps,h))$. Then, by inserting $\widetilde{\mathbf{Q}}_{2,simp}$ in (\ref{M3_matrix}) one obtains an oscillatory integral that can be approximated using only SAM-steps with an error order $\mathcal{O}_{\eps,h}(h^{3}\min(\eps,h))$. This yields the quadrature $\mathbf{Q}_{3,simp}$.

\medskip
We stress that the implementation and analysis of Step~2 and Step~3 is fully straightforward and very similar to the computations in Subsection~\ref{M2} and Subsection~\ref{M3}, while Step~1 is trivial. The resulting quadratures $\mathbf{Q}_{2,simp}$ and $\mathbf{Q}_{3,simp}$, along with their corresponding error estimates, are provided by the following lemma.
\begin{Lemma}\label{lemma_simplified}
    Let Hypothesis \ref{hypothesis_A} be satisfied and define
\begin{align}
	Q_{2,simp}(\eta,\xi)&:=-\ii\eps Q_{S}[bb_{0}](\eta,\xi)
	-\eps^{2}b_{0}(\xi)b_{0}(\eta)h_{1}\left(-\frac{2}{\eps}s_{n}\right)\nonumber\\
	&-\ii\eps^{3}\left[b_{0}(\eta)\left(b_{1}(\eta)-2b_{2}(\eta)s_{n}\right)-b_{0}(\xi)b_{1}(\eta)\right]h_{2}\left(-\frac{2}{\eps}s_{n}\right)\nonumber\\
	&+\eps^{4}\left[b_{2}(\eta)\left(b_{0}(\xi)+b_{0}(\eta)\right)-b_{1}(\eta)^{2}\right]h_{3}\left(-\frac{2}{\eps}s_{n}\right)\Comma\label{Q_2_simp}\\
    Q_{3,simp}(\eta,\xi)&:=-2\ee^{\frac{2\ii}{\eps}\phi(\xi)}b_{0}(\eta)^{3}\left(\eps^{2}s_{n}h_{2}\left(\frac{2}{\eps}s_{n}\right)+\ii\eps^{3}h_{3}\left(\frac{2}{\eps}s_{n}\right)\right)\label{Q_3_simp}
\end{align}
and
\begin{align}
	\mathbf{Q}_{2,simp}(\eta,\xi)&:=
	\begin{pmatrix}
		Q_{2,simp}(\eta,\xi) & 0 \\
		0 & \overline{Q_{2,simp}(\eta,\xi)} 
	\end{pmatrix}\Comma\label{Q2_simp}\\
    \mathbf{Q}_{3,simp}(\eta,\xi)&:=
	\begin{pmatrix}
		0 & \overline{Q_{3,simp}(\eta,\xi)} \\
		Q_{3,simp}(\eta,\xi) & 0 
	\end{pmatrix}\period\label{Q3_simp}
\end{align}
Then there exists a generic constant $C\geq 0$ independent of $\varepsilon\in(0,\eps_{0}]$, $h$, and $n$ such that
\begin{align}
	\lVert \mathbf{M}_{2}^{\eps}(\eta,\xi)-\mathbf{Q}_{2,simp}(\eta,\xi)\rVert_{\infty} &\leq C\eps h^{4}\Comma\label{Q2_simp_estimate}\\
    \lVert \mathbf{M}_{3}^{\eps}(\eta,\xi)-\mathbf{Q}_{3,simp}(\eta,\xi)\rVert_{\infty} &\leq C h^{3}\min(\eps,h)\period\label{Q3_simp_estimate}
\end{align}
\end{Lemma}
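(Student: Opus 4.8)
The plan is to establish both estimates by rerunning, essentially verbatim, the constructions behind Lemma~\ref{lemma_m2} and Lemma~\ref{lemma_m3}, but feeding in the \emph{weaker} inner approximations of $\mathbf{M}_1^{\eps}$ prescribed in Steps~1--3 and carefully tracking how the two small parameters propagate through each integration by parts. The only tools needed are the AM/SAM machinery already used in Subsections~\ref{M2}--\ref{M3}, the shift identities \eqref{help1}, \eqref{help2}, \eqref{help3}, and the size estimate \eqref{h_p_order} for $h_p$, which is what turns a gained power of $h_p$ into the required mixed order. The algebra stays short because the recursion \eqref{b-compuations} collapses all amplitudes that occur to powers and products of the $b_p$ (e.g.\ $\tfrac{b b_0}{2\phi'}=b_0^2$, $\kappa_0=b_0b_1$, $\tfrac{b b_0^2}{2\phi'}=b_0^3$).

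For the bound \eqref{Q2_simp_estimate} on $\mathbf{M}_2^{\eps}$, I would start from \eqref{m2_definition} and insert $Q_1^{1,2}(y,\xi)$ (instead of $Q_1^{2,2}$) for $(\mathbf{M}_1^{\eps}(y;\xi))_{2,1}$; by Lemma~\ref{lemma_m1} this costs only $\mathcal{O}_{\eps,h}(\eps h^3\min(\eps,h))$ after integration over $[\xi,\eta]$. The integrand then decomposes exactly as in \eqref{m_2_approximation} into a non-oscillatory piece $-(\ii\eps)\int_{\xi}^{\eta} b\,b_0\,\dd y$, handled by Simpson's rule as in \eqref{J1} at cost $\mathcal{O}(\eps h^5)$; a term proportional to $\overline{m_1^{\eps}(\eta,\xi)}$, which I replace by $\overline{Q_1^{0,3}}$ (pure SAM, no AM step); and two oscillatory integrals carrying the factors $h_1$ and $h_2$. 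To the latter I apply \eqref{help1} and \eqref{help2} (the $h_2$-integral treated exactly like $J_6$, i.e.\ split into a $\widetilde{J}_6$- and a $\widehat{J}_6$-type part) and perform the requisite SAM steps. The only subtlety is that the $h_1$-integral needs \emph{two} SAM steps to push its remainder down to $\mathcal{O}_{\eps,h}(\eps h^3\min(\eps,h))$, the amplitudes collapsing to $\kappa_0=b_0b_1$ and then $b_1^2+b_0b_2$; this is precisely what produces the coefficients in \eqref{Q_2_simp}. Collecting the leading terms reproduces $Q_{2,simp}$, and summing the discarded remainders $\mathcal{O}_{\eps,h}(\eps h^3\min(\eps,h))+\mathcal{O}(\eps h^5)=\mathcal{O}_{\eps,h}(\eps h^4)$ gives the claim.

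For \eqref{Q3_simp_estimate} on $\mathbf{M}_3^{\eps}$, I would first build the crude approximation $\widetilde{\mathbf{Q}}_{2,simp}$ of $\mathbf{M}_2^{\eps}$ announced in Step~3: inserting $Q_1^{0,1}$ into \eqref{M2_matrix}, applying \eqref{help1}, and performing a single SAM step yields $(\mathbf{M}_2^{\eps}(y;\xi))_{1,1}=(\ii\eps)^2 b_0(y)^2\,h_2\!\left(\tfrac{2}{\eps}(\phi(\xi)-\phi(y))\right)+\mathcal{O}_{\eps,h}(h^2\min(\eps,h))$. Substituting this into \eqref{M3_matrix} and integrating incurs an extra factor $h$, hence a remainder $\mathcal{O}_{\eps,h}(h^3\min(\eps,h))$ of the target size. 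The surviving oscillatory integral is of exactly the $K_4$-type, so I would split it via \eqref{help3} into a $\widetilde{K}$-part (one SAM step) and a $\widehat{K}$-part (handled as $\widehat{K}_4$ in \eqref{K4hat}), the amplitude now collapsing to $b_0^3$; collecting the surviving $h_2$- and $h_3$-terms reproduces \eqref{Q_3_simp}.

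I expect the main obstacle to be the two-parameter bookkeeping rather than any new idea: for each individual integral one must choose the exact number of SAM (and AM) steps so that the leftover remainder lands in the band $\mathcal{O}_{\eps,h}(\eps h^3\min(\eps,h))$ for $\mathbf{M}_2^{\eps}$, respectively $\mathcal{O}_{\eps,h}(h^3\min(\eps,h))$ for $\mathbf{M}_3^{\eps}$ --- one step too few leaves a remainder (e.g.\ of size $\eps^2 h^2$ for the $h_1$-integral in the $\mathbf{M}_2^{\eps}$ computation) that violates the bound, while one step too many is wasted work. Verifying the $\min$/$\max$ combinations (that $\eps h^3\min(\eps,h)+\eps h^5=\mathcal{O}_{\eps,h}(\eps h^4)$, and that the extra $(\phi(\xi)-\phi(y))$-weighted pieces produced by \eqref{help2}/\eqref{help3} are already negligible via \eqref{h_p_order}) is the only delicate point; the amplitude simplifications through \eqref{b-compuations} are what keep $Q_{2,simp}$ and $Q_{3,simp}$ compact.
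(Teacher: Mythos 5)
Your proposal is correct and follows exactly the route the paper intends (the paper itself only sketches it via Steps 1--3 and the remark that the analysis is "very similar" to Subsections~\ref{M2}--\ref{M3}): inserting $Q_1^{1,2}$, resp.\ $Q_1^{0,1}$, treating the resulting non-oscillatory piece by Simpson and the oscillatory pieces by SAM steps via \eqref{help1}--\eqref{help3}, and your amplitude bookkeeping ($\kappa_0=b_0b_1$, then $b_1^2+b_0b_2$ for the $h_1$-integral; $b_0^3$ for the $K_4$-type integral) reproduces the coefficients of \eqref{Q_2_simp} and \eqref{Q_3_simp} with remainders summing to $\mathcal{O}_{\eps,h}(\eps h^4)$ and $\mathcal{O}_{\eps,h}(h^3\min(\eps,h))$ as claimed.
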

The resulting simplified third order scheme then reads: Let $\mathbf{A}^{1,simp}_{n}:=\eps\mathbf{Q}_{1}^{2,3}(x_{n+1},x_{n})$, $\mathbf{A}^{2,simp}_{n}:=\eps^{2}\mathbf{Q}_{2,simp}(x_{n+1},x_{n})$, and $\mathbf{A}^{3,simpl}_{n}:=\eps^{3}\mathbf{Q}_{3,simp}(x_{n+1},x_{n})$. Given the initial value $Z_{0}$ we define
\begin{align}\label{scheme_simpl}
Z_{n+1}:=\left(\mathbf{I}+\mathbf{A}^{1,simp}_{n}+\mathbf{A}^{2,simp}_{n}+\mathbf{A}^{3,simp}_{n}\right)Z_{n}\Comma\quad n=0,\dots,N-1\period
\end{align}
The simplified method (\ref{scheme_simpl}), (\ref{U_inverse_num}) satisfies the following global error estimate, which can be proven analogously to Theorem~\ref{thm_main}:
\begin{Theorem}\label{thm_simpl}
Let Hypothesis \ref{hypothesis_A} be satisfied. Let $Z$ and $U$ be the exact solutions of the IVPs (\ref{Zsystem}) and (\ref{Usystem}), respectively. Then, for $Z_{n}$ and $U_{n}$ being computed through the scheme (\ref{scheme_simpl}), (\ref{U_inverse_num}), there exists a generic constant $C\geq 0$ independent of $\eps\in(0,\eps_{0}]$, $h$, and $n$ such that
\begin{align}\label{global_error_Z_simp}
	\lVert Z(x_{n})-Z_{n}\rVert_{\infty}
	\leq  C \eps^{3}h^{3}\Comma\quad n=0,\dots,N\Comma
\end{align}
\begin{align}\label{global_error_U_simp}
	\lVert U(x_{n})-U_{n}\rVert_{\infty}
	\leq  C \eps^{3}h^{3}\Comma\quad n=0,\dots,N\period
\end{align}
\end{Theorem}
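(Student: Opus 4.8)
The plan is to copy the structure of the proof of Theorem~\ref{thm_main} essentially verbatim, merely replacing the quadratures $\mathbf{Q}_1^{3,3},\mathbf{Q}_2,\mathbf{Q}_3$ by the simplified ones $\mathbf{Q}_1^{2,3},\mathbf{Q}_{2,simp},\mathbf{Q}_{3,simp}$ and invoking Lemma~\ref{lemma_m1} and Lemma~\ref{lemma_simplified} wherever Lemma~\ref{lemma_m2} and Lemma~\ref{lemma_m3} were used. First I would establish stability of the one-step map (\ref{scheme_simpl}). Since $\mathbf{Q}_1^{2,3},\mathbf{Q}_{2,simp},\mathbf{Q}_{3,simp}$ approximate $\mathbf{M}_1^\eps,\mathbf{M}_2^\eps,\mathbf{M}_3^\eps$ up to higher-order remainders (Lemma~\ref{lemma_m1}, Lemma~\ref{lemma_simplified}), and since the same oscillatory-integral bounds used in Theorem~\ref{thm_main} give $\lVert\mathbf{M}_1^\eps\rVert_\infty=\mathcal{O}_{\eps,h}(\min(\eps,h))$, $\lVert\mathbf{M}_2^\eps\rVert_\infty=\mathcal{O}_{\eps,h}(\eps h)$ and $\lVert\mathbf{M}_3^\eps\rVert_\infty=\mathcal{O}_{\eps,h}(h^2\min(\eps,h))$, I would read off
\begin{align}
\lVert\mathbf{A}^{1,simp}_n\rVert_\infty=\mathcal{O}_{\eps,h}(\eps\min(\eps,h))\Comma\quad \lVert\mathbf{A}^{2,simp}_n\rVert_\infty=\mathcal{O}_{\eps,h}(\eps^3 h)\Comma\quad \lVert\mathbf{A}^{3,simp}_n\rVert_\infty=\mathcal{O}_{\eps,h}(\eps^3 h^2\min(\eps,h))\period\nonumber
\end{align}
All three are bounded by $C\eps h$, so $\mathbf{I}+\mathbf{A}^{1,simp}_n+\mathbf{A}^{2,simp}_n+\mathbf{A}^{3,simp}_n=\mathbf{I}+\mathcal{O}_{\eps,h}(\eps h)$, and the product of the $N=\mathcal{O}(1/h)$ such maps stays uniformly bounded, yielding an $\eps$-independent stability constant.

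Next I would compute the local consistency error exactly as in (\ref{consistency_error}). Starting from the Picard expansion (\ref{picard3}) one arrives at
\begin{align}
e_n=\left[\eps\left(\mathbf{M}_1^\eps-\mathbf{Q}_1^{2,3}\right)+\eps^2\left(\mathbf{M}_2^\eps-\mathbf{Q}_{2,simp}\right)+\eps^3\left(\mathbf{M}_3^\eps-\mathbf{Q}_{3,simp}\right)\right]Z(x_n)+\mathcal{O}_{\eps,h}\left(\eps^4 h^3\min(\eps,h)\right)\period\nonumber
\end{align}
I would then insert the three quadrature estimates: Lemma~\ref{lemma_m1} with $P=2$, $\widetilde P=3$ gives $\eps\lVert\mathbf{M}_1^\eps-\mathbf{Q}_1^{2,3}\rVert_\infty=\mathcal{O}_{\eps,h}(\eps^3 h^3\min(\eps,h))$, while (\ref{Q2_simp_estimate}) and (\ref{Q3_simp_estimate}) give $\eps^2\lVert\mathbf{M}_2^\eps-\mathbf{Q}_{2,simp}\rVert_\infty=\mathcal{O}_{\eps,h}(\eps^3 h^4)$ and $\eps^3\lVert\mathbf{M}_3^\eps-\mathbf{Q}_{3,simp}\rVert_\infty=\mathcal{O}_{\eps,h}(\eps^3 h^3\min(\eps,h))$. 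Since $\min(\eps,h)\leq h$ and $\eps\leq\eps_0\leq 1$, each of these contributions, together with the Picard remainder, is dominated by $\eps^3 h^4$, so that $e_n=\mathcal{O}_{\eps,h}(\eps^3 h^4)$.

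Finally I would close the argument by the standard telescoping (Lady Windermere) estimate for one-step methods: the global error is at most the stability constant times $N\cdot\max_n\lVert e_n\rVert_\infty=\mathcal{O}(1/h)\cdot\eps^3 h^4=\mathcal{O}(\eps^3 h^3)$, which is (\ref{global_error_Z_simp}). The bound (\ref{global_error_U_simp}) then follows verbatim as in Theorem~\ref{thm_main}: under Hypothesis~\ref{hypothesis_A} alone the phase is exact, so the inverse transform (\ref{U_inverse}) expresses $U(x_n)-U_n$ as $\mathbf{P}^{-1}\exp\left(\frac{\ii}{\eps}\mathbf{\Phi}^\eps(x_n)\right)(Z(x_n)-Z_n)$, and unitarity of both factors transfers the $Z$-bound directly to $U$. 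The only point demanding genuine care is the order bookkeeping in the consistency step — namely confirming that the deliberately weakened quadratures of Lemma~\ref{lemma_simplified} still produce a local error of order $\eps^3 h^4$ rather than something larger; this is exactly what keeps the scheme third order in $h$ with $\mathcal{O}(\eps^3)$ asymptotic accuracy, the only loss relative to Theorem~\ref{thm_main} being the missing factor $\max(\eps,h)$. All the genuinely technical labour, by contrast, is already discharged inside Lemma~\ref{lemma_simplified}, whose proof simply repeats the SAM- and Simpson-based computations of Subsections~\ref{M2} and~\ref{M3}.
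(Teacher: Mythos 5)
Your proposal is correct and follows exactly the route the paper intends: the paper itself only remarks that Theorem~\ref{thm_simpl} ``can be proven analogously to Theorem~\ref{thm_main}'', and your argument is precisely that analogue — stability from the size bounds on $\mathbf{A}^{1,simp}_n,\mathbf{A}^{2,simp}_n,\mathbf{A}^{3,simp}_n$, consistency from Lemma~\ref{lemma_m1} (with $P=2$, $\widetilde P=3$) and Lemma~\ref{lemma_simplified}, and the telescoping/unitarity steps unchanged. The order bookkeeping ($e_n=\mathcal{O}_{\eps,h}(\eps^3h^4)$, hence a global error $\mathcal{O}_{\eps,h}(\eps^3h^3)$) is accurate.
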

When comparing estimates (\ref{global_error_Z_simp})-(\ref{global_error_U_simp}) with estimates (\ref{global_error_Z})-(\ref{global_error_U}) for the scheme (\ref{scheme})-(\ref{U_inverse_num}), we observe that the downside of using the simplified scheme (\ref{scheme_simpl}), (\ref{U_inverse_num}) is the loss of the $\mathcal{O}(\max(\eps,h))$-factor. Indeed, while both methods have the same asymptotical (w.r.t.\ $\eps$) as well as numerical (w.r.t.\ $h$) order, this factor might still be beneficial, recalling that both parameters $\eps$ and $h$ are small. 
Further, we note that, in case of a numerically computed phase, Theorem~\ref{thm_simpl} can be generalized analogously to Theorem~\ref{thm_main_perturbed}. That is, by using the same strategy as in Subsection~\ref{sec:numphase}, we have the following error estimates for the simplified scheme including an approximated phase:
\begin{align}\label{global_error_Z_simp_numphase}
	\lVert Z(x_{n})-\tilde{Z}_{n}\rVert_{\infty}
	&\leq  C \eps^{3}h^{3}\nonumber\\
    &\quad+C\eps\left[\min(\eps,E)+\eps(E^{\prime}+E^{\prime\prime})\right]\Comma\quad n=0,\dots,N\Comma
\end{align}
\begin{align}\label{global_error_U_simp_numphase}
	\lVert U(x_{n})-\tilde{U}_{n}\rVert_{\infty}
	&\leq  C \frac{E}{\eps} + C\eps^{3}h^{3}\nonumber\\
	&\quad+ C\eps\left[\min(\eps,E)+\eps(E^{\prime}+E^{\prime\prime})\right]\Comma\quad n=0,\dots,N\period
\end{align}
\section{Numerical  results}\label{chap:simulations}
In this section we present and compare numerical results by applying the novel schemes from Section \ref{chap:scheme} to exemplary IVPs. Since a large part of the development of the two presented third order schemes (\ref{scheme})-(\ref{U_inverse_num}) and (\ref{scheme_simpl}), (\ref{U_inverse_num}) was based on the strategies from \cite{Arnold2011WKBBasedSF}, we shall also include their second order method into our comparison. This will also illustrate the efficiency gain of the new third order methods over the second order method from \cite{Arnold2011WKBBasedSF}.

For clarity of the presentation we shall in the following denote with \textit{WKB3} the third order scheme (\ref{scheme})-(\ref{U_inverse_num}), and with \textit{WKB3s} the simplified third order scheme (\ref{scheme_simpl}), (\ref{U_inverse_num}). Further, we denote with \textit{WKB2} the second order method from \cite{Arnold2011WKBBasedSF}, which yields global errors (w.r.t.\ the $U$-variable) of order $\mathcal{O}_{\eps,h}(\eps^{3}h^{2})$, provided that the phase (\ref{phase}) is explicitly available. We note that a single step of each of these three methods involves a different number of computational operations as well as function evaluations. Hence it is not clear a priori which of the three methods is the most efficient. We shall thus assess the overall efficiency of each method by comparing respective work-precision diagrams.

All computations here are carried out using \MATLAB version 23.2.0.2459199 (R2023b). Moreover, for the results of Figures~\ref{plot:airy_err_vs_h_vpa}-\ref{plot:airy_times_vs_err_vpa}, which involve very small approximation errors, we used the Advanpix Multiprecision Computing Toolbox for \MATLAB \cite{Advanpix} with quadruple-precision to avoid rounding errors. We note that this increases the computational times when compared to using the standard double-precision arithmetic of \MATLAB.
\subsection{First example: Airy equation}\label{sec:Ex1}
For the Airy equation, i.e., with $a(x)=x$ we consider the following IVP:
\begin{align}\label{eqn:Airy}
	\begin{cases}
		\varepsilon^{2}\varphi^{\prime\prime  }(x) + x \varphi(x) = 0 \Comma \quad x \in [1, 2] \Comma \\
		\varphi(1) = \Ai(-\frac{1}{\varepsilon^{2/3}}) + \ii \Bi(-\frac{1}{\varepsilon^{2/3}}) \Comma \\
		\varepsilon\varphi^{\prime}(1) = \anton{-} \varepsilon^{1/3}\left(\Ai^{\prime}(-\frac{1}{\varepsilon^{2/3}}) + \ii \Bi^{\prime}(-\frac{1}{\varepsilon^{2/3}})\right) \period
	\end{cases}
\end{align}
Here, the exact solution is given by
\begin{align}
	\varphi_{exact}(x) = \Ai(-\frac{x}{\varepsilon^{2/3}}) + \ii \Bi(-\frac{x}{\varepsilon^{2/3}}) \Comma \nonumber
\end{align}
where $\Ai$ and $\Bi$ denote the Airy functions of first and second kind, respectively (e.g., see \cite[Chap.\ 9]{Olver2010NISTHO}, \cite[Sec.\ 5.1]{Krner2021WKBbasedSW}). Here the phase (\ref{phase}) is exactly computable.

Let us first investigate the convergence results from Theorems~\ref{thm_main} and \ref{thm_simpl}. In Figure~\ref{plot:airy_err_vs_h} we plot the global error $\max_{n\leq N}\lVert U(x_{n})-U_{n}\rVert_{\infty}$ as a function of the step size $h$, for several values of $\eps$. The left plot shows the error for WKB3 (solid lines with asterisks) and WKB3s (dashed lines with circles), and the right plot shows the error for WKB2.
\begin{figure}
	\centering
	\includegraphics[scale=0.75]{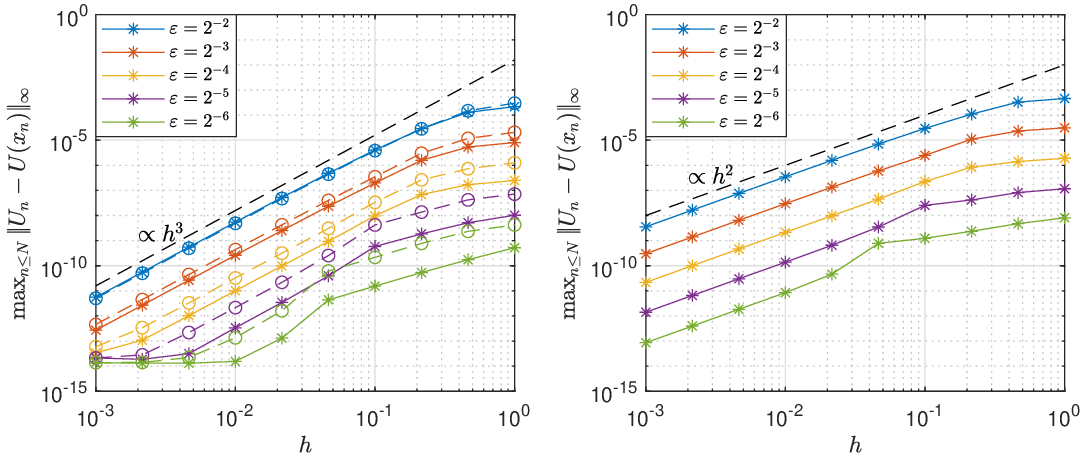}
	\caption{Global errors $\max_{n\leq N}\lVert U(x_{n})-U_{n}\rVert_{\infty}$ for the Airy equation (\ref{eqn:Airy}) as functions of the step size $h$, for several $\eps$-values. Left: WKB3 (solid lines with asterisks) and WKB3s (dashed lines with circles). Right: WKB2.}
	\label{plot:airy_err_vs_h}
\end{figure}
As indicated by the dashed black line in the left plot, the error for WKB3 and WKB3s clearly decays like $\mathcal{O}(h^{3})$, in accordance with the third order estimates (\ref{global_error_U}) and (\ref{global_error_U_simp}). For small step sizes $h$ and $\eps=2^{-5},2^{-6}$ the error saturates at approximately $10^{-14}$, due to rounding errors. Further, for a fixed step size $h$, the error for all three methods decreases with $\eps$, demonstrating the $\eps$-asymptotical correctness of each method. According to estimate (\ref{global_error_U}), we expect the error for WKB3 to behave here like $\mathcal{O}(\eps^{4})$, since no shown error curve decays like $\mathcal{O}(h^{4})$, suggesting that the $\mathcal{O}_{\eps,h}(\max(\eps,h))$-factor in (\ref{global_error_U}) is equal to $\mathcal{O}(\eps)$ for the shown $\eps$-values. Further, estimate (\ref{global_error_U_simp}) suggests that the error for WKB3s decreases like $\mathcal{O}(\eps^{3})$, which is also the expected error behavior of WKB2, according to \cite{Arnold2011WKBBasedSF}. However, Figure~\ref{plot:airy_err_vs_h} reveals that the error for each method shows an $\eps$-order that is $0.3$\,-\,$0.9$ higher than expected theoretically. This is due to the oscillatory behavior of the consistency error of each method, which leads to cancellation effects in successive integration steps. Indeed, this phenomenon was already observed and analyzed in \cite[§3.3]{Arnold2011WKBBasedSF}, and appears in all figures of the present paper. 

Figure~\ref{plot:airy_times_vs_err} contains work-precision diagrams, which correspond to the computations from Figure~\ref{plot:airy_err_vs_h}.
\begin{figure}
	\centering
\includegraphics[scale=0.75]{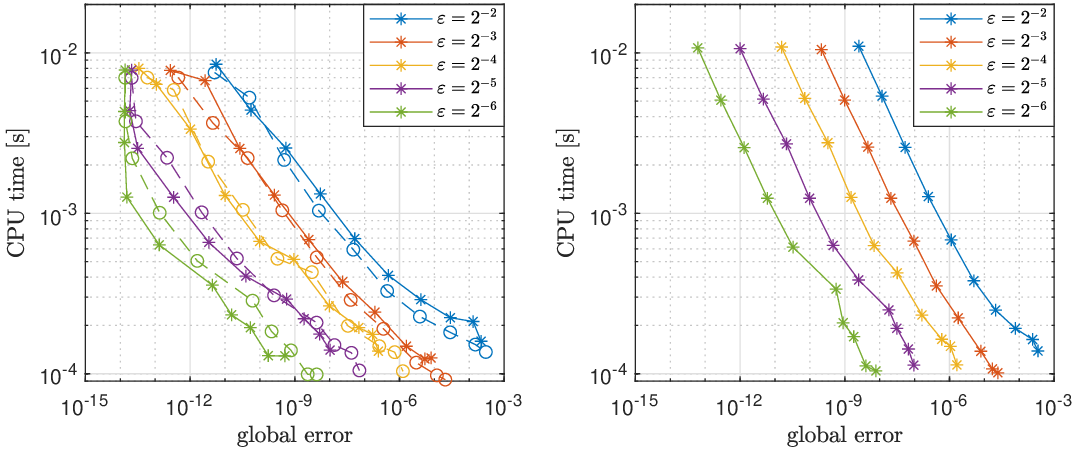}
	\caption{CPU times vs.\ global errors $\max_{n\leq N}\lVert U(x_{n})-U_{n}\rVert_{\infty}$ for the Airy equation (\ref{eqn:Airy}) on the spatial interval $[1, 2]$, for several $\eps$-values. Left: WKB3 (solid lines with asterisks) and WKB3s (dashed lines with circles). Right: WKB2.}
	\label{plot:airy_times_vs_err}
\end{figure}
That is, CPU times (measured in seconds) are plotted against the global errors, resulting from using the different step sizes $h$.
Clearly, in the same amount of time, WKB3 and WKB3s both reach much smaller global errors, when compared to WKB2. For instance, with a CPU time of approximately $10^{-3}$ seconds, the global errors using WKB3 or WKB3s lie between $10^{-14}$\,-\,$10^{-13}$ and $10^{-8}$ for the different $\eps$-values, whereas the errors lie between $10^{-11}$ and $10^{-7}$\,-\,$10^{-6}$ when using WKB2. Vice versa, to reach a fixed accuracy, WKB3 and WKB3s need significantly less CPU time than WKB2. For example, to produce a global error of approximately $10^{-8}$ for $\eps=2^{-2}$, WKB3 and WKB3s need approximately $10^{-3}$ seconds, whereas WKB2 needs around $5\cdot 10^{-3}$ seconds. The time difference is even larger when considering smaller values of $\eps$: E.g., to reach an accuracy of approximately $10^{-13}$ for $\eps=2^{-6}$, WKB3 and WKB3s need around $6\cdot 10^{-4}$ and $10^{-3}$ seconds, respectively. By contrast, the computational time to reach the same accuracy with WKB2 is around $10^{-2}$ seconds. Hence, we conclude from Figure~\ref{plot:airy_times_vs_err} that WKB3 and WKB3s are both more efficient than WKB2 (for this example), with CPU times being smaller up to a factor of ten. The difference between WKB3 and WKB3s is not so obvious: Indeed, for $\eps=2^{-2}$ and $2^{-3}$ we conclude from the left plot of Figure~\ref{plot:airy_times_vs_err} that WKB3s needs less time to reach the same accuracy as WKB3. However, for the smaller values $\eps=2^{-6}$ and $\eps=2^{-5}$, WKB3 becomes more efficient. 

In Figures~\ref{plot:airy_err_vs_h_vpa}-\ref{plot:airy_times_vs_err_vpa} analogous plots are shown for much smaller values of $\eps$.
\begin{figure}
	\centering
	\includegraphics[scale=0.75]{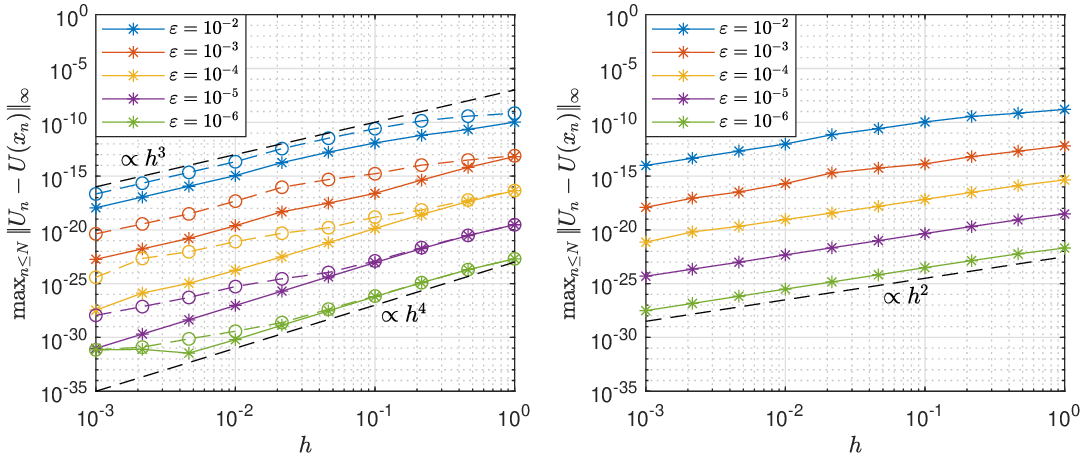}
	\caption{Global errors $\max_{n\leq N}\lVert U(x_{n})-U_{n}\rVert_{\infty}$ for the Airy equation (\ref{eqn:Airy}) on the spatial interval $[1, 2]$ as a function of the step size $h$, for several $\eps$-values. Left: WKB3 (solid lines with asterisks) and WKB3s (dashed lines with circles). Right: WKB2.}
	\label{plot:airy_err_vs_h_vpa}
\end{figure}
According to the convergence plot on the left of Figure~\ref{plot:airy_err_vs_h_vpa}, when $\eps\ll h$, the error for WKB3 even decreases like $\mathcal{O}(h^{4})$, e.g., for $\eps=10^{-4},10^{-5},10^{-6}$, as indicated by the bottom dashed black line. This behavior agrees well with estimate (\ref{global_error_U}), see also Remark \ref{remark_h4}. Indeed, the $\mathcal{O}_{\eps,h}(\max(\eps,h))$-factor from estimate (\ref{global_error_U}) is equal to $\mathcal{O}(h)$ in these cases. Hence, for large step sizes $h$, we cannot expect an $\mathcal{O}(\eps^{4})$ error behavior for WKB3 anymore (as in Figure~\ref{plot:airy_err_vs_h}). Instead, we observe that (e.g., for $h=1$), the error for WKB3, WKB3s, and WKB2 (see the right plot) roughly decreases like $\mathcal{O}(\eps^{3})$. This is in good agreement with estimates (\ref{global_error_U}) and (\ref{global_error_U_simp}).
\begin{figure}
	\centering
	\includegraphics[scale=0.75]{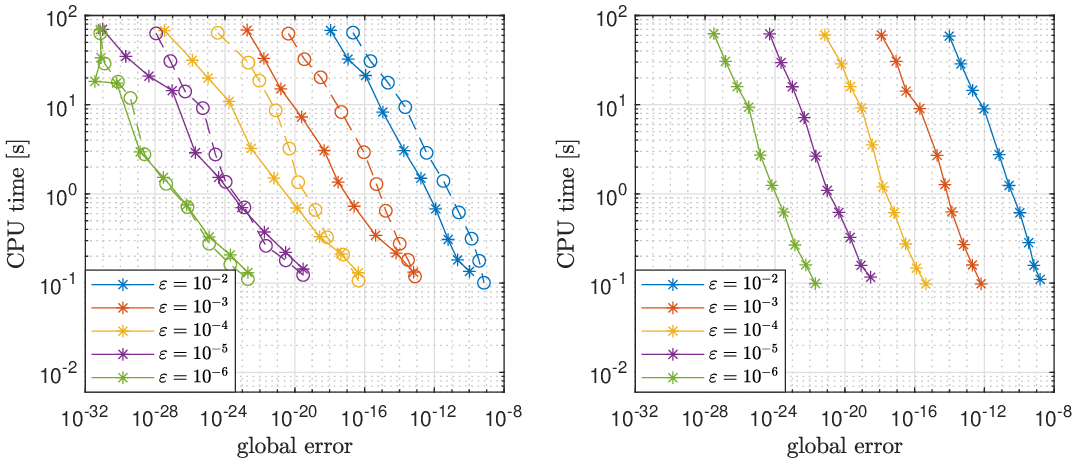}
	\caption{CPU times vs.\ global errors $\max_{n\leq N}\lVert U(x_{n})-U_{n}\rVert_{\infty}$ for the Airy equation (\ref{eqn:Airy}) on the spatial interval $[1, 2]$ as a function of the step size $h$, for several $\eps$-values. Left: WKB3 (solid lines with asterisks) and WKB3s (dashed lines with circles). Right: WKB2.}
	\label{plot:airy_times_vs_err_vpa}
\end{figure}
In the work-precision diagrams of Figure~\ref{plot:airy_times_vs_err_vpa} one can clearly observe that WKB3 and WKB3s are again more efficient than WKB2. E.g., with a CPU time of approximately $3$ seconds, the global errors for the different $\eps$-values lie between $10^{-29}$ and $10^{-13}$\,-\,$10^{-12}$ when using WKB3 and WKB3s, whereas they lie between $10^{-25}$ and $10^{-11}$ when using WKB2. Moreover, we observe also a big difference between WKB3 and WKB3s for $\eps=10^{-2},10^{-3},10^{-4},10^{-5}$: Indeed, for each of these $\eps$-values, and rather small global errors (incurred by small step sizes $h$), WKB3 needs significantly less CPU time than WKB3s. This is due to the $\mathcal{O}(h^{4})$ behavior of the error for WKB3 in these cases. Vice versa, for a fixed accuracy, the difference between the needed CPU times for WKB3, WKB3s, and WKB2 is very large. For instance, for $\eps=10^{-4}$ and a global error of approximately $10^{-21}$, the CPU times for WKB3, WKB3s, and WKB2 are approximately $1.5$ seconds, $8$ seconds, and $60$ seconds, respectively. Hence, for the same accuracy, WKB3 is faster up to a factor of $40$, when compared to WKB2.

Next, let us investigate numerically the estimates (\ref{global_error_U_perturbed}), (\ref{global_error_U_simp_numphase}). To this end, we compute in the following the phase (\ref{phase}) numerically (even though it is exactly computable for this example) and investigate results obtained with WKB3 and WKB3s. In Figure~\ref{plot:airy_err_vs_h_numphase} we show on the left the global error of WKB3 and WKB3s when using the composite Simpson rule to compute the approximate phase $\tilde{\phi}$. Note that this implies $\lVert \phi-\tilde{\phi}\rVert_{L^{\infty}(I)}=\mathcal{O}(h^{4})$. Indeed, for $\eps= 2^{-4},2^{-5},2^{-6}$ and $h\geq 10^{-2}$ the error curves for WKB3 and WKB3s behave like the first error term in (\ref{global_error_U_perturbed}) and (\ref{global_error_U_simp_numphase}), respectively, i.e.\ like $\mathcal{O}_{\eps,h}(h^{4}/\eps)$ due to Simpson's rule, as indicated by the lower dashed black line. For $h=1$ and $\eps\leq 2^{-3}$ we even observe an inversion of all error curves for WKB3 and WKB3s, i.e., the error increases with $\eps$. However, for small step sizes and large $\eps$ (e.g., $h\leq 10^{-1}$ and $\eps=2^{-3}$) the second error term in the estimates (\ref{global_error_U_perturbed}), (\ref{global_error_U_simp_numphase}) becomes dominant. Indeed, as indicated by the upper dashed black line, the error curves for both methods are clearly third order. Further, in this case the inversion of the error curves w.r.t.\ $\eps$ disappears. Moreover, for small values of $\eps$ and $h$ (e.g., $\eps\leq 2^{-4}$ and $h=10^{-3}$) the error curves reach a saturation level at approximately $10^{-14}$\,-\,$10^{-13}$, due to rounding errors.
\begin{figure}
	\centering
	\includegraphics[scale=0.75]{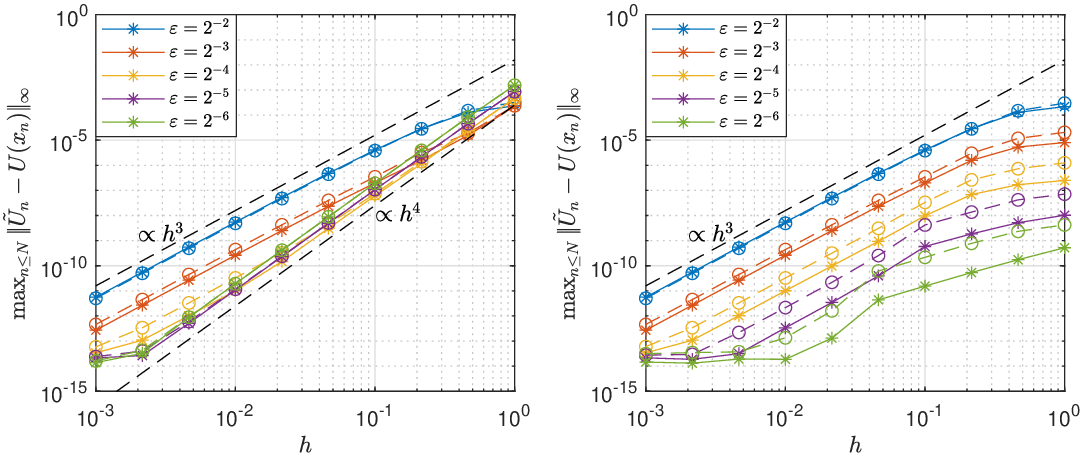}
	\caption{Global errors $\max_{n\leq N}\lVert U(x_{n})-\tilde{U}_{n}\rVert_{\infty}$ for the Airy equation (\ref{eqn:Airy}) on the spatial interval $[1, 2]$ as a function of the step size $h$, for several $\eps$-values. The solid lines with asterisks correspond to WKB3, and the dashed lines with circles correspond to WKB3s. Here, we used an approximate phase $\tilde{\phi}$ computed with two different methods: Left: Composite Simpson's rule. Right: Clenshaw-Curtis algorithm based on $N_{cheb}=17$ Chebyshev grid points, with barycentric interpolation.}
	\label{plot:airy_err_vs_h_numphase}
\end{figure}

For the right plot of Figure~\ref{plot:airy_err_vs_h_numphase} we computed the approximate phase $\tilde{\phi}$ with a spectral method, as already used in \cite{Arnold2022WKBmethodFT}. That is, we use the well-known Clenshaw-Curtis algorithm \cite{Clenshaw1960AMF} to approximate the phase (\ref{phase}) on a Chebyshev grid for the whole interval $[1,2]$ with $N_{cheb}=17$ points, and then use barycentric interpolation (e.g., see \cite{Berrut2004BarycentricLI}) to obtain the approximate phase $\tilde{\phi}$ at the uniform grid which is used for the WKB schemes. We note that, using this method, the phase is approximated to machine precision. Indeed, we observe in the right plot of Figure~\ref{plot:airy_err_vs_h_numphase} that the first error term in the estimates (\ref{global_error_U_perturbed}), (\ref{global_error_U_simp_numphase}) is numerically invisible, as the entire plot is almost indistinguishable from the left plot of Figure~\ref{plot:airy_err_vs_h}, where the exact phase was used.
\anton{
\subsection{Comparison with results from program {\tt riccati}}\label{sec:comparison}
Next we shall compare the performance of our two schemes from Section~\ref{chap:scheme} with the program {\tt riccati}, which is based on the Riccati defect correction method from \cite{Agocs2024}. We used its {\tt Python} implementation, version 1.1.3 \cite{Agocs2023}.\footnote{\anton{downloaded from {\tt https://github.com/fruzsinaagocs/riccati}}} While the WKB schemes from Section~\ref{chap:scheme} solve \eqref{schroedinger_IVP} on a user-defined grid, {\tt riccati} is a sophisticated adaptive solver that couples a Riccati phase function solution in oscillatory regions with a spectral method for smooth regions. {\tt riccati} first solves the ODE on a course grid that is adaptively chosen, depending on a user defined error tolerance. In a second phase called \emph{dense output} \cite{Agocs2020a}, the numerical solution can then be evaluated by interpolation on a desired grid. Given these conceptual differences between the two schemes, an ``objective'' efficiency comparison is delicate. For our tests we made the following set up choices: To be able to compare the errors on the same grid, we used the \emph{dense output} option of {\tt riccati} to evaluate its solution a posteriori on the same grid as used in the WKB method. Moreover, we use three different tolerances {\tt eps} for {\tt riccati} in the following tests. In contrast to Section~\ref{sec:Ex1}, here we present the error of the wave function $\varphi$, as this is the output of {\tt riccati} 1.1.3.}

\anton{As a first test we consider again the IVP \eqref{eqn:Airy} on the interval $[1,2]$. Thus, the work-precision diagram for WKB3 and WKB3s in Figure \ref{plot:airy_vs_riccati2} is an extension of the left plot in Figure \ref{plot:airy_times_vs_err}, but now we include four additional (small) values of $\varepsilon$. Like for Figure \ref{plot:airy_times_vs_err} the results for WKB3,  WKB3s are produced by prescribing different step sizes $h\in[10^{-3},1]$. As the two methods considered here are structurally very different, their efficiency results in Figure \ref{plot:airy_vs_riccati2} look very different. While {\tt riccati} always needed a CPU time of at least $10^{-3}$s, even for a low tolerance of $10^{-6}$ (not shown here), coarse discretizations within WKB3 often led to times between $10^{-4}$s and $10^{-3}$s. For {\tt riccati}, the CPU times appear to be quite independent of the output density in this example. For the WKB3 method, the error decays with $\varepsilon\to0$ initially, i.e.\ for (fixed) coarse meshes and hence small execution times. However, for (fixed) fine meshes (and hence small errors), this ordering is reversed in both WKB3 and {\tt riccati}: The error grows with $\varepsilon\to0$. This is apparently due to saturation in the roundoff errors and the bad conditioning of evaluating highly oscillatory functions, see the discussion in \S4 of \cite{Agocs2024}.\footnote{\anton{Since the \MATLAB implementation of the Airy function, appearing here as the exact reference solution, is quite inaccurate for large arguments, we replace it there by its series representation, see Remark 5.2 in \cite{Krner2021WKBbasedSW} for details.}}
To sum up, let us compare the results of WKB3 with those of {\tt riccati} with tolerance {\tt eps} set to $10^{-13}$: For $\varepsilon$ small, i.e.\ $\varepsilon=2^{-10}, 2^{-9}, 2^{-8}$, WKB3 can reach the same (and sometimes even higher) accuracy as {\tt riccati}, but a bit faster. Moreover, WKB3s tends to be slightly faster than WKB3. For $\varepsilon$ larger, {\tt riccati} is faster. 
\begin{figure}
	\centering
	\includegraphics[scale=0.75]{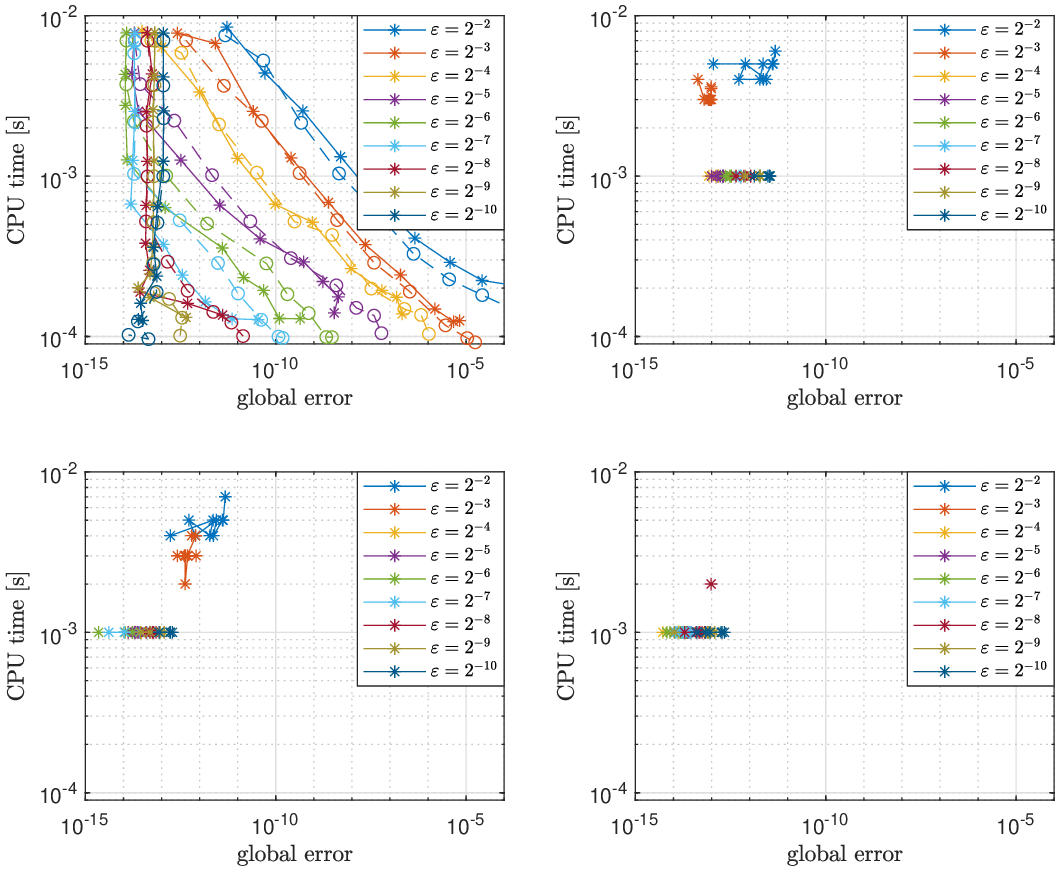}
	\caption{\anton{CPU times vs.\ global errors $\max_{n\leq N}\lVert \varphi(x_{n})-\varphi_{n}\rVert_{\infty}$ for the Airy equation (\ref{eqn:Airy}) on the spatial interval $[1, 2]$, for several $\eps$-values. Top left: WKB3 (solid lines with asterisks) and WKB3s (dashed lines with circles). Top right, bottom left and right: {\tt riccati} with $tol=10^{-11}$, $tol=10^{-12}$, $tol=10^{-13}$.}}
	\label{plot:airy_vs_riccati2}
\end{figure}
}

\anton{
As a second test we consider the IVP from \eqref{eqn:Airy} on the interval $[1,100]$. While the work-precision diagram for WKB3, WKB3s behaves qualitatively as for the interval $[1,2]$, see Figure \ref{plot:airy_vs_riccati100}, the dense output of {\tt riccati} now dominates the CPU time for fine grids. This explains the vertical lines in the second to forth panel of Figure \ref{plot:airy_vs_riccati100}. 
As before, let us compare the results of WKB3 with those of {\tt riccati} with tolerance {\tt eps} set to $10^{-13}$: For the smallest value of $\varepsilon$, i.e. $2^{-8}$, the accuracy obtained with WKB3 on a course grid is slightly better and the CPU time is slightly shorter than with {\tt riccati}. For $\varepsilon=2^{-7}$ the results of both methods are almost the same when using the second coarsest grid for WKB3. Again WKB3s tends to be a bit faster than WKB3, but often is a bit less accurate. For larger values of $\varepsilon$, {\tt riccati} is clearly faster than WKB3, while also yielding smaller errors}. 

\anton{To conclude, the efficiency advantages of the two compared methods depend very much on the concrete situation, but for larger values of $\varepsilon$, {\tt riccati} is clearly more efficient.}

\begin{figure}
	\centering
	\includegraphics[scale=0.75]{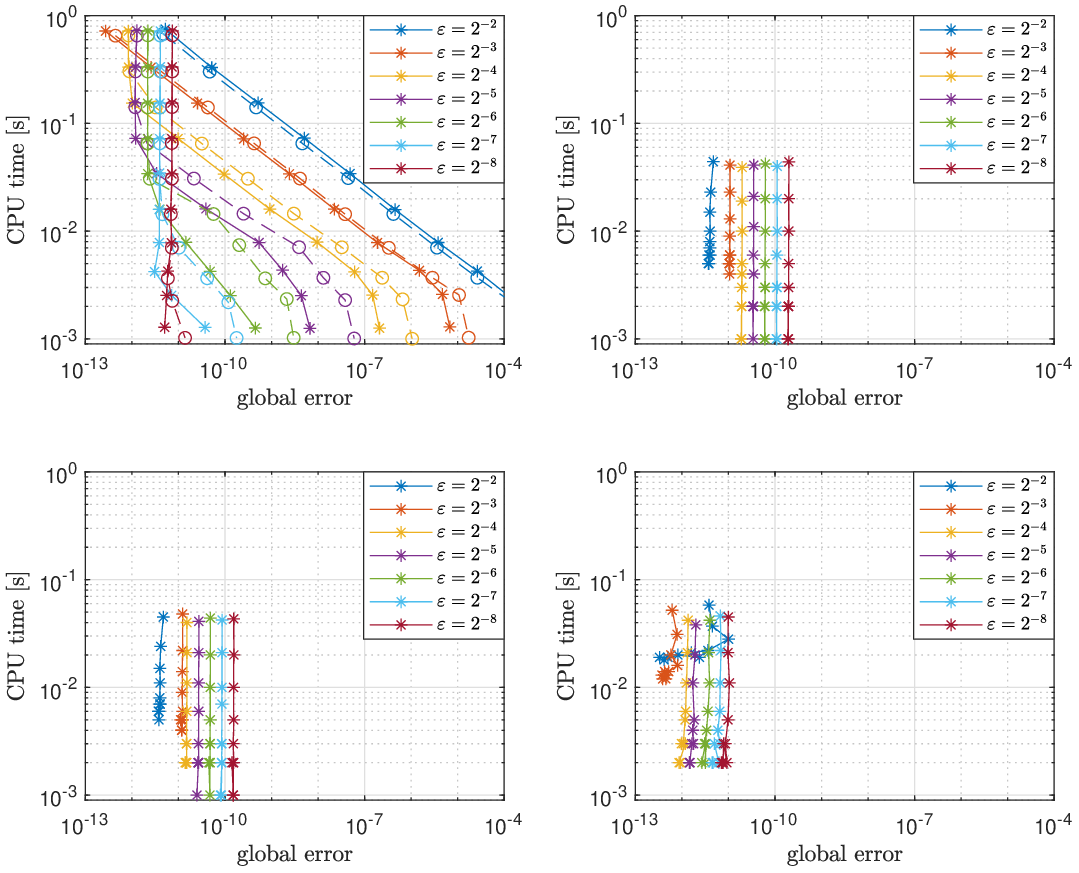}
	\caption{\anton{CPU times vs.\ global errors $\max_{n\leq N}\lVert \varphi(x_{n})-\varphi_{n}\rVert_{\infty}$ for the Airy equation (\ref{eqn:Airy}) on the spatial interval $[1, 100]$, for several $\eps$-values. Top left: WKB3 (solid lines with asterisks) and WKB3s (dashed lines with circles). Top right, bottom left and right: {\tt riccati} with $tol=10^{-11}$, $tol=10^{-12}$, $tol=10^{-13}$.}}
	\label{plot:airy_vs_riccati100}
\end{figure}
%
%
%

\subsection{Second example}
As a second example let us consider the following IVP:
\begin{align}\label{eqn:expx}
	\begin{cases}
		\varepsilon^{2}\varphi^{\prime\prime  }(x) +\ee^{x} \varphi(x) = 0 \Comma \quad x \in [0, 1] \Comma \\
		\varphi(0) = 1 \Comma \\
		\varepsilon\varphi^{\prime}(0) = 0 \period
	\end{cases}
\end{align}
Here, the exact solution is given by\footnote{We computed the exact solution by using the Symbolic Math Toolbox of \MATLAB.}
\begin{align}
    \varphi_{exact}(x)=    
    \frac{J_{0}\left(\frac{2}{\eps}\ee^{x/2}\right)Y_{1}\left(\frac{2\sqrt{\ee}}{\eps}\right) - Y_{0}\left(\frac{2}{\eps}\ee^{x/2}\right)J_{1}\left(\frac{2\sqrt{\ee}}{\eps}\right)}{Y_{1}\left(\frac{2\sqrt{\ee}}{\eps}\right)J_{0}\left(\frac{2}{\eps}\right)-J_{1}\left(\frac{2\sqrt{\ee}}{\eps}\right)Y_{0}\left(\frac{2}{\eps}\right)}\Comma
\end{align}
where $J_{\nu}$ and $Y_{\nu}$ denote the Bessel functions of first and second kind of order $\nu$, respectively (e.g., see \cite[Chap.\ 10]{Olver2010NISTHO}). Again the phase (\ref{phase}) is exactly computable here.

In Figure~\ref{plot:expx_glob_error_U} we plot again the global error $\max_{n\leq N}\lVert U(x_{n})-U_{n}\rVert_{\infty}$ as a function of the step size $h$, for several $\eps$-values. The left plot shows the results for WKB3 and WKB3s, and the right plot shows the results for WKB2. The dashed black line in the left plot confirms the third order estimates (\ref{global_error_U}), (\ref{global_error_U_simp}), as the error for WKB3 and WKB3s clearly decays like $\mathcal{O}(h^{3})$. For small values of $h$ and small values of $\eps$ (e.g.\ $h\leq 10^{-2}$ and $\eps=2^{-5}$) the error curves include rounding errors, due to the used double precision arithmetic. Moreover, when using the step size $h=1$, the error for WKB3 is $\mathcal{O}(\eps^{4})$, whereas the error for WKB3s is $\mathcal{O}(\eps^{3})$. This is again due to the $\mathcal{O}_{\eps,h}(\max(\eps,h))$-factor in estimate (\ref{global_error_U}), which for the shown $\eps$-values yields an $\mathcal{O}(\eps)$-factor.
\begin{figure}
	\centering
	\includegraphics[scale=0.75]{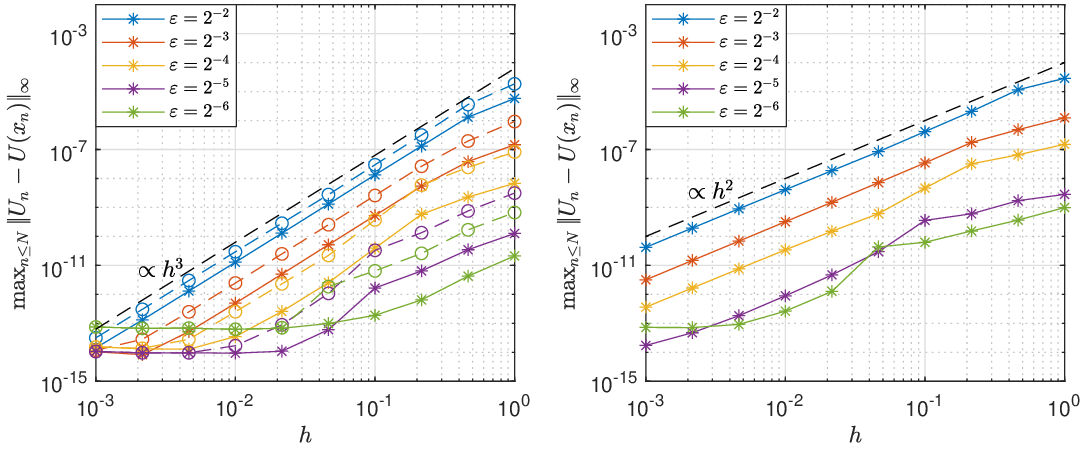}
	\caption{Global errors $\max_{n\leq N}\lVert U(x_{n})-U_{n}\rVert_{\infty}$ for the IVP (\ref{eqn:expx}) as functions of the step size $h$, for several $\eps$-values. Left: WKB3 (solid lines with asterisks) and WKB3s (dashed lines with circles). Right: WKB2.}
	\label{plot:expx_glob_error_U}
\end{figure}

Figure~\ref{plot:expx_glob_error_U_vs_time} shows work-precision diagrams, which correspond to the computations from Figure~\ref{plot:expx_glob_error_U}. We observe that, in the same amount of time, WKB3 and WKB3s both yield much smaller global errors than WKB2. E.g., with a CPU time of approximately $10^{-3}$\,-\,$1.5\cdot 10^{-3}$ seconds, the global errors for WKB3 and WKB3s are between $10^{-14}$ and $5\cdot 10^{-11}$ for the different $\eps$-values. In contrast, the errors for WKB2 lie between $10^{-13}$ and $5\cdot 10^{-9}$. Vice versa, in order to attain a fixed accuracy, WKB3 and WKB3s need significantly less CPU time, when compared to WKB2. This can be observed, e.g., for $\eps=2^{-2}$ and an accuracy of approximately $10^{-10}$: Then, WKB3 and WKB3s both need less than $10^{-3}$ seconds, whereas WKB2 needs approximately $5\cdot 10^{-2}$ seconds. This time difference is even larger for smaller values of $\eps$. We conclude from Figure~\ref{plot:expx_glob_error_U_vs_time} that WKB3 and WKB3s are more efficient than WKB2 (in the present example). Similar to the previous example, the difference between WKB3 and WKB3s is not so obvious here. Indeed, we observe on the left of Figure~\ref{plot:expx_glob_error_U_vs_time} that the blue curves for $\eps=2^{-2}$ are almost identical. But for $\eps=2^{-4}$ and a global error of approximately $3\cdot 10^{-12}$ the CPU time for WKB3 is $4\cdot 10^{-4}$, whereas the CPU time for WKB3s is $6\cdot 10^{-4}$. Overall, we conclude that the efficiency difference between WKB3 and WKB3s is small for this example. However, for a given accuracy and small $\eps$-values, WKB3 tends to be faster than WKB3s.
\begin{figure}
	\centering
	\includegraphics[scale=0.75]{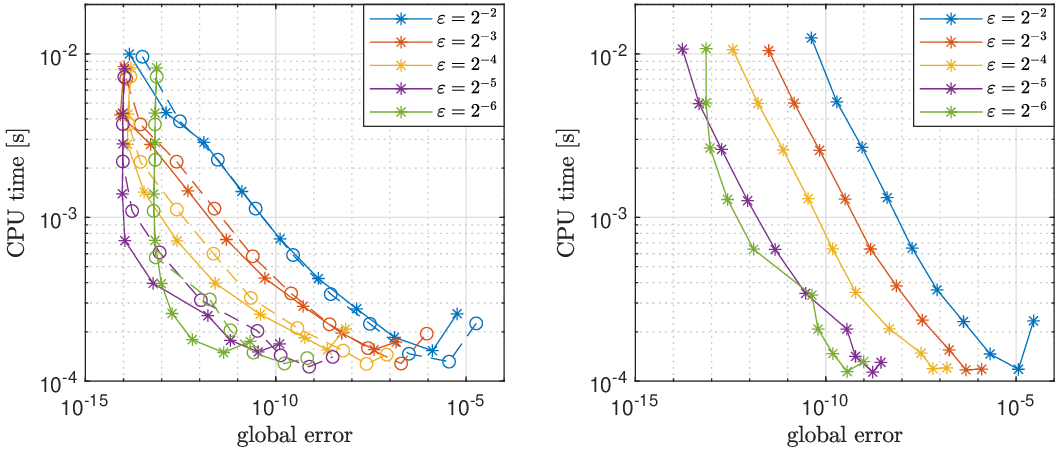}
	\caption{CPU times vs.\ global errors $\max_{n\leq N}\lVert U(x_{n})-U_{n}\rVert_{\infty}$ for the IVP (\ref{eqn:expx}), for several $\eps$-values. Left: WKB3 (solid lines with asterisks) and WKB3s (dashed lines with circles). Right: WKB2.}
	\label{plot:expx_glob_error_U_vs_time}
\end{figure}

Next, we compute for the schemes WKB3 and WKB3s the phase (\ref{phase}) numerically in two ways and investigate the estimates (\ref{global_error_U_perturbed}), (\ref{global_error_U_simp_numphase}). For this, we shall again apply the composite Simpson rule as well as the Clenshaw-Curtis algorithm along with barycentric interpolation. In Figure~\ref{plot:expx_glob_error_U_numphase} we show on the left the global error $\max_{n\leq N}\lVert U(x_{n})-\tilde{U}_{n}\rVert_{\infty}$ for WKB3 and WKB3s as a function of the step size when using Simpson's rule. We observe that for both methods the first error term in (\ref{global_error_U_perturbed}), (\ref{global_error_U_simp_numphase}), i.e. the $\mathcal{O}_{\eps,h}(h^{4}/\eps)$-term, dominates for $\eps\leq 2^{-3}$ and all used step sizes $h$. Indeed, this is indicated by the dashed black line which decays like $\mathcal{O}(h^{4})$. For $\eps=2^{-2}$, however, this error term merely dominates for step sizes $h\geq 10^{-1}$. Indeed, when using smaller step sizes, i.e. $h\leq 10^{-1}$, the error for $\eps=2^{-2}$ behaves like $\mathcal{O}(h^{3})$ for both methods. Note also that for $h=1$ we can observe again an inversion of the shown error curves w.r.t.\ $\eps$, when compared to the left plot of Figure~\ref{plot:expx_glob_error_U}.

On the right of Figure~\ref{plot:expx_glob_error_U_numphase} we show the errors for WKB3 and WKB3s when the phase is obtained with the spectral method. As indicated by the dashed black line, the first error term in (\ref{global_error_U_perturbed}), (\ref{global_error_U_simp_numphase}) is essentially eliminated. The error curves for both methods are clearly third order and the entire plot is very similar to the left plot of Figure~\ref{plot:expx_glob_error_U}.
\begin{figure}
	\centering
	\includegraphics[scale=0.75]{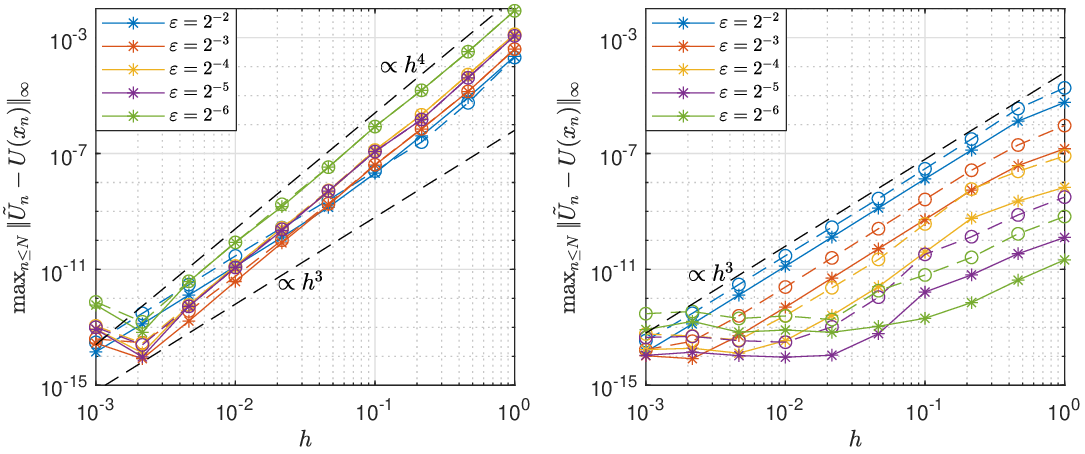}
	\caption{Global errors $\max_{n\leq N}\lVert U(x_{n})-\tilde{U}_{n}\rVert_{\infty}$ for the IVP (\ref{eqn:expx}) as a function of the step size $h$, for several $\eps$-values. The solid lines with asterisks correspond to WKB3, and the dashed lines with circles correspond to WKB3s. Here, we used an approximate phase $\tilde{\phi}$ computed with two different methods: Left: Composite Simpson's rule. Right: Clenshaw-Curtis algorithm based on $N_{cheb}=17$ Chebyshev grid points, with barycentric interpolation.}
	\label{plot:expx_glob_error_U_numphase}
\end{figure}
\section{Conclusion}\label{chap:conclusion}
In this paper we developed a third order one-step method to efficiently compute solutions to the highly oscillatory 1D Schrödinger equation (\ref{schroedinger}), \anton{and we derived theoretically sound error estimates for it.} The method is based on the WKB-transformation from \cite{Arnold2011WKBBasedSF}, which was already used there for the development of a first and second order scheme. Like those two \anton{previous} methods, the presented method has the property of asymptotical correctness w.r.t.\ the small parameter $\eps$ in case of an explicitly computable phase. Additionally, in scenarios where $\eps\ll h$ (with $h$ being the grid size), the error decays \anton{even like $\mathcal O(h^4)$.}

\anton{In efficiency comparisons with the program {\tt riccati} \cite{Agocs2024, Agocs2023} we found that the WKB3-marching method has slight advantages w.r.t.\ accuracy and/or runtime for very small values of $\varepsilon$, while {\tt riccati} is clearly more efficient for larger $\varepsilon$.}

\anton{In this paper, the step size $h$ in WKB3 was treated as an input parameter. The extension to an adaptive step size control, as in \cite{Krner2021WKBbasedSW, Agocs2020}, and dense output (e.g. based on linear interpolation of the $Z$-function from \eqref{Zsystem} or interpolation/extrapolation of the $\varphi(x_n)$ with the WKB-function $\varphi_2^{WKB}$ from \eqref{2ndorderwkb}) will be left for future work. }

\section*{Declaration of competing interest}
The authors declare that they have no known competing financial interests or personal relationships that could have appeared to influence the work reported in this paper.
\section*{Acknowledgements}
Both authors acknowledge support from the Austrian Science Fund (FWF) project \href{https://doi.org/10.55776/F65}{10.55776/F65}, and the author A. Arnold also by the bi-national FWF-project I3538-N32. We are grateful to the numerous suggestions by the anonymous referees that helped to improve this manuscript.

\bibliographystyle{abbrv}
\bibliography{references}

\begin{thebibliography}{10}

\bibitem{Advanpix}
{Advanpix LLC}.
\newblock {\em {Multiprecision} {Computing} {Toolbox} for {\MATLAB} version
  5.1.0.15432}.
\newblock Advanpix LLC, 2023.

\bibitem{Agocs2023}
F.~J. Agocs and A.~H. Barnett.
\newblock {\tt riccati}: an adaptive, spectral solver for oscillatory {ODE}s.
\newblock {\em J. Open Source Software}, 5:5430, 2023.

\bibitem{Agocs2024}
F.~J. Agocs and A.~H. Barnett.
\newblock An adaptive spectral method for oscillatory second-order linear
  {ODEs} with frequency-independent cost.
\newblock {\em SIAM J. Numer. Anal.}, 62(1):295--321, 2024.

\bibitem{Agocs2020}
F.~J. Agocs, W.~J. Handley, A.~N. Lasenby, and M.~P. Hobson.
\newblock Efficient method for solving highly oscillatory ordinary differential
  equations with applications to physical systems.
\newblock {\em Phys. Rev. Res.}, 2, 2020.
\newblock Article number 013030.

\bibitem{Agocs2020a}
F.~J. Agocs, M.~P. Hobson, W.~J. Handley, and A.~N. Lasenby.
\newblock Dense output for highly oscillatory numerical solutions.
\newblock {\em preprint, arXiv:2007.05013}, 2020.

\bibitem{Arnold2011WKBBasedSF}
A.~Arnold, N.~B. Abdallah, and C.~Negulescu.
\newblock {WKB-Based} {Schemes} for the {Oscillatory} {1D} {Schr{\"o}dinger}
  {Equation} in the {Semiclassical} {Limit}.
\newblock {\em SIAM J. Numer. Anal.}, 49:1436--1460, 2011.

\bibitem{arnold2024optimally}
A.~Arnold, C.~Klein, J.~Körner, and J.~M. Melenk.
\newblock Optimally truncated {WKB} approximation for the {1D} stationary
  {Schr\"odinger} equation in the highly oscillatory regime.
\newblock {\em J. Computational and Appl. Math.}, 457, 2025.
\newblock Article Number 116240.

\bibitem{Arnold2022WKBmethodFT}
A.~Arnold, C.~Klein, and B.~Ujvari.
\newblock {WKB-method} for the {1D} {Schr{\"o}dinger} equation in the
  semi-classical limit: enhanced phase treatment.
\newblock {\em BIT Numerical Mathematics}, 62:1--22, 2022.

\bibitem{Berrut2004BarycentricLI}
J.-P. Berrut and L.~N. Trefethen.
\newblock Barycentric lagrange interpolation.
\newblock {\em SIAM Rev.}, 46:501--517, 2004.

\bibitem{Bremer2018}
J.~Bremer.
\newblock On the numerical solution of second order ordinary differential
  equations in the high-frequency regime.
\newblock {\em Applied and Computational Harmonic Analysis}, 44:312--349, 2018.

\bibitem{Bremer2023}
J.~Bremer.
\newblock Phase function methods for second order linear ordinary diﬀerential
  equations with turning points.
\newblock {\em Applied and Computational Harmonic Analysis}, 54:137--169, 2023.

\bibitem{Clenshaw1960AMF}
C.~W. Clenshaw and A.~R. Curtis.
\newblock A method for numerical integration on an automatic computer.
\newblock {\em Numerische Mathematik}, 2:197--205, 1960.

\bibitem{Courant1958TheoryOT}
E.~D. Courant and H.~S. Snyder.
\newblock Theory of the alternating-gradient synchrotron.
\newblock {\em Annals of Physics}, 281:360--408, 1958.

\bibitem{IHLENBURG19959}
F.~Ihlenburg and I.~Babuška.
\newblock Finite element solution of the {Helmholtz} equation with high wave
  number {Part I}: {The} $h$-version of the {FEM}.
\newblock {\em Computers and Mathematics with Applications}, 30(9):9--37, 1995.

\bibitem{Iserles2006HighlyOscill}
A.~Iserles, S.~P. N{\o}rsett, and S.~Olver.
\newblock Highly oscillatory quadrature: the story so far.
\newblock {\em Proceedings of ENUMATH 2005, Santiago de Compostela}, pages
  97--118, 2006.

\bibitem{Jahnke2003NumericalIF}
T.~Jahnke and C.~Lubich.
\newblock Numerical integrators for quantum dynamics close to the adiabatic
  limit.
\newblock {\em Numerische Mathematik}, 94:289--314, 2003.

\bibitem{Krner2021WKBbasedSW}
J.~K{\"o}rner, A.~Arnold, and K.~D{\"o}pfner.
\newblock {WKB}-based scheme with adaptive step size control for the
  {Schr{\"o}dinger} equation in the highly oscillatory regime.
\newblock {\em J. Comput. Appl. Math.}, 404:113905, 2021.

\bibitem{Landau1985Q}
L.~D. Landau and E.~M. Lifschitz.
\newblock {\em Quantenmechanik}.
\newblock Akademie-Verlag, 1985.

\bibitem{Lewis1968MOTIONOA}
H.~R. Lewis.
\newblock Motion of a {Time-Dependent} {Harmonic} {Oscillator}, and of a
  {Charged} {Particle} in a {Class} of {Time-Dependent}, {Axially} {Symmetric}
  {Electromagnetic} {Fields}.
\newblock {\em Phys. Rev.}, 172:1313--1315, 1968.

\bibitem{Lorenz2005AdiabaticIF}
K.~Lorenz, T.~Jahnke, and C.~Lubich.
\newblock Adiabatic {Integrators} for {Highly} {Oscillatory} {Second-Order}
  {Linear} {Differential} {Equations} with {Time-Varying} {Eigendecomposition}.
\newblock {\em BIT Numerical Mathematics}, 45:91--115, 2005.

\bibitem{Martin2003WKBAF}
J.~Martin and D.~J. Schwarz.
\newblock {WKB} approximation for inflationary cosmological perturbations.
\newblock {\em Physical Review D}, 67:083512, 2003.

\bibitem{Mennemann2013TransientSS}
J.-F. Mennemann, A.~J{\"u}ngel, and H.~Kosina.
\newblock Transient {Schr{\"o}dinger-Poisson} simulations of a high-frequency
  resonant tunneling diode oscillator.
\newblock {\em J. Comput. Phys.}, 239:187--205, 2013.

\bibitem{Neg2008}
C.~Negulescu.
\newblock Numerical analysis of a multiscale finite element scheme for the
  resolution of the stationary {S}chr\"odinger equation.
\newblock {\em Numerische Mathematik}, 108(4):625--652, 2008.

\bibitem{Olver2010NISTHO}
F.~W.~J. Olver, D.~W. Lozier, R.~F. Boisvert, and C.~W. Clark.
\newblock N{IST} {Handbook} of {Mathematical} {Functions}.
\newblock Cambridge University Press, 2010.

\bibitem{Olver-GMRES2011}
S.~Olver.
\newblock {GMRES} for {O}scillatory {M}atrix-{V}alued {D}ifferential
  {E}quations.
\newblock {\em Lecture Notes in Computational Science and Engineering},
  76:267--274, 2011.

\bibitem{SeBr2024}
K.~Serkh and J.~Bremer.
\newblock Phase {F}unction {M}ethods for {S}econd {O}rder {I}nhomogeneous
  {L}inear {O}rdinary {D}ifferential {E}quations.
\newblock {\em Journal of Scientific Computing}, 98, 2024.
\newblock Article Number 14.

\bibitem{Sun1998ResonantTD}
J.~Sun, G.~I. Haddad, P.~Mazumder, and J.~N. Schulman.
\newblock Resonant tunneling diodes: models and properties.
\newblock {\em Proc. IEEE}, 86:641--660, 1998.

\bibitem{Winitzki2005CosmologicalPP}
S.~Winitzki.
\newblock Cosmological particle production and the precision of the {WKB}
  approximation.
\newblock {\em Physical Review D}, 72:104011, 2005.

\end{thebibliography}

\end{document}